\def\R{\mathbb{R}}
\def\N{\mathbb{N}}
\def\Z{\mathbb{Z}}
\def\Co{\mathbb{C}}
\def\H{\mathbb{H}}
\def\A{\mathcal{A}}
\def\C{\mathcal{C}}
\def\D{\mathcal{D}}
\def\L{\mathcal{L}}
\newcommandx{\emanuel}[2][1=]{\todo[linecolor=green,backgroundcolor=green!25,bordercolor=black,#1]{#2}}
\newcommandx{\diogo}[2][1=]{\todo[linecolor=orange,backgroundcolor=orange!25,bordercolor=orange,#1]{#2}}
\newcommandx{\mateus}[2][1=]{\todo[linecolor=blue,backgroundcolor=blue!25,bordercolor=blue,#1]{#2}}
\newcommandx{\danger}[2][1=]{\todo[linecolor=red,backgroundcolor=red!25,bordercolor=blue,#1]{#2}}
\renewcommand{\d}{\text{\rm d}}
\newcommand{\one}{\mathbbm 1}
\newcommand{\eps}{\varepsilon}
\newcommand{\pvector}[1]{
  \begin{pmatrix}
    #1
  \end{pmatrix}} %
\newcommand{\ddirac}[1]{
  \,\boldsymbol{\delta}\!\pvector{#1}\!} %
\newtheorem{theorem}{Theorem}
\newtheorem{corollary}[theorem]{Corollary}
\newtheorem{proposition}[theorem]{Proposition}
\newtheorem{lemma}[theorem]{Lemma}
\numberwithin{equation}{section}
\title{Extremizers for Fourier restriction on hyperboloids}
\author[Carneiro]{Emanuel Carneiro}
\author[Oliveira e Silva]{Diogo Oliveira e Silva}
\author[Sousa]{Mateus Sousa}
\address{
IMPA - Instituto de Matem\'{a}tica Pura e Aplicada\\
Rio de Janeiro - RJ, Brazil, 22460-320.}
\email{carneiro@impa.br}
\email{mateuscs@impa.br}
\address{
        Hausdorff Center for Mathematics\\
        53115 Bonn, Germany}
\email{dosilva@math.uni-bonn.de}
\date{\today}                                           
\begin{document}

\subjclass[2010]{42B10}
\keywords{Sharp Fourier restriction theory, extremizers, optimal constants, convolution of singular measures, concentration-compactness, Strichartz inequalities, Klein--Gordon equation, hyperboloid.}
\begin{abstract} The $L^2 \to L^p$ adjoint Fourier restriction inequality on the $d$-dimensional hyperboloid $\H^d \subset \R^{d+1}$ holds provided $6 \leq p < \infty$, if $d=1$, and $2(d+2)/d \leq p\leq  2(d+1)/(d-1)$, if $d\geq2$. Quilodr\'{a}n \cite{Qu15} recently found the values of the optimal constants in the endpoint cases $(d,p)\in\{(2,4),(2,6),(3,4)\}$ and showed that the inequality does not have extremizers in these cases. In this paper we answer two questions posed in \cite{Qu15}, namely: (i) we find the explicit value of the optimal constant in the endpoint case $(d,p) = (1,6)$ (the remaining endpoint for which $p$ is an even integer) and show that there are no extremizers in this case; and (ii) we establish the existence of extremizers in all non-endpoint cases in dimensions $d \in \{1,2\}$. This completes the qualitative description of this problem in low dimensions.
\end{abstract}

\maketitle

\section{Introduction}
The connection between Fourier restriction estimates on smooth hypersurfaces and  Strichartz estimates for linear partial differential equations has been understood for a while.
For instance, Strichartz inequalities for the Schr\"odinger and wave equations correspond to Fourier restriction estimates on the paraboloid and the cone, respectively. 
These are not compact manifolds, but satisfy a scaling symmetry which makes the usual Tomas--Stein argument work.  While the hyperboloid does not possess such a scaling symmetry, it is in some sense well-approximated by the paraboloid and the cone (see Figure \ref{fig:ConeHypParab}) and it serves as an interesting intermediate case where new phenomena emerge.
In this paper, we explore some of these phenomena in the context of sharp Fourier restriction theory.

\begin{figure}
  \centering
  \includegraphics[height=5cm]{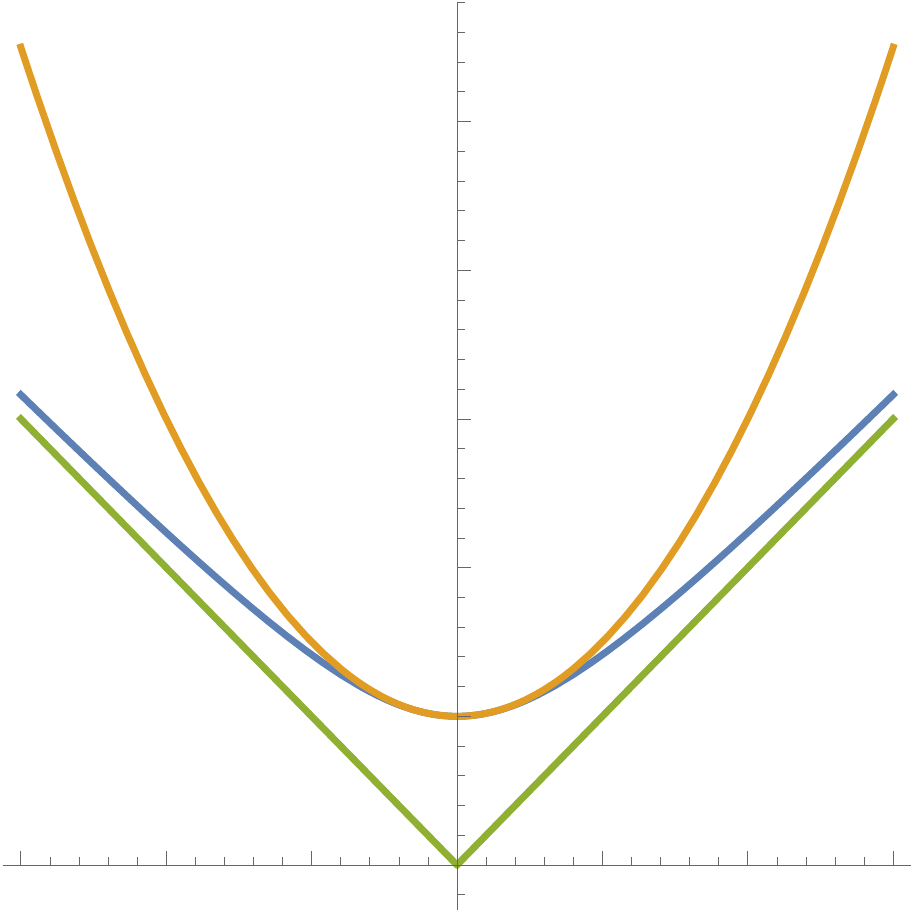} 
    \caption{The paraboloid $y=1+\frac{|x|^2}2$ osculates the hyperboloid $y=\sqrt{1+|x|^2}$ at its vertex. The cone $y=|x|$ approximates the same hyperboloid at infinity.}
\label{fig:ConeHypParab}
\end{figure}

Sharp adjoint Fourier restriction inequalities on the hyperboloid were first studied by Quilodr\'an \cite{Qu15}, who further developed methods from earlier seminal work of Foschi \cite{Fo07} in the context of paraboloids and cones. These works serve as motivation for much of the present paper, and we try to follow the notation and terminology of \cite{Qu15} to facilitate the references.
The hyperboloid $\H^d \subset \R^{d+1}$ is defined by\footnote{A simple rescaling argument transfers all the results of this paper to the hyperboloid $\H^d_s=\big\{(y,y')\in\R^d\times\R: y'=\sqrt{s^2+|y|^2}\big\}$.}
\begin{equation*}
\H^d=\big\{(y,y')\in\R^d\times\R: y'=\sqrt{1+|y|^2}\big\},
\end{equation*}
and comes equipped with the Lorentz invariant measure
\begin{equation}\label{defsigma}
\d\sigma(y,y')=\ddirac{y'-\sqrt{1+|y|^2}}\frac{\d y\,\d y'}{\sqrt{1+|y|^2}},
\end{equation}
which is defined by duality on an appropriate dense class via
$$\int_{\H^d}\varphi(y,y')\,\d\sigma(y,y')=\int_{\R^d} \varphi(y,\sqrt{1+|y|^2})\frac{\d y}{\sqrt{1+|y|^2}}.$$
We normalize the Fourier transform in $\R^{d+1}$ in the following way:
\begin{equation}\label{NormalizeFT}
\widehat{g}(\zeta)=\int_{\R^{d+1}} e^{-iz\cdot\zeta}\, g(z)\,\d z.
\end{equation}
With this normalization, the convolution and the $L^2(\R^{d+1})$-norm satisfy
$$\widehat{g\ast h}=\widehat{g}\cdot\widehat{h}\,; \ \  \text{ and } 
\ \ \|\widehat{g}\|_{L^2(\R^{d+1})}=(2\pi)^{\frac {d+1}2}\|g\|_{L^2(\R^{d+1})}.$$
The {\it Fourier restriction operator} on the  hyperboloid $\H^d$ maps a function $g$ on the ambient space $\R^{d+1}$ to the restriction of its Fourier transform to $\H^d$.
The {\it Fourier extension operator} on $\H^d$ is the adjoint of the Fourier restriction operator, and is given by
$$Tf(x,t)=\int_{\R^d} e^{ix\cdot y} e^{it\sqrt{1+|y|^2}} f(y)\frac{\d y}{\sqrt{1+|y|^2}},$$
where $(x,t)\in\R^d\times\R$ and $f$ belongs to the Schwartz class in $\R^d$.
Here we are identifying a function $f:\H^d\to\Co$ with a complex-valued function on $\R^d$. Its norm in $L^p(\H^d) = L^p(\H^d, \sigma)$ is
$$\|f\|_{L^p(\H^d)}=\left(\int_{\R^d} |f(y)|^p \frac{\d y}{\sqrt{1+|y|^2}}\right)^{\frac 1p}.$$
With the normalization \eqref{NormalizeFT} observe that
\begin{equation}\label{TintermsofHat}
Tf(x,t)=\widehat{f\sigma}(-x,-t).
\end{equation}

The classical work of Strichartz \cite{St77} establishes that
\begin{equation}\label{ExtensionInequality}
\|Tf\|_{L^p(\R^{d+1})}\leq {\bf H}_{d,p} \, \|f\|_{L^2(\H^d)}\,,
\end{equation}
with a finite constant ${\bf H}_{d,p}$ (independent of $f$), provided that
  \begin{equation}\label{AdmissibleRange}
   \begin{cases}
    6 \leq p< \infty, \text{ if } d=1;\\
  \frac{2(d+2)}{d} \leq p\leq  \frac{2(d+1)}{d-1}, \text{ if } d\geq 2.
      \end{cases} 
\end{equation}
For a fixed dimension $d\geq 1$, the lower and upper bounds in the admissible range of exponents $p$ given by \eqref{AdmissibleRange} correspond to the unique exponents for which the extension operator is bounded on the paraboloid and the cone, respectively, each equipped with the appropriate measure (projection measure on the paraboloid and Lorentz invariant measure on the cone).

In this paper we investigate sharp instances of the extension inequality on the hyperboloid.
More precisely, given a pair $(d,p)$ in the admissible range \eqref{AdmissibleRange}, we study {\it extremizers} and {\it extremizing sequences} for inequality \eqref{ExtensionInequality}, and are interested in the value of the {\it optimal constant}
$${\bf H}_{d,p}:=\sup_{0\neq f\in L^2(\H^d)} \frac{\|Tf\|_{L^p(\R^{d+1})}}{\|f\|_{L^2(\H^d)}}.$$
Quilodr\'an \cite{Qu15} studied the endpoint cases $(d,p)\in\{(2,4),(2,6),(3,4)\}$. 
More precisely, he computed the values 
$${\bf H}_{2,4}=2^{\frac 34}\pi, \;{\bf H}_{2,6}=(2\pi)^{\frac 56}, \text{ and } {\bf H}_{3,4}=(2\pi)^{\frac 54},$$
and established the nonexistence of extremizers for the inequality \eqref{ExtensionInequality} associated to these three cases, which are the only ones for which $d>1$ and $p$ is an even integer. The arguments in \cite{Qu15} rely on explicit computations of the $n$-fold convolution of the measure $\sigma$ with itself, and these are computationally challenging if $n\geq 3$ and $d \neq 2$.

Here we answer two questions raised by Quilodr\'an \cite[p. 39]{Qu15}, regarding: (i) the value of the sharp constant and existence of extremizers in the endpoint case $(d,p) = (1,6)$; (ii) the existence of extremizers in the non-endpoint cases in dimensions $d \in \{1, 2\}$. Our results below, together with the previous results of Quilodr\'{a}n \cite{Qu15}, provide a complete qualitative description of this problem in low dimensions.
\begin{theorem}\label{Thm1}
The value of the optimal constant in the case $(d,p)=(1,6)$ is 
$${\bf H}_{1,6}=3^{-\frac1{12}}(2\pi)^{\frac12}.$$ 
Moreover, extremizers for inequality \eqref{ExtensionInequality} do not exist in this case.
\end{theorem}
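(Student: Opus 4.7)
The plan is to exploit that $p=6$ is an even integer and $d+1=2$ to reformulate the sharp constant as an $L^\infty$-bound on a three-fold convolution, in the Foschi--Quilodr\'an manner. Plancherel applied to $|Tf|^6=|\widehat{f\sigma\ast f\sigma\ast f\sigma}|^2$ (using the normalisation \eqref{NormalizeFT}) yields
\begin{equation*}
\|Tf\|_{L^6(\R^2)}^6=(2\pi)^2\,\|f\sigma\ast f\sigma\ast f\sigma\|_{L^2(\R^2)}^2,
\end{equation*}
and pointwise Cauchy--Schwarz $|f\sigma\ast f\sigma\ast f\sigma(\xi)|^2\le (\sigma\ast\sigma\ast\sigma)(\xi)\cdot(|f|^2\sigma\ast|f|^2\sigma\ast|f|^2\sigma)(\xi)$, followed by integration in $\xi$, gives
\begin{equation*}
{\bf H}_{1,6}^{\,6}\le (2\pi)^2\,\|\sigma\ast\sigma\ast\sigma\|_{L^\infty(\R^2)}.
\end{equation*}
Comparing with the claimed constant, the theorem reduces to showing $\|\sigma\ast\sigma\ast\sigma\|_{L^\infty}=2\pi/\sqrt 3$, matching that bound with an approximating sequence, and ruling out genuine extremizers.

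The main analytic step is to compute $\|\sigma\ast\sigma\ast\sigma\|_{L^\infty}$. By the Lorentz invariance of $\sigma$, the density $(\sigma\ast\sigma\ast\sigma)(\xi,\tau)$ depends only on the invariant mass $s:=\sqrt{\tau^2-\xi^2}\ge 3$; parametrising $\H^1$ by $t\mapsto(\sinh t,\cosh t)$ and iterating the convolution (the intermediate $(\sigma\ast\sigma)(\xi,\tau)=4/(m\sqrt{m^2-4})\mathbf 1_{\{m>2\}}$ with $m=\sqrt{\tau^2-\xi^2}$), I obtain
\begin{equation*}
(\sigma\ast\sigma\ast\sigma)(0,s)=8\int_4^{(s-1)^2}\frac{\d v}{\sqrt{v(v-4)\bigl((s-1)^2-v\bigr)\bigl((s+1)^2-v\bigr)}},\qquad s>3,
\end{equation*}
a complete elliptic integral of the first kind in $s$. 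In the degenerate limit $s\to 3^+$ two branch points of the integrand collide and the integral reduces to an elementary Euler integral of value $2\pi/\sqrt 3$. The main obstacle will be to upgrade this endpoint value into the uniform bound $(\sigma\ast\sigma\ast\sigma)(0,s)\le 2\pi/\sqrt 3$ on the whole of $(3,\infty)$: the integrand has no elementary antiderivative, and the supremum is only attained in a boundary limit rather than at an interior critical point. I expect to handle this either by an explicit monotonicity argument after the symmetrising substitution $v=4+\bigl((s-1)^2-4\bigr)\sin^2\phi$ (a local Taylor expansion of which already gives the strict decrease $(\sigma\ast\sigma\ast\sigma)(0,s)=\frac{2\pi}{\sqrt 3}-\frac{\pi}{\sqrt 3}(s-3)+O((s-3)^2)$), or by a direct comparison with the scale-invariant triple convolution $\mu\ast\mu\ast\mu\equiv 2\pi/\sqrt 3$ on the osculating paraboloid $\mathcal P=\{(y,1+y^2/2)\}$.

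The matching lower bound comes from concentration at the vertex $(0,1)\in\H^1$. For a fixed Gaussian $g$, the rescalings $f_\eps(y):=\eps^{-1/2}g(y/\eps)$, together with the change of variables $(x,t)\mapsto(x/\eps,t/\eps^2)$ in the oscillatory integral for $Tf_\eps$ and the Taylor expansion $\sqrt{1+\eps^2 z^2}=1+\frac{1}{2}\eps^2 z^2+O(\eps^4)$, convert $Tf_\eps$ into the paraboloid extension of $g$ in the limit $\eps\to 0^+$. Foschi's sharp $1$d Strichartz inequality identifies the paraboloid extension constant as exactly $(2\pi)^{1/2}3^{-1/12}$, attained by Gaussians, so $\|Tf_\eps\|_{L^6(\R^2)}/\|f_\eps\|_{L^2(\H^1)}\to (2\pi)^{1/2}3^{-1/12}$, matching the upper bound and identifying ${\bf H}_{1,6}$. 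Finally, nonexistence follows from chasing equality in Cauchy--Schwarz: an extremizer $f\in L^2(\H^1)\setminus\{0\}$ would force $\sigma\ast\sigma\ast\sigma=\|\sigma\ast\sigma\ast\sigma\|_{L^\infty}$ a.e.\ on the support of $|f|^2\sigma\ast|f|^2\sigma\ast|f|^2\sigma$, but that support contains a non-empty open subset of $\{s>3\}$, while $\sigma\ast\sigma\ast\sigma$ is real-analytic and strictly below $2\pi/\sqrt 3$ throughout $\{s>3\}$, giving the desired contradiction.
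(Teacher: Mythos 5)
Your overall structure mirrors the paper's: reduce $\mathbf{H}_{1,6}$ to $\|\sigma^{(\ast 3)}\|_{L^\infty}$ via Plancherel and Cauchy--Schwarz, compute the triple convolution as a one-dimensional integral depending only on the invariant mass $s=\sqrt{\tau^2-\xi^2}$, identify the boundary value $2\pi/\sqrt 3$, match it from below by concentration at the vertex, and conclude non-existence from the strict inequality inside the support. Your integral formula is equivalent (under $v=x^2$) to the paper's \eqref{FormulaSigma3}, and your equality-case argument is exactly the content of Lemma \ref{QuilodranLemmaBestConstant}.

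However, the proposal has a genuine gap at the central analytic step. You need the \emph{global} strict bound $\sigma^{(\ast 3)}(0,s)<2\pi/\sqrt 3$ for all $s>3$, and you explicitly leave this open (``I expect to handle this either by\ldots or by\ldots''). The Taylor expansion $\frac{2\pi}{\sqrt 3}-\frac{\pi}{\sqrt 3}(s-3)+O((s-3)^2)$ is only local near $s=3^+$ and says nothing for moderate or large $s$; the paraboloid-comparison heuristic likewise is not an argument. The paper closes this precisely with a crude but pointwise majorisation of the integrand in \eqref{FormulaSigma3} that produces the elementary upper bound $U(\tau)=16\pi/\big(\sqrt{(\tau+1)^2-(\tau-1)^2}\,\sqrt{\tau+1}\,\sqrt{4}\big)$, which one checks satisfies $U(3)=2\pi/\sqrt 3$ and $U'(\tau)<0$ for $\tau>3$; no elliptic-integral analysis is needed. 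Your non-existence step also quietly invokes real-analyticity of $\sigma^{(\ast 3)}$ on $\{s>3\}$, which is plausible but unproven and in fact unnecessary once the strict inequality is in hand. Finally, for the matching lower bound your route (rescaled Gaussians converging to Foschi's sharp $1$d paraboloid constant) is a legitimate alternative in principle, but the convergence $Tf_\eps \to$ (paraboloid extension) in $L^6(\R^2)$ is only sketched and would require justification (dominated convergence on the extension side, control of the error from $\sqrt{1+\eps^2z^2}-1-\tfrac12\eps^2z^2$, and accounting for the measure $\d y/\langle y\rangle$ vs.\ $\d y$); the paper instead runs a self-contained, explicit computation with $f_a(y)=e^{-a\langle y\rangle}$, for which $f_a\sigma\ast f_a\sigma\ast f_a\sigma = e^{-a\tau}\sigma^{(\ast 3)}$ exactly and the ratio reduces to an approximate-identity limit involving the Bessel function $K_0$, avoiding any appeal to Foschi's theorem.
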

From Plancherel's Theorem it follows that
$$\|Tf\|_{L^6(\R^2)}^3
=\|(\widehat{f\sigma})^3\|_{L^2(\R^2)}
=\|(f\sigma\ast f\sigma\ast f\sigma)^{\widehat{}}\;\|_{L^2(\R^2)}
=2\pi\|f\sigma\ast f\sigma\ast f\sigma\|_{L^2(\R^2)},$$
which in particular implies that
\begin{equation}\label{BestConstant1D}
{\bf H}^3_{1,6}=2\pi\sup_{0\neq f\in L^2(\H^1)}\frac{\|f\sigma\ast f\sigma\ast f\sigma\|_{L^2(\R^2)}}{\|f\|_{L^2(\H^1)}^3}.
\end{equation}
We are thus led to studying convolution measure $\sigma\ast\sigma\ast\sigma$, a task which we will undertake in greater generality in \S \ref{sec:Convolutions} below. The rigidity of the endpoint lies at the heart of the mechanism responsible for the lack of compactness in these situations (with $p$ even). It would be interesting to investigate if, in all the other endpoint cases $(d,p)$ (now with $p$ not an even integer), one still has lack of extremizers for \eqref{ExtensionInequality}. 

On the other hand, recent works of Fanelli, Vega and Visciglia \cite{FVV11, FVV12} indicate that concentration-compactness arguments may ensure the existence of extremizers in non-endpoint cases for certain families of restriction/extension estimates.  It is important to remark that the problem considered here does not fall under the scope of the methods of \cite{FVV11, FVV12}, since the hyperboloid is a non-compact surface which lacks dilation homogeneity (although many ideas from  \cite{FVV11, FVV12} shall be useful). Our next result establishes that extremizers do exist in {\it every non-endpoint case} of the one- and two-dimensional settings.
\begin{theorem}\label{Thm2}
Extremizers for inequality \eqref{ExtensionInequality} do exist in the following cases:
\begin{itemize}
\item[(a)] $d=1$ and $6<p<\infty$.
\item[(b)] $d=2$ and $4<p<6$.
\end{itemize}
\end{theorem}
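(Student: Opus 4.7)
\emph{Plan.} The strategy is concentration-compactness adapted to the Lorentz-invariant but non-scale-invariant geometry of the hyperboloid. Let $\{f_n\}_{n\geq 1}\subset L^2(\H^d)$ be an extremizing sequence with $\|f_n\|_{L^2(\H^d)}=1$ and $\|Tf_n\|_{L^p(\R^{d+1})}\to \mathbf{H}_{d,p}$. Three families of symmetries preserve the operator norm of $T$: spacetime translations of $\R^{d+1}$ (which modulate $f$), the Lorentz group $O(d,1)$ acting both unitarily on $L^2(\H^d)$ and as isometries of $\R^{d+1}$, and multiplication by unimodular scalars. First I would apply elements of these groups to each $f_n$ in order to ``center'' the sequence, for instance so that the distribution $|Tf_n|^p\,\d x\,\d t$ neither vanishes nor escapes to infinity in any single direction.

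\emph{Splitting.} The concentration-compactness dichotomy forbids splitting as follows. If $f_n=g_n+h_n$ with $\|g_n\|_{L^2}^2\to\alpha\in(0,1)$ and $\|h_n\|_{L^2}^2\to 1-\alpha$, with the two pieces asymptotically orthogonal in $L^2(\H^d)$ and with $Tg_n,Th_n$ asymptotically decoupled in $L^p(\R^{d+1})$, then
\begin{equation*}
\|Tf_n\|_{L^p}^p = \|Tg_n\|_{L^p}^p + \|Th_n\|_{L^p}^p + o(1) \leq \mathbf{H}_{d,p}^p \bigl(\alpha^{p/2}+(1-\alpha)^{p/2}\bigr)+o(1).
\end{equation*}
For $p>2$ the bracketed factor is strictly smaller than $1$ on $(0,1)$, so this contradicts extremality. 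The nontrivial input is the asymptotic decoupling, which can be approached via the identity $\|Tf\|_{L^p}^p\propto\|f\sigma\ast\cdots\ast f\sigma\|_{L^2}^2$ when $p=2k$ is an even integer (reducing $L^p$-decoupling to $L^2$-decoupling of multilinear convolutions, using the tools to be developed in \S\ref{sec:Convolutions}) and by interpolation/approximation for general $p$.

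\emph{Scale escape.} The essential new obstruction, absent in the compact-surface setting of \cite{FVV11,FVV12}, is escape of scale. Two dual cases arise. First, after a Lorentz boost moving any concentration point of $f_n$ to the vertex, suppose $f_n(y)\approx N_n^{d/2}\phi(N_ny)$ with $N_n\to\infty$; then the hyperboloid looks like its osculating paraboloid at this scale, and the Taylor expansion $\sqrt{1+|y|^2/N_n^2}=1+|y|^2/(2N_n^2)+O(N_n^{-4})$ in the phase of $T$ yields
\begin{equation*}
\|Tf_n\|_{L^p(\R^{d+1})} = (1+o(1))\, N_n^{-\frac{d}{2}+\frac{d+2}{p}}\,\|T_{\mathrm{par}}\phi\|_{L^p(\R^{d+1})},
\end{equation*}
where $T_{\mathrm{par}}$ is the standard paraboloid extension operator. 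Since $p>\tfrac{2(d+2)}{d}$ in the non-endpoint range, the exponent is strictly negative and the right-hand side vanishes, contradicting extremization. Dually, if $f_n$ escapes to spacelike infinity on $\H^d$ (cone-like regime), the approximation $\sqrt{1+|y|^2}\approx|y|$ at large $|y|$ leads to a comparison with the cone extension operator, and the non-endpoint condition $p<\tfrac{2(d+1)}{d-1}$ forces the analogous ratio to vanish.

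\emph{Conclusion.} Combining the above, after symmetry normalization the extremizing sequence is precompact in $L^2(\H^d)$. A weak subsequential limit $f$ must then be nonzero (vanishing is excluded by $\|Tf_n\|_{L^p}\to\mathbf{H}_{d,p}>0$), and a Brezis--Lieb style missing-mass argument upgrades weak to strong convergence so that $f$ is an extremizer. \emph{The hardest step} will be setting up a profile decomposition that coherently detects both paraboloid-type concentration at any point of $\H^d$ (after Lorentz centering) and cone-type escape at infinity, within a single framework on a surface lacking dilation homogeneity; this is where the multilinear convolution estimates of \S\ref{sec:Convolutions} are expected to play the decisive role in controlling the cross terms between distinct profiles.
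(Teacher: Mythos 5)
Your plan takes a genuinely different route from the paper's. You propose a full profile decomposition in the spirit of Bahouri--G\'erard and Christ--Shao--Ramos, tracking splitting, parabolic concentration, and conic escape, and ruling out the latter two by asymptotic comparison of $T$ to the paraboloid and cone extension operators under rescaling, with the strict inequalities $\tfrac{2(d+2)}{d}<p<\tfrac{2(d+1)}{d-1}$ forcing the rescaled norms to vanish. The paper avoids constructing a profile decomposition altogether. Instead it tessellates $\H^d$ into Lorentz-comparable caps, proves a refined ``weak cap interaction'' estimate (a fractional-integration argument for $d=1$; an explicit analysis of $\sigma\ast\sigma$ at the even endpoint $p=4$ combined with log-convexity for $d=2$), and deduces a \emph{special cap}: after a suitable Lorentz boost a positive universal fraction of $L^2$ mass sits in a fixed bounded region $\D$. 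This rules out conic escape outright, since the boost absorbs it. Parabolic concentration is then ruled out inside Proposition \ref{NonZeroWeakLimit} not by a Taylor comparison to the paraboloid operator but by log-convexity: $\|Tf_{n,0}\|_{L^p}\le \|Tf_{n,0}\|_{L^{2(d+2)/d}}^\theta\|Tf_{n,0}\|_{L^\infty}^{1-\theta}$ with $\theta<1$ gives a positive lower bound on $\|Tf_{n,0}\|_{L^\infty}$, which is incompatible with $f_n$ concentrating on a shrinking cap. Finally the Fanelli--Vega--Visciglia lemma (Lemma \ref{FVVLemma}), together with Rellich compactness on the Klein--Gordon side (Proposition \ref{AeConvergenceT}), replaces the Brezis--Lieb step. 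What your approach buys is generality --- a profile decomposition would in principle reach all dimensions and endpoints --- while the paper's route is deliberately elementary, tied to the even-integer convolution structure at $p=4,6$, and consequently limited to $d\le 2$ (the authors say as much, pointing to bilinear restriction methods for $d\ge 3$).

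That said, as written your proposal has genuine gaps. You acknowledge that the asymptotic $L^p$-decoupling across profiles and the profile decomposition itself are not constructed (``the hardest step will be\ldots''); these are the technical heart of your route and are left as plans. Two smaller but real issues: (i) you treat ``Lorentz boost to the vertex'' and ``cone-like escape to spacelike infinity'' as two separate obstructions needing separate comparison arguments, but Lorentz boosts already transport any point of $\H^d$ to the vertex --- so once you quotient by the group, only the parabolic concentration regime remains, and a separate cone comparison is redundant (this is exactly what the paper's special cap exploits); (ii) the sentence ``vanishing is excluded by $\|Tf_n\|_{L^p}\to\mathbf{H}_{d,p}>0$'' is not correct as stated: weak vanishing of $f_n$ is perfectly compatible with $\|Tf_n\|_{L^p}\to\mathbf{H}_{d,p}$ (that is precisely what happens for concentrating sequences), and it is your scale-escape argument --- not positivity of the operator norm --- that must rule it out. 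These need to be closed for the plan to become a proof.
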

As suggested, our proof of Theorem \ref{Thm2} relies on concentration-compactness arguments. The heart of the matter lies in the construction of a {\it special cap}, i.e. a cap that contains a positive universal proportion of the total mass in an extremizing sequence, possibly after applying the symmetries of the problem. This rules out the possibility of ``mass concentration at infinity" and is the missing part in \cite[Proposition 2.1]{Qu15}, which originally outlined the proof of a dichotomy statement for extremizing sequences. The successful quest for a special cap, carried out in \S \ref{sec:SpecialCap}, relies partly on the fact that the lower endpoint $p$ is an even integer in these dimensions, and that the corresponding $(p/2)$-fold convolution of the measure $\sigma$ with itself, when properly parametrized, decays to zero at infinity. In this regard, our argument does not generalize to dimensions $d\geq 3$. Other tools (e.g. coming from bilinear restriction theory, as in \cite{Ca17, FLS16, Ra12}) may be required to address the existence of extremizers in this general non-endpoint setting.
In order to present elementary and self-contained arguments that exploit the convolution structure of the problem, we focus  in this paper  on the lower dimensional cases $d\in\{1,2\}$.
We plan to address the higher dimensional situation in a later work.
 
\subsection{Klein--Gordon propagator} 
 As already pointed out, estimates for Fourier extension operators are related to estimates for dispersive partial differential equations. In our case, the operator $T$ is related to the following Klein--Gordon equation 
\begin{align}\label{kleingordon}
\begin{split}
\partial_t^2u = \Delta_x u&-u,~~(x,t)\in\R^d\times\R ;\\
u(x,0)=u_0(x),&~~~ \partial_t u(x,0)=u_1(x).
\end{split}
\end{align} 
The connection comes from the following  operator, the \emph{Klein--Gordon propagator}, 
\begin{equation*}
    e^{it\sqrt{1-\Delta}}g(x):= \frac{1}{(2\pi)^d}\int_{\R^d}\widehat{g}(\xi)\,e^{i x\cdot\xi}\,e^{it\sqrt{1+|\xi|^2}}\d\xi.
\end{equation*}
Indeed, one can see that solutions to \eqref{kleingordon} can be written as
\begin{align*}
    \begin{split}
    u(\cdot,t)& =\frac{1}{2}\left(e^{it\sqrt{1-\Delta}}u_0(\cdot)-i e^{it\sqrt{1-\Delta}}(\sqrt{1-\Delta})^{-1}u_1(\cdot)\right)+  \\ &  \ \ \ \ \ \ \ \ \ \frac{1}{2}\left(e^{-it\sqrt{1-\Delta}}u_0(\cdot)+i e^{-it\sqrt{1-\Delta}}(\sqrt{1-\Delta})^{-1}u_1(\cdot)\right),
    \end{split}
\end{align*}
and that 
\begin{equation}\label{July19_12:43am}
Tf(x,t)=(2\pi)^{d} \,e^{it\sqrt{1-\Delta}} g(x),
\end{equation} 
where 
\begin{equation}\label{fromextensiontoKG}
\widehat{g}(y):=\frac{f(y)}{\sqrt{1+|y|^2}}.
\end{equation}
This relation implies that the estimate \eqref{ExtensionInequality} is equivalent to 
\begin{equation*}
   \|e^{it\sqrt{1-\Delta}}g\|_{ L_{x,t}^p(\R^d\times\R)}\leq (2\pi)^{-d} \,{\bf H}_{d,p}\,\|g\|_{H^{\frac{1}{2}}(\R^d)},
\end{equation*}
where $H^s(\R^d)$, for $s \geq 0$, is the nonhomogeneous Sobolev space defined by
$$H^{s}(\R^d)=\{g\in L^2(\R^d):~\|g\|_{H^s(\R^d)}^2:=\int_{\R^d}|\widehat{g}(\xi)|^2(1+|\xi|^2)^s\d\xi<\infty\}.$$
This equivalent formulation will be very useful in this paper. In our
concentration-compactness arguments, we explore the fact that convergence of an extremizing sequence $\{f_n\}$ in $L^2(\H^d)$ is equivalent to convergence in $H^{\frac{1}{2}}(\R^d)$ of the sequence $\{g_n\}$ determined by \eqref{fromextensiontoKG}, and, once on the Sobolev side, we may use local compact embeddings. Observe also that, for each $t\in\R$, the operator $e^{it\sqrt{1-\Delta}}$ is unitary in $H^{\frac{1}{2}}(\R^d)$.

\subsection*{Historical remarks} Our results complement the recent, vast and very interesting body of work concerning sharp Fourier restriction and Strichartz estimates.  Sharp Fourier restriction theory has a relatively short history, with the first works on the subject going back to Kunze \cite{Ku03}, Foschi \cite{Fo07} and Hundertmark--Zharnitsky \cite{HZ06}. These works concern extremizers and optimal constants for the Strichartz inequality for the homogeneous Schr\"odinger equation in the lower dimensional cases. These are the cases for which the Strichartz exponent is an even integer, and one can rewrite the left-hand side of the Strichartz inequality  as an ${L}^2$-norm, and invoke Plancherel's Theorem in order to reduce the problem to a multilinear convolution estimate. This subject is becoming increasingly more popular, as shown by the large body of work that appeared in the last decade, and in particular in the last few years.
We mention a few interesting works that deal with sharp Fourier restriction theory on 
spheres \cite{CFOST15, COS15, CS12a, CS12b, Fo15, FLS16, Sh16},
paraboloids \cite{BBCH09, Ca09, CQ14, Go17, Sh09a}, and
cones \cite{BR13, Bu10, Qu13, Ra12}. 
Perturbations of these manifolds have been considered in \cite{FVV11, JSS14, OS14, OS12, OSQ16}.
Sharp bilinear Fourier restriction theory is the subject of \cite{BBJP14, BJO16, J14, OR14}, whereas
other instances of sharp Strichartz inequalities \cite{BBI15}, sharp Sobolev--Strichartz inequalities \cite{FVV12} and sharp Airy--Strichartz inequalities \cite{HS12, Sh09b} have been considered as well.
Finally, we mention a recent survey \cite{FOS17} on sharp Fourier restriction theory which may be consulted for information complementary to that on this Introduction, including a discussion on delta calculus, and further references.

\subsection*{Structure of the paper}
The paper is organized as follows.
In \S \ref{sec:Lorentz} we discuss Lorentz transformations and their relevance to the problem. 
In particular, we decompose the hyperboloid as a disjoint union of {\it caps}, and study how these interact with certain Lorentz transformations.
In \S \ref{sec:Convolutions} we study properties of the $n$-fold convolution of the measure $\sigma$ with itself, explicitly computing some particular instances.
In \S \ref{sec:Nonexistence} we prove Theorem \ref{Thm1}.
The first step is to exhibit an explicit extremizing sequence. 
Once this is done, we appeal to geometric properties of the convolution measure to guarantee that extremizers do not exist.
Finally, \S \ref{sec:SpecialCap} and \S \ref{sec:CC} are devoted to the proof of Theorem \ref{Thm2}. In \S \ref{sec:SpecialCap} we proceed with a detailed construction of a {\it special cap} which contains a non-negligible universal amount of the total mass in an extremizing sequence (properly symmetrized). Once a special cap is available, in \S \ref{sec:CC} we feed this information into the concentration-compactness machinery of Fanelli--Vega--Visciglia \cite{FVV11, FVV12} to ensure that extremizers exist. It is interesting to note that this latter part of the argument works in all dimensions. 

\subsection*{A word on forthcoming notation}
If $x,y$ are real numbers, we will write $x=O(y)$ or $x\lesssim y$ if there exists a finite constant $C$ such that $|x|\leq C|y|$, and $x\simeq y$ if $C^{-1}|y|\leq |x|\leq C|y|$ for some finite constant $C\neq 0$. If we want to make explicit the dependence of the constant $C$  on some parameter $\alpha$, we will  write $x=O_\alpha(y)$ or $x\lesssim_\alpha y$. As is customary, the constant $C$ is allowed to change from line to line. The set of natural numbers is $\N:=\{1,2,3,\ldots\}$. Real and imaginary parts of a complex number $z\in\Co$ will be denoted by $\Re(z)$ and $\Im(z)$, respectively.
The usual inner product between vectors $x,y\in\R^d$ will continue to be denoted by $ x\cdot y$, and we define $\langle x\rangle:=\sqrt{1+|x|^2}$. Given a finite set $A$, we will denote its cardinality by $|A|$.
Finally, $\one_E$ will stand for the indicator function of a given set $E$.

\section{Lorentz invariance}\label{sec:Lorentz}
The measure $\sigma$ defined in \eqref{defsigma} has been referred to as the {\it Lorentz invariant measure} on the hyperboloid. 
This section is meant to explain and expand on this terminology. 
The Lorentz group, denoted by $\L$, is defined as the group of invertible linear transformations in $\R^{d+1}$ that preserve the bilinear form
$$B(x,y)=x_{d+1}y_{d+1}-x_dy_d-\ldots-x_1y_1.$$
In particular, if $L \in \L$, we have $|\det L| = 1$. We denote the subgroup of $\L$ that preserves the hyperboloid $\H^d$ by $\L^+$.
The measure $\sigma$ is likewise preserved under the action of $\L^+$, in the sense that
$$\int_{\H^d} f\circ L \;\d\sigma=\int_{\H^d} f\;\d\sigma,$$
for every $f\in L^1(\H^d)$ and $L\in\L^+$.
This can be readily seen by writing
$$\d\sigma(y,y')=2 \ddirac{y'^2-|y|^2-1}\one_{\{y'>0\}}(y,y')\,\d y\,\d y'.$$
Now, given $t\in(-1,1)$, define the linear map $L^t:\R^{d+1}\to\R^{d+1}$ via
\begin{equation*}
L^t(\xi_1,\ldots,\xi_d,\tau)=\Big(\frac{\xi_1+t\tau}{\sqrt{1-t^2}},\xi_2,\ldots,\xi_d,\frac{\tau+t\xi_1}{\sqrt{1-t^2}}\Big).
\end{equation*}
The family $\{L^t\}_{t\in(-1,1)}$ defines a one-parameter subgroup of $\L^+$.
In particular, the inverse of $L^t$ is $L^{-t}$.
Further notice that, given an orthogonal matrix $A\in\textup{O}(d)$, the transformation $(\xi,\tau)\mapsto \rho_A(\xi,\tau)=(A\xi,\tau)$ belongs to $\L^+$.

As already observed in \cite[\S 3]{Qu15}, given $(\xi,\tau)\in\R^{d+1}$ satisfying $\tau>|\xi|$, a suitable composition of transformations of the form $L^t$ and $\rho_A$ as defined above produces a map $L\in\L^+$, such that 
$$L(\xi,\tau)=(0,\sqrt{\tau^2-|\xi|^2}).$$
This observation will simplify several computations involving convolutions of the measure $\sigma$ with itself, which we explore in the next section.
Given $p\in[1,\infty]$, $L\in\L^+$ and $f\in L^p(\H^d)$, define the composition $L^* f=f\circ L$. The considerations made so far imply that 
\begin{equation}\label{NormInvariance}
\|L^* f\|_{L^p(\H^d)}=\|f\|_{L^p(\H^d)},\text{ and }\|T(L^*f)\|_{L^p(\R^{d+1})}=\|Tf\|_{L^p(\R^{d+1})}.
\end{equation}
In particular, if $\{f_n\}_{n\in\N}$ is an extremizing sequence for inequality \eqref{ExtensionInequality} and $\{L_n\}_{n\in\N}\subset\L^+$, then $\{L_n^* f_n\}_{n\in\N}$ is still an extremizing sequence for inequality \eqref{ExtensionInequality}.

The Lorentz invariance just discussed will be crucial in several of our arguments, as it allows to localize the action to a fixed bounded region. We now detail this principle in the lower dimensional setting $d\in\{1,2\}$.

\subsection{One-dimensional tessellations}
Let us define a {\it one-dimensional cap} to be a set of the form
\begin{align}\label{July21_7:34pm}
\begin{split}
\C_k&:=\{(\xi,\tau)\in\H^1:\sinh(k-1/2)\leq\xi<\sinh(k+1/2)\}    \\
    &\,\,=\{(\sinh(u),\cosh(u)) \in \R^2:k-1/2\leq u<k+1/2\}\,,
\end{split}
\end{align}
for some $k\in\Z$. The following simple result already illustrates the main point.

\begin{lemma}\label{1Dtessellation}
Let $k\in\Z$, and let $\C_k\subset\H^1$ be the corresponding one-dimensional cap.
Then:
\begin{itemize}
\item[(a)] $\sigma(\C_k)=1$.
\item[(b)] There exists $t_k\in(-1,1)$, such that
$L^{t_k}(\C_k)=\C_0.$
\end{itemize}
\end{lemma}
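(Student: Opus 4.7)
The plan is to exploit the hyperbolic parametrization $(\sinh u, \cosh u)$ already baked into the definition of $\C_k$, and to identify the boost $L^t$ as a hyperbolic rotation via the substitution $t=\tanh s$.

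For part (a), I would push forward the measure $\sigma$ to the parameter $u$. Writing $y=\sinh u$, one has $\mathrm{d}y=\cosh u\,\mathrm{d}u$ and $\sqrt{1+y^2}=\cosh u$. Since the dual definition of $\sigma$ gives
$$\sigma(\C_k)=\int_{\{y:\sinh(k-1/2)\leq y<\sinh(k+1/2)\}}\frac{\mathrm{d}y}{\sqrt{1+y^2}},$$
the change of variables produces an integrand identically equal to $1$ on an interval of length $1$, so $\sigma(\C_k)=1$.

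For part (b), I would set $t=\tanh s$ so that $(1-t^2)^{-1/2}=\cosh s$ and $t(1-t^2)^{-1/2}=\sinh s$. The definition of $L^t$ collapses in dimension $d=1$ (there are no intermediate coordinates) to
$$L^{\tanh s}(\xi,\tau)=(\xi\cosh s+\tau\sinh s,\ \tau\cosh s+\xi\sinh s).$$
Applied to $(\sinh u,\cosh u)\in\H^1$, the hyperbolic addition formulas yield $L^{\tanh s}(\sinh u,\cosh u)=(\sinh(u+s),\cosh(u+s))$. Thus $L^{\tanh s}$ is simply a translation by $s$ in the $u$-parameter. Choosing $s=-k$ and setting $t_k:=\tanh(-k)=-\tanh(k)\in(-1,1)$ sends the parameter interval $[k-1/2,k+1/2)$ of $\C_k$ onto the parameter interval $[-1/2,1/2)$ of $\C_0$, giving $L^{t_k}(\C_k)=\C_0$.

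There is essentially no obstacle: the lemma is a direct verification, and its only purpose is to record that the caps tile $\H^1$ into unit-mass pieces that are all Lorentz-equivalent to $\C_0$. The only mild subtlety to double-check is the orientation of the boost (i.e.\ the sign of $t_k$), which is fixed by the computation above.
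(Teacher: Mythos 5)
Your proof is correct and takes essentially the same route as the paper: the change of variables $y=\sinh u$ for part (a), and the identification $L^{\tanh s}(\sinh u,\cosh u)=(\sinh(u+s),\cosh(u+s))$ with $t_k=-\tanh k$ for part (b). The only cosmetic difference is that the paper verifies the identity by computing the inverse $L^{-t_k}$ applied to $\C_0$, while you compute $L^{t_k}$ directly via the substitution $t=\tanh s$; the underlying observation that the boost acts as translation in the hyperbolic-angle parameter is identical.
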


\begin{proof}
The proof of part (a) amounts to a straightforward change of variables:
$$\sigma(\C_k)
=\int_{\sinh(k-\frac12)}^{\sinh(k+\frac12)}\frac{\d y}{\sqrt{1+y^2}}
=\int_{k-\frac12}^{k+\frac12}\frac{\cosh (u)\d u}{\sqrt{1+\sinh (u)^2}}=1.
$$
For part (b), let $t_k=-\tanh(k)\in(-1,1)$. Then the Lorentz transformation $L^{t_k}$ provides a bijection between the caps $\C_k$ and $\C_0$. That $L^{t_k}(\C_k)=\C_0$ follows from
\begin{align*}
L^{-t_k}(\sinh(u),\cosh(u))&=\left(\frac{\sinh(u)+\tfrac{\sinh(k)}{\cosh(k)} \cosh(u)}{\sqrt{1-\tfrac{\sinh^2(k)}{\cosh^2(k)}}},\frac{\cosh(u)+\tfrac{\sinh(k)}{\cosh(k)}\sinh(u)}{\sqrt{1-\tfrac{\sinh^2(k)}{\cosh^2(k)}}}\right) \\
&=(\sinh(u+k),\cosh(u+k)).    
\end{align*}
This concludes the proof of the lemma.
\end{proof}
\begin{figure}
  \centering
   \includegraphics[height=6cm]{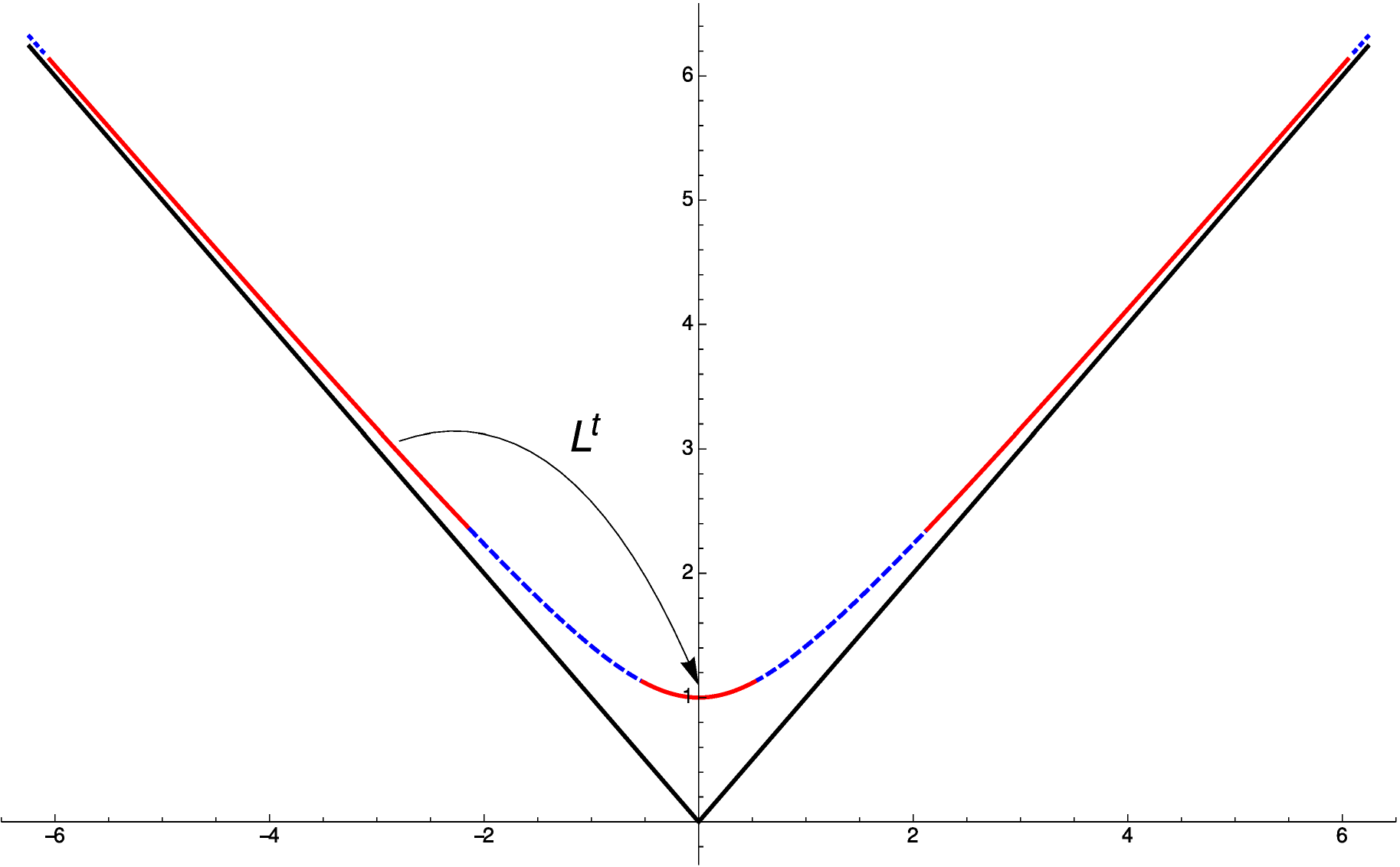}
   \caption{The one-dimensional cap movement: a carefully chosen Lorentz transformation interchanges the caps. Here, $L^{t}(\C_{-2})=\C_0$ for $t = \tanh(2)$.}
\label{fig:1DCapMovement}
\end{figure}

\subsection{Two-dimensional tessellations}
We now define a {\it two-dimensional cap} to be a set of the form
\begin{equation}\label{2DCap}
\C_{n,j}:=\left\{(r\cos\theta,r\sin\theta,\langle r\rangle)\in\H^2:\;2^n\leq r< 2^{n+1}\text{ and }\frac{2\pi j}{2^n}\leq\theta<\frac{2\pi(j+1)}{2^n}\right\},
\end{equation}
for some $n\in\N$ and $0\leq j<2^n$, and additionally we consider
\begin{equation}\label{2DCap0}
\C_{0,0}:=\{(\xi,\tau)\in\H^2: |\xi|<2\}.
\end{equation}
Grouping together the caps of the $n$-th generation, we notice that the hyperboloid $\H^2$ is partitioned into a disjoint union of annuli,
\begin{equation}\label{DefAnnulus}
\H^2=\bigcup_{n=0}^\infty\A_n, \text{ where }\mathcal{A}_n:=\bigcup_{j=0}^{2^n-1} \C_{n,j}.
\end{equation}
See Figure \ref{fig:Ourico} for an illustration of these decompositions.

\begin{figure}
  \centering
  \includegraphics[height=6cm]{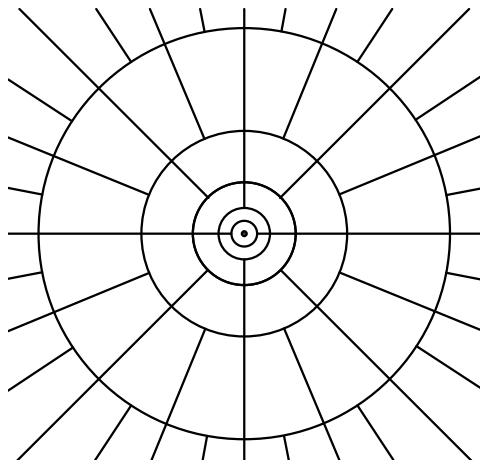}
    \caption{Projection of the tessellation of $\H^2$ into caps $\{\C_{n,j}\}$ onto the horizontal plane $\tau=0$.}
\label{fig:Ourico}
\end{figure}

Given $\varphi\in[0,2\pi)$, denote by $R_\varphi$ the rotation in $\R^3$ by angle $\varphi$ around the vertical $\tau-$axis:
$$R_\varphi(\xi_1,\xi_2,\tau)=(\xi_1\cos\varphi+\xi_2\sin\varphi,-\xi_1\sin\varphi+\xi_2\cos\varphi,\tau).$$
The next result is the two-dimensional equivalent of Lemma \ref{1Dtessellation}, and in particular shows that any cap can be mapped into the ball of radius $2\sqrt{2}\pi$ centered at the origin by an appropriate composition of Lorentz transformations. See Figure \ref{fig:2DCapmovement} for an illustration of these movements.

\begin{lemma}\label{2Dtessellation}
Let $n\in\N_0$ and $0\leq j<2^n$, and let $\C_{n,j}\subset\H^2$ be the corresponding two-dimensional cap. Then: 
\begin{itemize} 
\item[(a)] $\sigma(\C_{n,j})\simeq 1$.
\item[(b)] There exists $t\in [0,1)$ and $\varphi\in [0,2\pi)$, such that 
$$(L^{-t}\circ R_\varphi)(\C_{n,j})
\subseteq 
\{(\xi,\tau)\in\H^2: |\xi|\leq 2\sqrt{2}\pi\}.$$ 
\end{itemize}
\end{lemma}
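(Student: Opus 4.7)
For part (a), my plan is a direct computation in polar coordinates. Since $\d\sigma = r(1+r^2)^{-1/2}\,\d r\,\d\theta$, for $n\geq 1$ one obtains
\[
\sigma(\C_{n,j}) = \frac{2\pi}{2^n}\int_{2^n}^{2^{n+1}}\frac{r\,\d r}{\sqrt{1+r^2}} = \frac{2\pi}{2^n}\bigl(\sqrt{1+4^{n+1}}-\sqrt{1+4^n}\bigr).
\]
Rationalizing the difference of square roots and using the elementary two-sided bound $2^k\leq\sqrt{1+4^k}\leq\sqrt{2}\,2^k$ for $k\geq 0$ shows $\sigma(\C_{n,j})\simeq 1$, uniformly in $n,j$; the remaining case $\C_{0,0}$ is handled directly, giving $\sigma(\C_{0,0})=2\pi(\sqrt{5}-1)$.

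For part (b), the case $n=0$ is trivial since $\C_{0,0}\subseteq\{|\xi|<2\}\subseteq\{|\xi|\leq 2\sqrt{2}\pi\}$; take $\varphi=0$ and $t=0$. For $n\geq 1$, my plan is a two-step reduction: (i) rotate by $\varphi:=\pi(2j+1)/2^n$, so $R_\varphi$ transports $\C_{n,j}$ to the centered angular range $\theta'\in[-\pi/2^n,\pi/2^n)$; (ii) set the reference radius $r_0:=2^{n-1}$ and $t:=t_n:=r_0/\sqrt{1+r_0^2}\in[0,1)$, so that the boost $L^{-t_n}$ sends the reference point $P_0:=(r_0,0,\sqrt{1+r_0^2})\in\H^2$ to the vertex $e_0:=(0,0,1)$.

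The essential tool is Lorentz-invariance of the bilinear form $B$. For a cap point $P\in\C_{n,j}$, write $P':=(L^{-t_n}\circ R_\varphi)(P)=(\xi',\tau')\in\H^2$. Since $L^{-t_n}(P_0)=e_0$, invariance gives
\[
\sqrt{1+|\xi'|^2}=\tau'=B(P',e_0)=B(R_\varphi P,P_0)=\sqrt{(1+r^2)(1+r_0^2)}-rr_0\cos\theta',
\]
with $r\in[2^n,2^{n+1})$ and $\theta'\in[-\pi/2^n,\pi/2^n)$. The desired conclusion $|\xi'|\leq 2\sqrt{2}\pi$ thus reduces to the pointwise bound $F(r,\theta'):=\sqrt{(1+r^2)(1+r_0^2)}-rr_0\cos\theta'\leq\sqrt{1+8\pi^2}$ over the rectangular parameter region.

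To close the argument, I would verify monotonicity of $F$ in each variable. In $\theta'$: $\partial_{\theta'}F=rr_0\sin\theta'\geq 0$ for $\theta'\geq 0$, so by even symmetry the maximum occurs at $\theta'=\pi/2^n$. In $r$: $\partial_r F=r\sqrt{(1+r_0^2)/(1+r^2)}-r_0\cos\theta'\geq r_0(1-\cos\theta')\geq 0$, since the elementary algebraic equivalence $r\sqrt{(1+r_0^2)/(1+r^2)}\geq r_0 \iff r\geq r_0$ applies (we have $r\geq 2^n>2^{n-1}=r_0$). Hence $F$ is maximized at the corner $(r,\theta')=(2^{n+1},\pi/2^n)$. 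Applying $\sqrt{1+x}\leq 1+x/2$ after factoring $4^{2n}$ from $(1+4^{n+1})(1+4^{n-1})$, together with $1-\cos x\leq x^2/2$, yields the clean estimate $F_{\max}\leq 9/4+\pi^2/2$; a short numerical check confirms $(9/4+\pi^2/2)^2<1+8\pi^2$, completing the proof. The main subtlety lies in the choice of $r_0$: natural candidates inside the annulus (such as the arithmetic or geometric mean of $\{2^n,2^{n+1}\}$) leave the outer-corner value of $F$ of order $\pi^2$, which is too large to fit in the ball of radius $2\sqrt{2}\pi$; picking $r_0=2^{n-1}$ just below the annulus is what simultaneously guarantees $\partial_r F\geq 0$ throughout the cap and keeps the $\theta'$-correction uniformly bounded by $\pi^2/2$.
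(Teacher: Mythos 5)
Your proof is correct and complete, and it differs from the paper's argument in part~(b) in a way worth noting. The paper rotates to center the cap angularly (as you do) but then chooses the boost parameter $t=1-(\pi/2^n)^2$ (restricting to $n\geq 3$ and taking $t=\varphi=0$ otherwise), and verifies the conclusion by directly bounding each coordinate of the boosted point: it shows $\big|\tfrac{r\cos\theta-t\langle r\rangle}{\sqrt{1-t^2}}\big|\leq 2\pi$ and $|r\sin\theta|\leq 2\pi$, which entails some fiddly elementary inequalities (e.g.\ establishing $r\cos\theta-t\langle r\rangle\geq 0$ via $1+\tfrac{x}{2}\geq\sqrt{1+x}$) before combining to get $2\sqrt{2}\pi$. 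You instead choose the boost so that a reference point $P_0=(2^{n-1},0,\langle 2^{n-1}\rangle)$ goes to the vertex, and exploit invariance of the Lorentz form $B$ to reduce the whole thing to bounding the single scalar $F(r,\theta')=B(R_\varphi P,P_0)=\sqrt{1+|\xi'|^2}$. This is conceptually cleaner: the monotonicity of $F$ in each variable is easy to check given $r_0<r$, the maximum is pinned to a single corner, and the estimate $F_{\max}\leq 9/4+\pi^2/2<\sqrt{1+8\pi^2}$ is a clean one-line computation. Your argument also handles all $n\geq 1$ uniformly rather than branching at $n=3$, and in fact lands the cap in a slightly smaller ball than $2\sqrt{2}\pi$. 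The insight that $r_0=2^{n-1}$ (just below the annulus, rather than inside it) is what makes both the monotonicity and the angular correction $\tfrac{\pi^2}{2}$ work simultaneously is a nice observation the paper does not make. Part~(a) is essentially identical to the paper's computation.
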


\begin{proof}
Let $n\in\N$ and $0\leq j<2^n$.
A computation in polar coordinates shows that 
$$\sigma(\C_{n,j})=
\int_{2^n}^{2^{n+1}}\left(\int_{\frac{2\pi j}{2^n}}^{\frac{2\pi (j+1)}{2^n}}\d\theta\right) \frac{r\d r}{\sqrt{1+r^2}}
=\frac{2\pi}{2^n}\left(\sqrt{1+4^{n+1}}-\sqrt{1+4^{n}}\right),
$$
from which one easily checks that 
$$\frac 9{10}\leq\frac{\sigma(\C_{n,j})}{2\pi}\leq 1.$$
Moreover, ${\sigma(\C_{0,0})}=2\pi(\sqrt{5}-1)$, and so one sees that the $\sigma$-measure of any two-dimensional cap is comparable to 1. This establishes part (a).

For part (b), we lose no generality in assuming $n\geq 3$, for otherwise we can simply take $t=\varphi=0$.
Given such $n$, and $0\leq j<2^n$, choose $\varphi\in[0,2\pi)$ so that
$$R_\varphi(\C_{n,j})
\subseteq
\Big\{(r\cos\theta,r\sin\theta,\langle r\rangle)\in\H^2: 2^n\leq r<2^{n+1}\text{ and }|\theta|\leq\frac{\pi}{2^n}\Big\}.$$
Let $t:={1-(\frac{\pi}{2^n})^2}$, which is nonnegative since $n\geq 3$. Noting that
$$(L^{-t}\circ R_\varphi)(\C_{n,j})
\subseteq
\left\{\left(\frac{r\cos\theta-t\langle r\rangle}{\sqrt{1-t^2}},r\sin\theta,\frac{\langle r\rangle-tr\cos\theta}{\sqrt{1-t^2}}\right)\in\H^2: 2^n\leq r<2^{n+1}\text{ and }|\theta|\leq\frac{\pi}{2^n}\right\},$$
it suffices to check that
$$\left|\frac{r\cos\theta-t\langle r\rangle}{\sqrt{1-t^2}}\right|\leq 2\pi,
\text{ and }
|r\sin\theta|\leq 2\pi.$$
Observe that $r\leq\langle r\rangle$ and $\cos\theta\geq \cos(\frac{\pi}{2^n})\geq1-\frac12(\frac{\pi}{2^n})^2>t$. We first claim that $r\cos\theta-t\langle r\rangle \geq 0$. In fact, using the fact that $1 + \frac{x}{2} \geq \sqrt{1 +x}$, we have
\begin{align*}
\frac{\cos \theta}{t} \geq \frac{1-\frac12(\frac{\pi}{2^n})^2}{1- (\frac{\pi}{2^n})^2} \geq 1 + \frac{1}{2^{2n+1}} \geq \sqrt{1 + \frac{1}{2^{2n}}} \geq \sqrt{1 + \frac{1}{r^2} } =  \frac{\langle r\rangle}{r}.
\end{align*}
Therefore it follows that 
$$\left|\frac{r\cos\theta-t\langle r\rangle}{\sqrt{1-t^2}}\right|
\leq\frac{r(\cos\theta-t)}{\sqrt{1-t^2}}
\leq\frac{r(1-t)}{\sqrt{1-t^2}}
=r\sqrt{\frac{1-t}{1+t}}
\leq r\sqrt{1-t}
<2^{n+1}\frac{\pi}{2^n}=2\pi.$$
Noting that $\frac{\sin(x)}{x}\leq 1$, we similarly have that
$$|r\sin\theta|<2^{n+1}\sin\left(\frac{\pi}{2^{n}}\right)=2\pi\frac{\sin(\frac{\pi}{2^{n}})}{\frac{\pi}{2^n}}\leq 2\pi.$$
This concludes the proof of the lemma.
\end{proof}

\begin{figure}
  \centering
 \includegraphics[height=6cm]{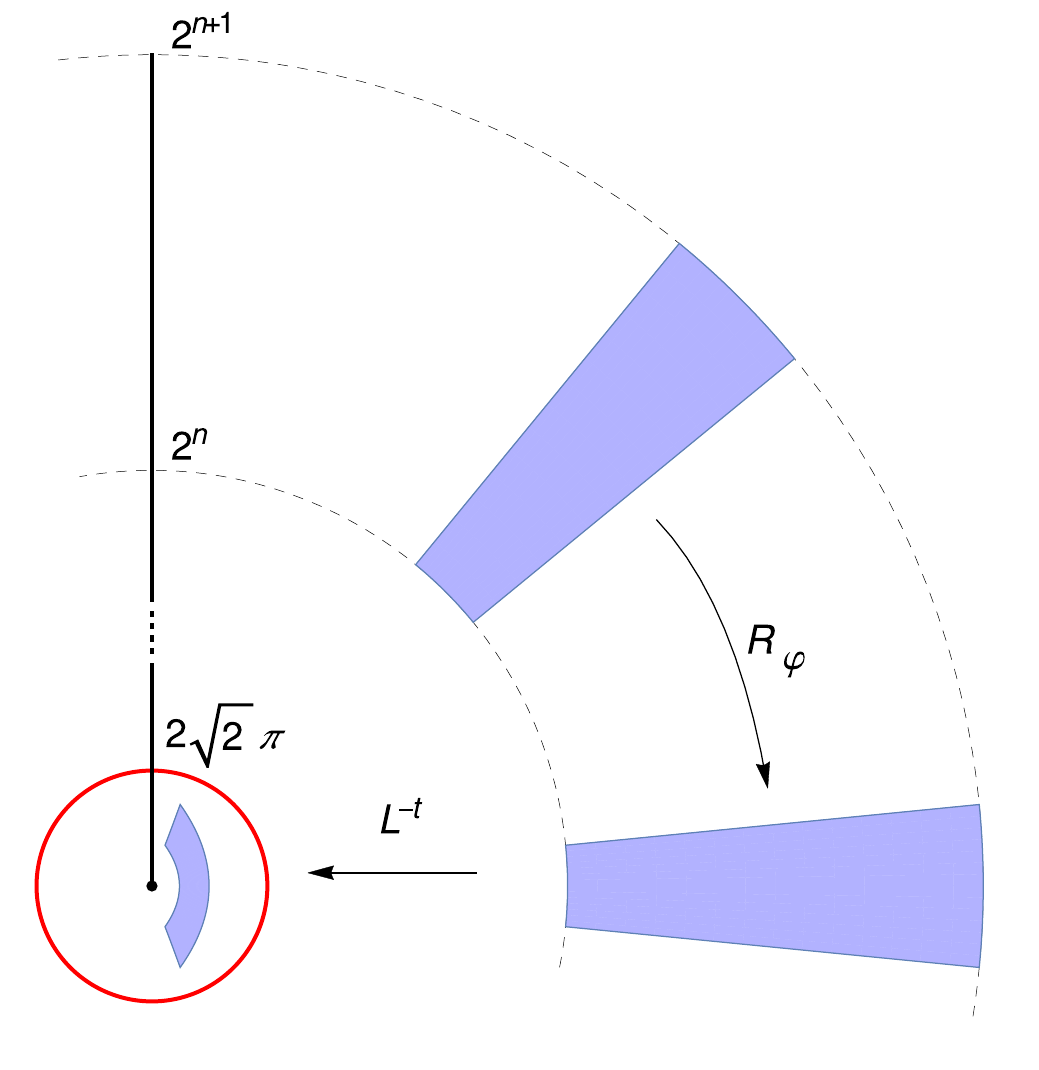}
    \caption{Projection of the two-dimensional cap movement: a rotation followed by a Lorentz transformation moves the cap inside the set $\{(\xi,\tau)\in\H^2: |\xi|\leq 2\sqrt{2}\pi\}$.}
\label{fig:2DCapmovement}
\end{figure}

\section{Convolutions}\label{sec:Convolutions}
In this section, we collect some facts about convolution measures that will be relevant in the sequel.
We start with some general considerations which hold in arbitrary dimensions $d\geq 1$. Let $\sigma^{(\ast n)}=\sigma\ast\ldots\ast\sigma$ denote the $n$-fold convolution of the Lorentz invariant measure $\sigma$ defined in \eqref{defsigma} with itself.
If $n\geq 2$, then the convolution measure $\sigma^{(\ast n)}$ is absolutely continuous with respect to Lebesgue measure on $\R^{d+1}$, and it is supported in the closure of the region
\begin{equation}\label{SuppConvolution}
\mathcal{P}_{d,n}:=\{(\xi,\tau)\in \R^d\times\R:\tau>\sqrt{n^2+|\xi|^2}\}.
\end{equation}
The Lorentz invariance discussed in the previous section implies that $\sigma^{(\ast n)}$ is constant along certain hyperboloids. More precisely, if $(\xi,\tau)\in\mathcal{P}_{d,n}$, then
\begin{equation}\label{LorentzInv}
\sigma^{(\ast n)}(\xi,\tau)=\sigma^{(\ast n)}(0,\sqrt{\tau^2-|\xi|^2}).
\end{equation}
The next result establishes some basic convolution properties on the one-dimensional hyperbola $(\H^1,\sigma)$. 

\begin{lemma}\label{ConvolutionBasicProperties}
Let $\sigma$ denote the Lorentz invariant measure on the hyperbola $\H^1$.
Then, for every $(\xi,\tau)\in \R\times\R$,
\begin{itemize}
\item[(a)] The convolution measure $\sigma\ast\sigma$ is given by 
$$(\sigma\ast\sigma)(\xi,\tau)=\frac{4}{\sqrt{\tau^2-\xi^2}\sqrt{\tau^2-\xi^2-4}}\, \one_{\{\tau\geq \sqrt{2^2+\xi^2}\}}(\xi,\tau).$$
\item[(b)] The following recursive formula holds for $n\geq 2$:
$$
\sigma^{(\ast (n+1))}(\xi,\tau)
=4\int_n^{\sqrt{\tau^2-\xi^2}-1}\frac{x \;\sigma^{(\ast n)}(0, x)}{(\sqrt{\tau^2-\xi^2}+1)^2-x^2)^{\frac12}(\sqrt{\tau^2-\xi^2}-1)^2-x^2)^{\frac 12}}\,\d x.
$$
\end{itemize}
\end{lemma}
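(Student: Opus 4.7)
The plan is to exploit Lorentz invariance to reduce both statements to computations on the $\tau$-axis, i.e. to evaluating $\sigma^{(\ast n)}(0,s)$ for $s>n$, and then to transfer the result back via \eqref{LorentzInv}. This is the most natural approach because the formulas in (a) and (b) depend on $(\xi,\tau)$ only through the Lorentz invariant $s:=\sqrt{\tau^2-\xi^2}$.

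For part (a), I would write the measure as $\d\sigma(y,y')=2\,\delta(y'^2-|y|^2-1)\one_{\{y'>0\}}\d y\,\d y'$ as noted in \S\ref{sec:Lorentz}. Unwinding the double convolution,
\[
(\sigma\ast\sigma)(0,s)=\int_{\R^2} 4\,\delta(z'^2-z^2-1)\,\delta\big((s-z')^2-z^2-1\big)\,\one_{\{0<z'<s\}}\,\d z\,\d z'.
\]
The two constraints $F(z,z'):=z'^2-z^2-1=0$ and $G(z,z'):=(s-z')^2-z^2-1=0$ are jointly equivalent to $z'=s/2$ and $z=\pm\sqrt{s^2/4-1}$, which requires $s>2$. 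A direct computation of the Jacobian $\partial(F,G)/\partial(z,z')$ at these points gives $|\det|=2s\sqrt{s^2-4}$, and summing the two contributions yields $(\sigma\ast\sigma)(0,s)=4/(s\sqrt{s^2-4})$. The claimed formula then follows from \eqref{LorentzInv}.

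For part (b), I would write $\sigma^{(\ast(n+1))}=\sigma\ast\sigma^{(\ast n)}$ and evaluate at $(0,s)$ with $s>n+1$:
\[
\sigma^{(\ast(n+1))}(0,s)=\int_{\R}\sigma^{(\ast n)}\!\Big(-y,\,s-\sqrt{1+y^2}\Big)\frac{\d y}{\sqrt{1+y^2}}.
\]
Applying \eqref{LorentzInv} inside the integral replaces the integrand by $\sigma^{(\ast n)}(0,x(y))$ where $x(y)^2=(s-\sqrt{1+y^2})^2-y^2=s^2+1-2s\sqrt{1+y^2}$. Symmetrizing in $y$ (the integrand depends only on $y^2$) and changing variables from $y\in(0,\infty)$ to $x$ — with the support constraint $x\in(n,s-1)$ coming from $\sigma^{(\ast n)}\equiv 0$ off $\mathcal{P}_{1,n}$ — produces, after a short algebraic simplification, the factorization
\[
y^2=\frac{((s-1)^2-x^2)\,((s+1)^2-x^2)}{4s^2},\qquad \frac{\d y}{\sqrt{1+y^2}}=\frac{x\,\d x}{s\,y}.
\]
Plugging this in and again invoking \eqref{LorentzInv} to pass from $s$ to $\sqrt{\tau^2-\xi^2}$ gives the stated recursion.

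I expect the main technical nuisance to be book-keeping for the change of variables in (b) — specifically, identifying the correct endpoints of the $x$-integral (where $y=0$ gives $x=s-1$, and where the integrand is cut off by $\supp\sigma^{(\ast n)}$ gives $x=n$) and simplifying $(s^2+1-x^2)^2-4s^2$ into the product $((s-1)^2-x^2)((s+1)^2-x^2)$ that appears in the denominator. Beyond that, both parts rely only on standard delta calculus and the Lorentz reduction already set up in \S\ref{sec:Lorentz}, so no further input is needed.
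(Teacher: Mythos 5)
Your proposal is correct and follows the same overall strategy as the paper: reduce via Lorentz invariance \eqref{LorentzInv} to evaluation on the $\tau$-axis and then compute directly. The only cosmetic difference is in part (a), where you apply the co-area formula to the two-constraint delta integral in $(z,z')$, while the paper first parametrizes the measure as $\d y/\langle y\rangle$ (so that $\xi=0$ forces $y_1=-y_2$ and collapses the double integral to a single one) and then does a scalar change of variables; in part (b) your derivation, after the substitution $x^2=s^2+1-2s\langle y\rangle$ and the factorization $(s^2+1-x^2)^2-4s^2=((s-1)^2-x^2)((s+1)^2-x^2)$, matches the paper's computation step for step.
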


\begin{proof}
We start with part (a).
By the Lorentz invariance \eqref{LorentzInv}, it suffices to prove that
\begin{equation}\label{ConvolutionAfterLorentz}
(\sigma\ast\sigma)(0,\tau)=\frac{4}{\tau\sqrt{\tau^2-4}}\one_{\{\tau\geq 2\}}(\tau).
\end{equation}
This can be obtained as follows: first of all,
$$(\sigma\ast\sigma)(0,\tau)
=\int_{\R} \ddirac{\tau-2\langle y\rangle} \frac{\d y}{\langle y\rangle^2}
=2\int_0^\infty \ddirac{\tau-2\langle y\rangle} \frac{\d y}{\langle y\rangle^2}.$$
Changing variables $u=\langle y\rangle$, and then $v=2u$, we have that
$$(\sigma\ast\sigma)(0,\tau)
=2\int_1^\infty \ddirac{\tau-2u}\frac{1}{u^2}\frac{u}{\sqrt{u^2-1}}\,\d u
=4\int_2^\infty  \frac{\ddirac{\tau-v}}{v\sqrt{v^2-4}}\,{\d v}.$$
This implies \eqref{ConvolutionAfterLorentz} at once, and finishes the proof of part (a).

We now turn to the proof of part (b). 
Again by Lorentz invariance, it suffices to establish
\begin{equation}\label{RecursiveFormula}
\sigma^{(\ast (n+1))}(0,\tau)
=4\int_n^{\tau-1}\frac{x\;\sigma^{(\ast n)}(0, x)}{\sqrt{(\tau+1)^2-x^2}\sqrt{(\tau-1)^2-x^2}}\,\d x.
\end{equation}
We proceed by induction on $n$.
Since $\sigma^{(\ast n)}$ is a function by hypothesis, the $(n+1)$-fold convolution can be obtained by convolving that function with the measure $\sigma$, as follows:
\begin{align*}
\sigma^{(\ast (n+1))}(0,\tau)
&=\int_{\H^1}\sigma^{(\ast n)}((0,\tau)-(y,y'))\, \d\sigma(y,y')\\
&=\int_{\R}\sigma^{(\ast n)}(-y,\tau-\langle y\rangle) \frac{\d y}{\langle y\rangle}\\
&=2\int_{0}^\infty\sigma^{(\ast n)}(0,\sqrt{\tau^2-2\tau\langle y\rangle+1}) \frac{\d y}{\langle y\rangle},
\end{align*}
where the Lorentz invariance \eqref{LorentzInv} was again used in the last identity.
Changing variables $u=\langle y\rangle$ as before, we have that:
$$\sigma^{(\ast (n+1))}(0,\tau)
=2\int_{1}^{\frac{\tau^2+1-n^2}{2\tau}} \frac{\sigma^{(\ast n)}(0,\sqrt{\tau^2-2\tau u+1})}{\sqrt{u^2-1}}\,\d u,$$
where the upper limit in the region of integration is due to support considerations involving \eqref{SuppConvolution}. Changing variables $v=\tau^2-2\tau u+1$, we continue to compute:
$$\sigma^{(\ast (n+1))}(0,\tau)
=2\int_{n^2}^{(\tau-1)^2} \frac{\sigma^{(\ast n)}(0,\sqrt{v})}{\sqrt{(\tau^2-v+1)^2-(2\tau)^2}} \,\d v.$$
A final change of variables $x=\sqrt{v}$ yields the desired formula \eqref{RecursiveFormula}. This finishes the proof.
\end{proof}

Identities \eqref{ConvolutionAfterLorentz} and \eqref{RecursiveFormula} for $n=2$ imply the following integral formula for the $3$-fold convolution measure which should be compared to \cite[Lemma 8]{CFOST15}: If $\tau>3$, then
\begin{equation}\label{FormulaSigma3}
\sigma^{(\ast 3)}(0,\tau)
=
16\int_2^{\tau-1}\frac{1}{\sqrt{(\tau+1)^2-x^2}\sqrt{(\tau-1)^2-x^2}}\frac{\d x}{\sqrt{x^2-4}}.
\end{equation}
This integral representation is amenable to a robust numerical treatment with {\it Mathematica}, see Figure \ref{fig:UpperLower} below. 
It is also the starting point for the study of the basic properties of the convolution measure $\sigma^{(\ast 3)}$, which are summarized in the following result.

\begin{lemma}\label{Sigma3Properties}
Let $\sigma$ denote the Lorentz invariant measure on the hyperbola $\H^1$.
Then the function $\tau\mapsto\sigma^{(\ast 3)}(0,\tau)$ is continuous on the half-line $\tau>3$. 
It extends continuously to the boundary of its support, in such a way that
\begin{equation}\label{July13_13:00pm}
\sup_{\tau>3}\sigma^{(\ast 3)}(0,\tau)=\sigma^{(\ast 3)}(0,3)=\frac{2\pi}{\sqrt{3}}\,,
\end{equation}
and this global maximum is strict, i.e.
\begin{equation}\label{July13_11:21am}
\sigma^{(\ast 3)}(0,\tau)<\frac{2\pi}{\sqrt{3}},\text{ for every }\tau>3.
\end{equation}
In particular, this implies that 
\begin{align}\label{July13_12:58pm}
{\bf H}_{1,6}\leq3^{-\frac1{12}}(2\pi)^{\frac12}.
\end{align}
\end{lemma}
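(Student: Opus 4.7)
The plan is to normalize the integration domain in \eqref{FormulaSigma3} via the trigonometric substitution
\[
x(\theta,\tau)=\sqrt{4\cos^2\theta+(\tau-1)^2\sin^2\theta},\qquad\theta\in[0,\pi/2],
\]
which maps $[0,\pi/2]$ bijectively onto $[2,\tau-1]$. A direct computation shows that $\d x/\sqrt{(x^2-4)((\tau-1)^2-x^2)}=\d\theta/x(\theta,\tau)$, recasting \eqref{FormulaSigma3} as
\[
\sigma^{(\ast 3)}(0,\tau)=16\int_0^{\pi/2}\frac{\d\theta}{x(\theta,\tau)\sqrt{(\tau+1)^2-x(\theta,\tau)^2}},\qquad\tau\geq 3.
\]
Since $x\geq 2$ and $(\tau+1)^2-x^2\geq(\tau+1)^2-(\tau-1)^2=4\tau>0$, the integrand is continuous and bounded on $[0,\pi/2]\times[3,\infty)$, so dominated convergence delivers continuity on $(3,\infty)$. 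Furthermore, as $\tau\to 3^+$ the integrand converges uniformly to the constant $1/(4\sqrt{3})$, producing the boundary value
\[
\lim_{\tau\to 3^+}\sigma^{(\ast 3)}(0,\tau)=16\cdot\frac{\pi/2}{4\sqrt{3}}=\frac{2\pi}{\sqrt{3}},
\]
which establishes the continuous extension and pins down $\sigma^{(\ast 3)}(0,3)$.

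For the strict bound \eqref{July13_11:21am}, I would compare integrands pointwise. Setting $s=\sin^2\theta$ and $\alpha=(\tau-3)(\tau+1)$, the identities $x^2=4+\alpha s$ and $(\tau+1)^2-x^2=(\tau-1)(\tau+3)-\alpha s$ lead to
\[
P(\tau,s):=x^2\bigl((\tau+1)^2-x^2\bigr)=4(\tau-1)(\tau+3)+\alpha s\bigl(\tau^2+2\tau-7-\alpha s\bigr).
\]
Since $\alpha s\leq\alpha=\tau^2-2\tau-3$ on $s\in[0,1]$, one has $\tau^2+2\tau-7-\alpha s\geq 4\tau-4>0$ for $\tau>3$, so the second summand is nonnegative; coupled with $4(\tau-1)(\tau+3)>48$ for $\tau>3$, this forces $P(\tau,s)>48$ uniformly in $s\in[0,1]$. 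Hence $\sigma^{(\ast 3)}(0,\tau)<16\int_0^{\pi/2}\d\theta/\sqrt{48}=2\pi/\sqrt{3}$, completing both \eqref{July13_13:00pm} and \eqref{July13_11:21am}.

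Finally, to extract \eqref{July13_12:58pm}, I would invoke the standard Cauchy--Schwarz trick
\[
|f\sigma\ast f\sigma\ast f\sigma(\xi,\tau)|^2\leq\sigma^{(\ast 3)}(\xi,\tau)\cdot\bigl(|f|^2\sigma\ast|f|^2\sigma\ast|f|^2\sigma\bigr)(\xi,\tau),
\]
combined with the Lorentz invariance \eqref{LorentzInv} (which, together with \eqref{SuppConvolution}, ensures $\|\sigma^{(\ast 3)}\|_\infty=\sup_{\tau>3}\sigma^{(\ast 3)}(0,\tau)=2\pi/\sqrt{3}$) and the mass identity $\int|f|^2\sigma\ast|f|^2\sigma\ast|f|^2\sigma=\|f\|_{L^2(\H^1)}^6$. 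This yields $\|f\sigma\ast f\sigma\ast f\sigma\|_{L^2(\R^2)}\leq(2\pi/\sqrt{3})^{1/2}\|f\|_{L^2(\H^1)}^3$; feeding the estimate into \eqref{BestConstant1D} gives ${\bf H}_{1,6}^3\leq 2\pi(2\pi/\sqrt{3})^{1/2}$, which rearranges to \eqref{July13_12:58pm}. The main subtlety I anticipate is verifying that the trigonometric substitution simultaneously resolves the two square-root singularities of \eqref{FormulaSigma3} at $x=2$ and $x=\tau-1$; once that algebraic identity is in hand, the remaining steps reduce to bounded-integrand estimates and elementary polynomial inequalities.
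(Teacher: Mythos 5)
Your proof is correct, and the core of your argument is genuinely different from the paper's. The paper handles the boundary behavior and the strict maximum by constructing explicit majorants and minorants $U(\tau)$ and $L(\tau)$ for the integral \eqref{FormulaSigma3} (each obtained by freezing part of the integrand at its extreme values), squeezing the limit as $\tau\to 3^+$, and then showing that $U$ is strictly decreasing with $U(3)=2\pi/\sqrt{3}$. Your route is to first regularize the domain via the elliptic-type substitution $x^2=4\cos^2\theta+(\tau-1)^2\sin^2\theta$, which simultaneously absorbs both square-root singularities since $x^2-4=\big((\tau-1)^2-4\big)\sin^2\theta$ and $(\tau-1)^2-x^2=\big((\tau-1)^2-4\big)\cos^2\theta$, so that $\d x/\sqrt{(x^2-4)((\tau-1)^2-x^2)}=\d\theta/x$. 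Once the integral lives on the compact interval $[0,\pi/2]$ with bounded integrand, continuity and the value at $\tau=3$ are immediate, and the strict inequality follows from the elementary factorization $P(\tau,s)=4(\tau-1)(\tau+3)+\alpha s(\tau^2+2\tau-7-\alpha s)>48$ for $\tau>3$, avoiding the paper's need to verify monotonicity of an auxiliary function via differentiation. Your approach is arguably cleaner and more self-contained; the paper's approach, being entirely computation-free of substitutions, is perhaps quicker to write down but requires checking $U'(\tau)<0$. The final passage to \eqref{July13_12:58pm} via Cauchy--Schwarz on the triple convolution is essentially identical to the paper's (you phrase it as a pointwise convolution bound, the paper as a sixfold integral; these are the same estimate).
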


\begin{proof}
An application of Lebesgue's Dominated Convergence Theorem to the integral \eqref{FormulaSigma3} establishes that the function $\tau\mapsto\sigma^{(\ast 3)}(0,\tau)$ is continuous for $\tau>3$. We can appeal to the same formula to crudely estimate:
\begin{equation}\label{Sigma3LowerBound}
\sigma^{(\ast 3)}(0,\tau) \geq L(\tau),
\text{ for every } \tau>3,
\end{equation}
where $L$ denotes the lower bound
$$L(\tau):=\frac{16\; I(\tau)}{\sqrt{(\tau+1)^2-2^2}\sqrt{(\tau-1)+(\tau-1)}\sqrt{(\tau-1)+2}},$$
and $I(\tau)$ denotes the integral 
$$I(\tau):=\int_2^{\tau-1}\frac{1}{\sqrt{(\tau-1)-x}}\frac{\d x}{\sqrt{x-2}}.$$
Via the affine change of variables $x\mapsto(\tau-3)x+2$, we see that  $I(\tau)=\pi$, for every $\tau>3$. Substituting in \eqref{Sigma3LowerBound}, we have that
\begin{equation}\label{UpperBoundValueat3}
\liminf_{\tau\to 3^+} \sigma^{(\ast 3)}(0,\tau)\geq \frac{2\pi}{\sqrt{3}}.
\end{equation}
Crude upper bounds of similar flavor yield 
\begin{equation}\label{Sigma3UpperBound}
\sigma^{(\ast 3)}(0,\tau)\leq U(\tau), \text{ for every } \tau>3,
\end{equation}
where the upper bound $U$ is given by
\begin{equation*}
U(\tau):=\frac{16\pi}{\sqrt{(\tau+1)^2-(\tau-1)^2}\,\sqrt{(\tau-1)+2}\,\sqrt{2+2}}.
\end{equation*}
Incidentally, note that this implies $\sigma^{(\ast 3)}(0,\tau)\lesssim \tau^{-1}$, for large values of $\tau$.
It follows from \eqref{Sigma3UpperBound} that
\begin{equation}\label{LowerBoundValueat3}
\limsup_{\tau\to 3^+} \sigma^{(\ast 3)}(0,\tau)\leq  \frac{2\pi}{\sqrt{3}}.
\end{equation}
Estimates \eqref{UpperBoundValueat3} and \eqref{LowerBoundValueat3} together imply 
$$\lim_{\tau\to 3^+} \sigma^{(\ast 3)}(0,\tau)=  \frac{2\pi}{\sqrt{3}}.$$
Noting that the upper bound $U$ satisfies $U(3)=\frac{2\pi}{\sqrt{3}}$, and that 
$$U'(\tau)=-\frac{2\pi (2\tau+1)}{\tau^{\frac 32}(\tau+1)^{\frac 32}}<0\  \text{ for every }\tau>3\,,$$
we arrive at \eqref{July13_11:21am}.

\begin{figure}
  \centering
  \includegraphics[height=5cm]{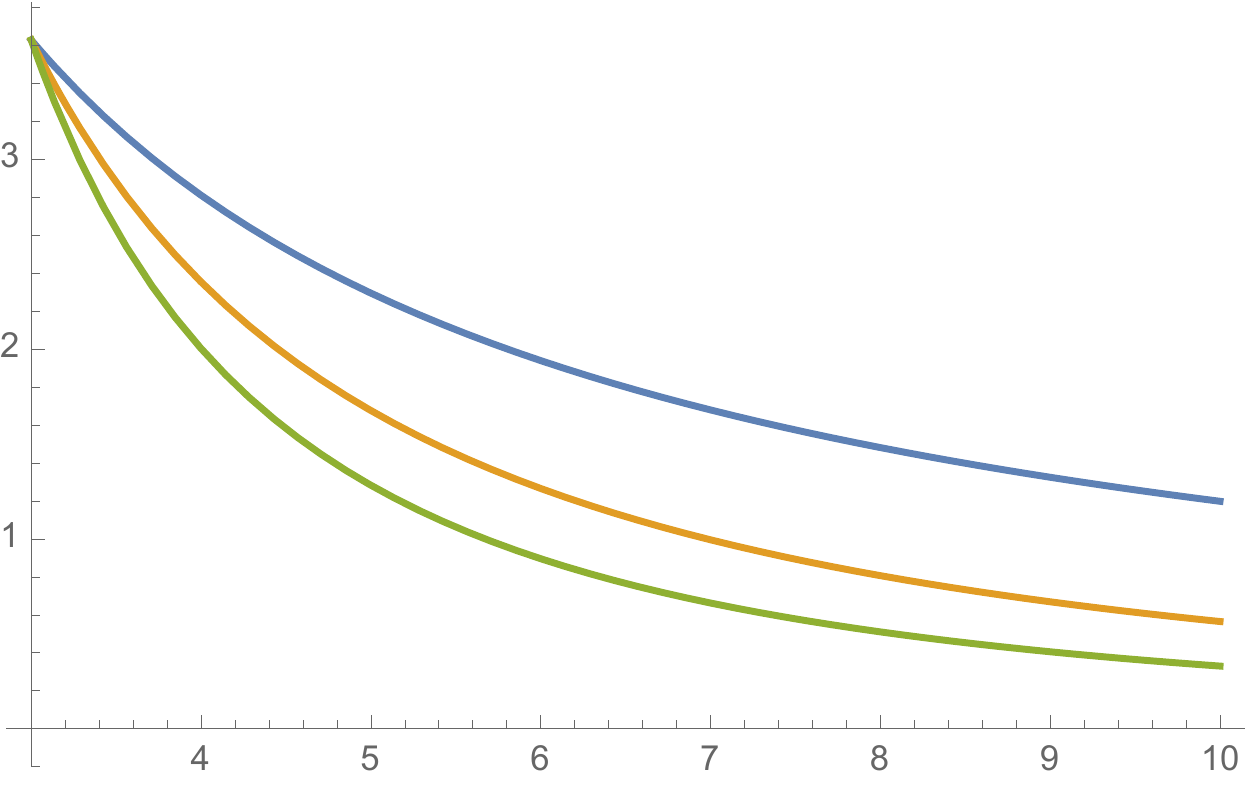} 
    \caption{The lower bound $L(\tau)$ and the upper bound $U(\tau)$ for the function $\tau\mapsto\sigma^{(\ast 3)}(0,\tau)$ on the half-line $\tau>3$.}
\label{fig:UpperLower}
\end{figure}

Finally, letting $\boldsymbol{\delta}_2$ denote the two-dimensional Dirac delta, we have
\begin{align*}
& \|f\sigma\ast f\sigma\ast f\sigma\|_{L^2(\R^2)}^2  = \int_{(\R^2)^6} f(x_1)\,f(x_2)\,f(x_3) \overline{f(x_4)}\,\overline{f(x_5)}\,\overline{f(x_6)} \, \d\Sigma,
\end{align*}
where $\d\Sigma = \d\Sigma(x_1,\ldots, x_6) = \boldsymbol{\delta}_2(x_1+x_2+x_3 - x_4 - x_5-x_6)\, \d\sigma(x_1)\ldots\d\sigma(x_6)$. An application of the Cauchy--Schwarz inequality leads to 
\begin{align}\label{July13_13:01pm}
\begin{split}
& \|f\sigma\ast f\sigma\ast f\sigma\|_{L^2(\R^2)}^2  \leq \int_{(\R^2)^6} |f(x_1)|^2\,|f(x_2)|^2\,|f(x_3)|^2\, \d\Sigma\\
&  \ \ \ \ \ \ \ = \int_{(\R^2)^3} |f(x_1)|^2\,|f(x_2)|^2\,|f(x_3)|^2\,\sigma^{(\ast 3)}(x_1+x_2+x_3) \,\d\sigma(x_1)\,\d\sigma(x_2)\,\d\sigma(x_3)\\
& \ \ \ \ \ \ \ \leq \sup_{x \in \R^2} \sigma^{(\ast 3)}(x) \,\|f\|_{L^2(\H^1)}^6.
\end{split}
\end{align}
Estimate \eqref{July13_12:58pm} now follows from \eqref{BestConstant1D}, \eqref{July13_13:00pm} and \eqref{July13_13:01pm}. This completes the proof of the lemma.
\end{proof}

Two-dimensional counterparts of the results from this section were obtained in \cite[Lemma 5.1]{Qu15}. 
We record them here.

\begin{lemma}[cf. \cite{Qu15}]\label{2DConvolutions}
Let $\sigma$ denote the Lorentz invariant measure on the hyperboloid $\H^2$.
Then, for every $(\xi,\tau)\in\R^2\times\R$,
\begin{itemize}
\item[(a)]$(\sigma\ast\sigma)(\xi,\tau)=\frac{2\pi}{\sqrt{\tau^2-|\xi|^2}}\one_{\{\tau\geq\sqrt{2^2+|\xi|^2}\}}(\xi,\tau),$
\item[(b)]$(\sigma\ast\sigma\ast\sigma)(\xi,\tau)=(2\pi)^2\Big(1-\frac{3}{\sqrt{\tau^2-|\xi|^2}}\Big)\one_{\{\tau\geq\sqrt{3^2+|\xi|^2}\}}(\xi,\tau).$
\end{itemize}
\end{lemma}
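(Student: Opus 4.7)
The plan is to mirror the strategy from the one-dimensional case treated in Lemma \ref{ConvolutionBasicProperties}: reduce to the ``canonical'' point $(0,\tau)$ using the Lorentz invariance property \eqref{LorentzInv}, expand the convolution as an integral against $\d\sigma$, exploit the radial symmetry in the spatial variable to reduce to a one-variable integral, and finally use a sequence of changes of variables that collapses the delta constraint.

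Concretely, for part (a) I would first observe that by \eqref{LorentzInv} it is enough to prove $(\sigma\ast\sigma)(0,\tau)=\tfrac{2\pi}{\tau}\,\one_{\tau\geq 2}$. Unfolding the definition gives
\begin{equation*}
(\sigma\ast\sigma)(0,\tau)=\int_{\R^2}\ddirac{\tau-2\langle y\rangle}\,\frac{\d y}{\langle y\rangle^2}.
\end{equation*}
Passing to polar coordinates $y=(r\cos\theta,r\sin\theta)$ yields a factor of $2\pi$, after which the substitution $u=\langle r\rangle$ (so that $\tfrac{r\,\d r}{\langle r\rangle^2}=\tfrac{\d u}{u}$) together with $v=2u$ reduces the expression to $2\pi\int_2^\infty \boldsymbol{\delta}(\tau-v)\,v^{-1}\d v$, which gives the claimed formula. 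Applying \eqref{LorentzInv} to restore the general $\xi$ finishes part (a), with the support condition $\tau\geq\sqrt{4+|\xi|^2}$ coming from $\mathcal{P}_{2,2}$ in \eqref{SuppConvolution}.

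For part (b), I would iterate this scheme: since part (a) makes $\sigma\ast\sigma$ an honest function, we can compute
\begin{equation*}
(\sigma^{(\ast 3)})(0,\tau)=\int_{\R^2}(\sigma\ast\sigma)(-y,\tau-\langle y\rangle)\,\frac{\d y}{\langle y\rangle}.
\end{equation*}
The key algebraic identity $(\tau-\langle y\rangle)^2-|y|^2=\tau^2-2\tau\langle y\rangle+1$ lets us plug in part (a) directly, giving an integrand of the form $\tfrac{2\pi}{\sqrt{\tau^2-2\tau\langle y\rangle+1}}\,\tfrac{1}{\langle y\rangle}\,\one_{\{\tau^2-2\tau\langle y\rangle+1\geq 4\}}$. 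Polar coordinates (another $2\pi$) and the substitution $u=\langle r\rangle$, $\d u=\tfrac{r}{\langle r\rangle}\,\d r$, produce
\begin{equation*}
(\sigma^{(\ast 3)})(0,\tau)=(2\pi)^2\int_1^{(\tau^2-3)/(2\tau)}\frac{\d u}{\sqrt{\tau^2-2\tau u+1}},
\end{equation*}
with the upper limit coming from the support constraint. A final change $v=\tau^2-2\tau u+1$ turns this into an elementary square-root integral evaluating to $(2\pi)^2(1-3/\tau)$, and \eqref{LorentzInv} then extends the identity to general $(\xi,\tau)$.

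There is no serious obstacle here — the whole argument is a careful bookkeeping exercise. The only points that require some attention are (i) correctly tracking the upper endpoint of integration after each substitution, which is governed by the support region \eqref{SuppConvolution} rather than by the $\boldsymbol{\delta}$, and (ii) verifying that the reduction via $L\in\L^+$ taking a general $(\xi,\tau)\in\mathcal{P}_{2,n}$ to $(0,\sqrt{\tau^2-|\xi|^2})$ preserves the convolution, which is precisely the content of \eqref{LorentzInv} and follows from $|\det L|=1$ together with $L^\ast\sigma=\sigma$.
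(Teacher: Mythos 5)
Your computation is correct: the Lorentz reduction to $(0,\tau)$, polar coordinates, and the substitutions $u=\langle r\rangle$ followed by $v=2u$ (resp.\ $v=\tau^2-2\tau u+1$) yield exactly the stated formulas, with the upper integration limit in part (b) correctly read off from the support condition $\tau^2-2\tau\langle y\rangle+1\geq 4$. Note, however, that the paper does not prove this lemma at all — it simply records it with a citation to Quilodr\'an [Qu15, Lemma~5.1] — so you are re-deriving a cited result; that said, your argument is the natural one, directly parallels the paper's own one-dimensional computation in Lemma~\ref{ConvolutionBasicProperties}, and gives a clean, self-contained verification.
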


\section{Nonexistence of extremizers at the endpoint $(d,p)=(1,6)$}\label{sec:Nonexistence}
This section is devoted to the proof of Theorem \ref{Thm1}. After studying the behavior of $\sigma^{(\ast 3)}$ in \S \ref{sec:Convolutions}, the material in this section is partly motivated by the outline of \cite[Appendices B and C]{Qu15}. The heart of the matter lies in the construction of an explicit extremizing sequence for inequality \eqref{ExtensionInequality}, which is the content of the next result.

\begin{proposition}\label{ExtremizingSequence}
Let $\sigma$ denote the Lorentz invariant measure on the hyperbola $\H^1$.
Given $a>0$, let $f_a(y)=e^{-a\langle y\rangle}$, $y\in\R$. Then:
\begin{itemize}
\item[(a)] For every $n\in\N$ we have 
$$(f_a\sigma)^{(\ast n)}(\xi,\tau)=e^{-a\tau}\sigma^{(\ast n)}(\xi,\tau).$$
\item[(b)]
The function $a\mapsto \sqrt{a} \,e^{2a} \|f_a\|_{L^2(\H^1)}^2$ is bounded on the half-line $a>0$, and satisfies 
\begin{equation}\label{BehaviorAtInfty}
\lim_{a\to\infty}\sqrt{a}\, e^{2a}\|f_a\|_{L^2(\H^1)}^2=\sqrt{\pi}.
\end{equation}
\item[(c)] The sequence $\{f_a\}_{a\in\N}$ satisfies
\begin{equation}\label{PreBestConstant}
\lim_{a\to\infty}\frac{\|f_a\sigma\ast f_a\sigma\ast f_a\sigma\|_{L^2(\R^2)}^2}{\|f_a\|_{L^2(\H^1)}^6}
=\frac{2\pi}{\sqrt{3}},
\end{equation}
and is extremizing for inequality \eqref{ExtensionInequality} when $(d,p)=(1,6)$, as 
$a\to\infty$. In particular,
\begin{align*}
{\bf H}_{1,6} = 3^{-\frac1{12}}(2\pi)^{\frac12}.
\end{align*}
\end{itemize}
\end{proposition}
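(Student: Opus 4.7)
The plan is to tackle the three parts in order, with the technical heart in part (c). For part (a), one argues by induction on $n$, using the key observation that on the support of $\sigma^{(\ast n)}$ a generic point $(\xi,\tau) = (y_1+\cdots+y_n,\, \langle y_1\rangle+\cdots+\langle y_n\rangle)$ satisfies $\tau = \langle y_1\rangle+\cdots+\langle y_n\rangle$. Consequently, $f_a(y_1)\cdots f_a(y_n) = e^{-a(\langle y_1\rangle+\cdots+\langle y_n\rangle)} = e^{-a\tau}$ on the effective integration set and factors out of the convolution integral, leaving precisely $\sigma^{(\ast n)}(\xi,\tau)$ behind.

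For part (b), the change of variables $u = \langle y\rangle$ transforms $\|f_a\|_{L^2(\H^1)}^2$ into $2\int_1^\infty e^{-2au}/\sqrt{u^2-1}\, d u$, which is a form of the modified Bessel function $2K_0(2a)$. A further substitution $u = 1 + v/(2a)$ extracts the prefactor $e^{-2a}/\sqrt{a}$ and, via dominated convergence with the integrable majorant $e^{-v}/\sqrt{v}$, produces the Gamma-function limit $\int_0^\infty e^{-v}v^{-1/2}\, d v = \sqrt{\pi}$, which is exactly \eqref{BehaviorAtInfty}. Boundedness on all of $a>0$ follows because the logarithmic blowup of $K_0$ near zero is subdued by the $\sqrt{a}$ factor, while away from $0$ the function is smooth and bounded by the asymptotics just derived.

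For part (c), first use part (a) to write
$$\|f_a\sigma\ast f_a\sigma\ast f_a\sigma\|_{L^2(\R^2)}^2 = \int_{\R^2} e^{-2a\tau}\bigl(\sigma^{(\ast 3)}(\xi,\tau)\bigr)^2\, d\xi\, d\tau.$$
Next, parametrize the support of $\sigma^{(\ast 3)}$ via the Lorentz-invariant coordinates $\xi = s\sinh\theta$, $\tau = s\cosh\theta$ (Jacobian $s$, with $s\geq 3$); invoke \eqref{LorentzInv} to replace $\sigma^{(\ast 3)}(\xi,\tau)$ by $\sigma^{(\ast 3)}(0,s)$, and integrate the factor $e^{-2as\cosh\theta}$ in $\theta$ to obtain $2K_0(2as)$, yielding
$$\|f_a\sigma\ast f_a\sigma\ast f_a\sigma\|_{L^2(\R^2)}^2 = \int_3^\infty 2s\bigl(\sigma^{(\ast 3)}(0,s)\bigr)^2 K_0(2as)\, d s.$$
A Laplace-type analysis now concentrates this integral at the boundary $s=3$, where Lemma \ref{Sigma3Properties} gives $\sigma^{(\ast 3)}(0,3) = 2\pi/\sqrt{3}$; substituting $s = 3 + r/a$ and using the asymptotic $K_0(x) \sim \sqrt{\pi/(2x)}\, e^{-x}$ produces, together with part (b), the limit \eqref{PreBestConstant}. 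Inserting this into \eqref{BestConstant1D} yields the lower bound ${\bf H}_{1,6}^3 \geq (2\pi)^{3/2}\, 3^{-1/4}$, which matches \eqref{July13_12:58pm} and pins down the optimal constant; that $\{f_a\}$ is extremizing is then immediate.

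The main obstacle is the Laplace analysis in part (c): since $\sigma^{(\ast 3)}(0,s)$ is merely continuous (not smooth) at the boundary $s=3$, standard full Laplace expansions are unavailable. One must rely on the continuous extension established in \eqref{July13_13:00pm}, the strict-maximum assertion \eqref{July13_11:21am}, and the tail estimate $\sigma^{(\ast 3)}(0,s)\lesssim s^{-1}$ noted in the proof of Lemma \ref{Sigma3Properties}. Together with uniform asymptotics of $K_0$ on the relevant scale, these ingredients secure the dominated convergence needed to pass to the limit under the integral.
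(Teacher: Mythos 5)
Your argument is correct and follows essentially the same route as the paper: part (a) rests on the same observation that the additive exponential factors out of the convolution; part (b) reduces to $2K_0(2a)$ and its known asymptotics (you prove the limit directly by substitution and dominated convergence rather than citing Watson, which is fine); and in part (c) you pass to Lorentz-invariant coordinates, recognize the angular integral as $2K_0(2as)$, and concentrate at $s=3$. The only cosmetic divergence is that you organize the final Laplace-type limit via the substitution $s=3+r/a$ and dominated convergence, whereas the paper phrases it as an approximate-identity argument with $\varphi(t)=e^{-2|t|}$; both are the same calculation and both correctly invoke continuity of $\sigma^{(\ast 3)}(0,\cdot)$ at $3^+$ together with the decay $\sigma^{(\ast 3)}(0,s)\lesssim s^{-1}$ to justify passage to the limit.
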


\begin{proof}
The proof of (a) is analogous to part of the proof of \cite[Lemma B.1]{Qu15}. 
We present the details for the convenience of the reader.
Letting $g_a(\xi,\tau)=e^{-a\tau}$, we have that $(f_a\sigma)^{(\ast n)}=(g_a\sigma)^{(\ast n)}$.
Therefore,
$$(f_a\sigma)^{(\ast n)}(\xi,\tau)=(g_a\sigma)^{(\ast n)}(\xi,\tau)=g_a(\xi,\tau) \sigma^{(\ast n)}(\xi,\tau)=e^{-a\tau} \sigma^{(\ast n)}(\xi,\tau),$$
where the second identity follows from the fact that $g_a$ is the exponential of a linear function.

For part (b), change variables $\langle y\rangle=\cosh t$ to compute
\begin{equation}\label{HrelK}
\|f_a\|_{L^2(\H^1)}^2=\int_\R e^{-2a\langle y\rangle} \frac{\d y}{\langle y\rangle}=2\int_0^\infty e^{-2a\cosh t} \d t=2K_0(2a).
\end{equation}
Here, the modified Bessel function of the second kind $K_\nu$ is given for $\Re(z)>0$ by
\begin{equation*}
K_\nu(z)=\int_0^\infty \exp(-z\cosh t)\cosh(\nu t) \,\d t.
\end{equation*}
Claim \eqref{BehaviorAtInfty} boils down to the well-known fact
\begin{equation}\label{KBesselAtInfty}
\lim_{a\to\infty}\sqrt{a} \,e^{2a} K_0(2a)=\frac{\sqrt{\pi}}{2},
\end{equation}
see e.g. \cite[\S 7.34 (1)]{Wa66}.
We finish the proof of part (b) by invoking the facts  that $K_0(x)\lesssim |\log(x)|$ as $x\to 0^+$ (see e.g. \cite[formula (9.6.8) on p. 375]{AS70}), and that $K_0$ monotonically decreases on the positive half-axis, which follows directly from the definition of $K_0$. Figure \ref{fig:ModBessel} illustrates these facts.

We next turn to part (c). Part (a) implies
\begin{align*}
\|f_a\sigma\ast f_a\sigma\ast f_a\sigma\|_{L^2(\R^2)}^2
=\int_{\mathcal{P}_{1,3}} e^{-2a\tau}(\sigma^{(\ast 3)}(\xi,\tau))^2\,\d\xi\,\d\tau,
\end{align*}
where the support region $\mathcal{P}_{1,3}$ was defined in \eqref{SuppConvolution}.
We perform the change of variables $\phi(\xi,\tau)=(\xi,\sqrt{\tau^2+\xi^2})$, which has Jacobian determinant
\begin{displaymath}
J(\phi)(\xi,\tau)=\det\left(
\begin{array}{cc}
1 & \frac{\xi}{\sqrt{\tau^2+\xi^2}}\\
0 & \frac{\tau}{\sqrt{\tau^2+\xi^2}}
\end{array}\right)
=\frac{\tau}{\sqrt{\tau^2+\xi^2}}.
\end{displaymath}
As a consequence,
\begin{align}
\|f_a\sigma\ast f_a\sigma\ast f_a\sigma\|_{L^2(\R^2)}^2
&=\int_{\phi^{-1}(\mathcal{P}_{1,3})} e^{-2a\sqrt{\tau^2+\xi^2}}\Big(\sigma^{(\ast 3)}\big(\xi,\sqrt{\tau^2+\xi^2}\big)\Big)^2 \frac{\tau}{\sqrt{\tau^2+\xi^2}} \,\d\xi\,\d\tau\notag\\
&=\int_3^{\infty} \tau\big(\sigma^{(\ast 3)}(0,\tau)\big)^2 \Big(\int_\R\frac{ e^{-2a\sqrt{\tau^2+\xi^2}}}{\sqrt{\tau^2+\xi^2}} \d\xi\Big)\d\tau,\label{PreH}
\end{align}
where in the last identity we used the Lorentz invariance \eqref{LorentzInv} of the convolution $\sigma^{(\ast 3)}$, together with the fact that $\phi^{-1}(\mathcal{P}_{1,3})=\R\times (3,\infty).$ 
Define $H(a):=\|f_a\|_{L^2(\H^1)}^2$.
Recognizing the inner integral in \eqref{PreH} as the quantity $H(a\tau)$, we have that
\begin{equation}\label{July13_11:00am}
\frac{\|f_a\sigma\ast f_a\sigma\ast f_a\sigma\|_{L^2(\R^2)}^2}{\|f_a\|_{L^2(\H^1)}^6}=\int_3^{\infty} \tau(\sigma^{(\ast 3)}(0,\tau))^2 \frac{H(a\tau)}{H(a)^3}\,\d\tau.
\end{equation}
We will be interested in the regime where $a\to\infty$, for which the approximation 
\begin{equation}\label{July13_11:01am}
\lim_{a \to \infty} \bigg(\frac{H(a\tau)}{H(a)^3} \bigg) \,\bigg/\, \bigg(\frac a{\pi}\frac{e^{-2a(\tau-3)}}{\sqrt{\tau}}\bigg) = 1
\end{equation}
follows from \eqref{HrelK} and \eqref{KBesselAtInfty}.
On the other hand, we have noted in the course of the proof of Lemma \ref{Sigma3Properties} that
$$\sigma^{(\ast 3)}(0,\tau)\lesssim\frac1{\tau},\text{ for }\tau >3.$$
From this and support considerations, it follows that the function $\tau\mapsto\sqrt{\tau} (\sigma^{(\ast 3)}(0,\tau))^2$ is bounded on the positive half-axis. It is also continuous there, except for a jump discontinuity at $\tau=3$.
Given an even function $\varphi:\R\to[0,\infty)$ satisfying $\int_\R \varphi=1$, we have
\begin{align*}
\lim_{a\to\infty}\int_\R \sqrt{|\tau|} (\sigma^{(\ast 3)}(0,\tau))^2 \,a\,\varphi(a(\tau-3))\,\d \tau
&=\sqrt{3}\,\frac{(\sigma^{(\ast 3)}(0,3^-))^2+(\sigma^{(\ast 3)}(0,3^+))^2}{2}\\
&=\sqrt{3}\,\frac{0^2+(\frac{2\pi}{\sqrt{3}})^2}{2}
=\frac{2\pi^2}{\sqrt{3}}.
\end{align*}
This follows from the fact that $\{a\varphi(a\;\cdot)\}_{a\in\N}$ constitutes an approximate identity sequence, as $a\to\infty.$
Specializing to $\varphi(t)=e^{-2|t|}$, and using \eqref{July13_11:00am} and \eqref{July13_11:01am}, we check that \eqref{PreBestConstant} holds. From \eqref{BestConstant1D} and \eqref{July13_12:58pm} it follows that the sequence $\{f_a\}_{a\in\N}$ is extremizing for inequality \eqref{ExtensionInequality}, and
\begin{align*}
{\bf H}_{1,6} = 3^{-\frac1{12}}(2\pi)^{\frac12}.
\end{align*}
This completes the proof of the proposition (and of part of Theorem \ref{Thm1}).
\end{proof}
\begin{figure}
  \centering
  \includegraphics[height=4cm]{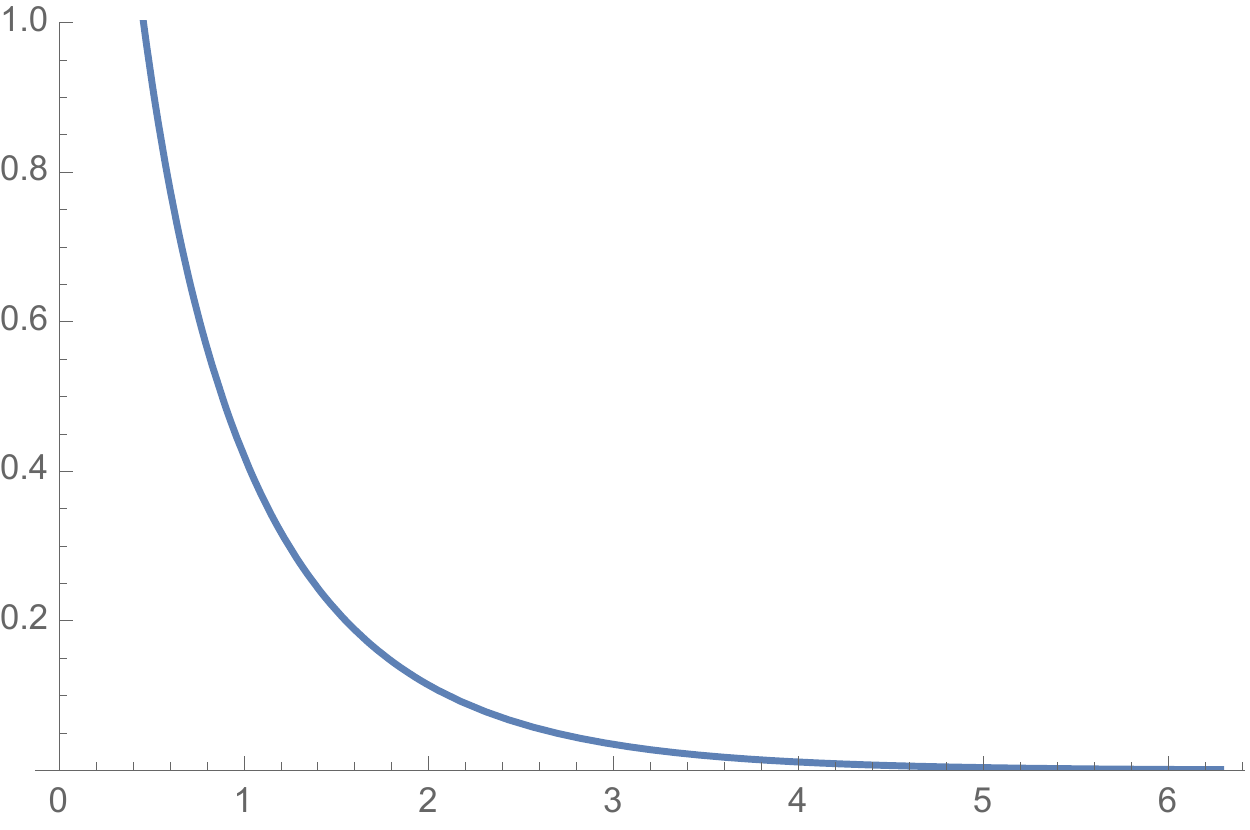} \ \ \ \ 
    \includegraphics[height=4cm]{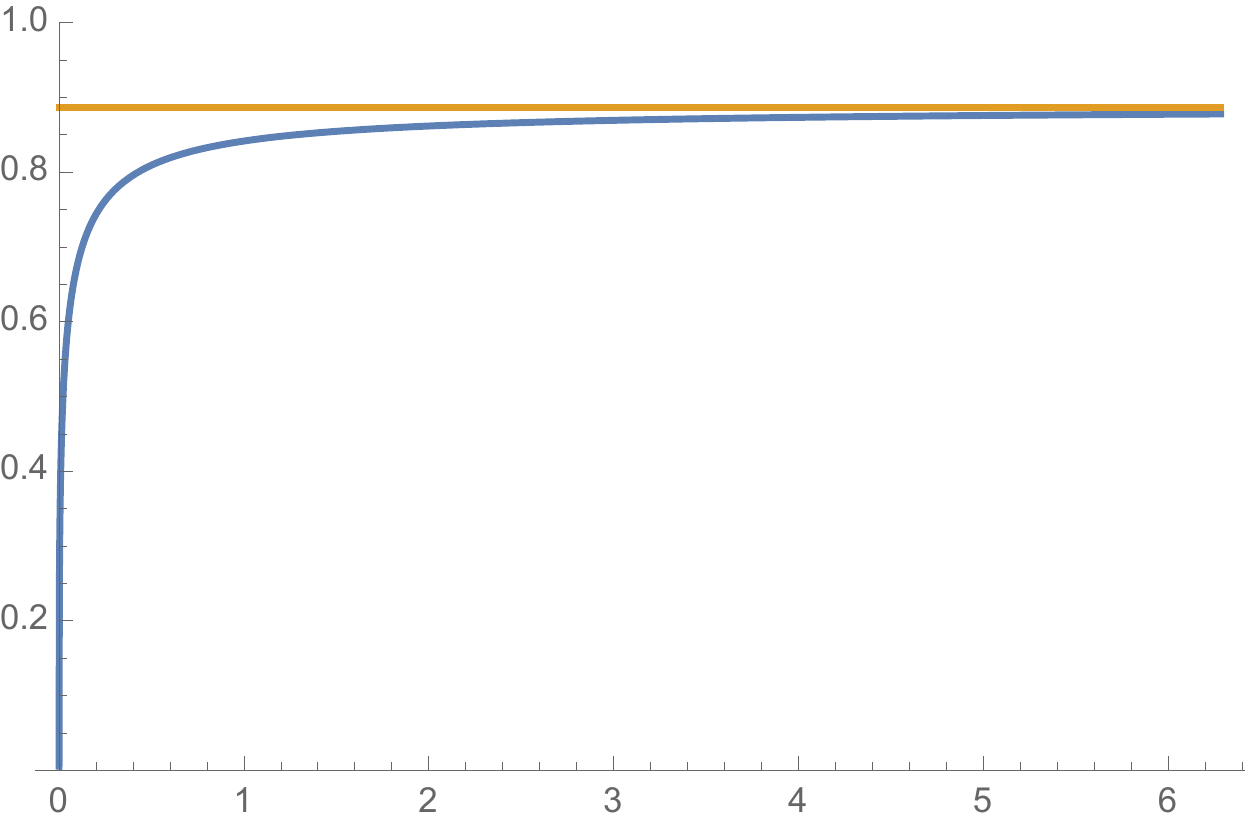} 
    \caption{The function $x\mapsto K_0(x)$, and the function $x\mapsto \sqrt{x}e^{2x}K_0(2x)$ together with its horizontal asymptote $y=\frac{\sqrt{\pi}}2$.}
\label{fig:ModBessel}
\end{figure}
To prove that extremizers do not exist, we invoke the useful observation from \cite[Corollary 4.3]{Qu15}, which we record here.
\begin{lemma}[cf. \cite{Qu15}]\label{QuilodranLemmaBestConstant}
Let $(d,p)$ satisfy \eqref{AdmissibleRange}, and suppose that $p=2n$ is an even integer. 
Suppose that 
$${\bf H}_{d,p}=(2\pi)^{\frac{d+1}p}\|\sigma^{(\ast n)}\|_{L^\infty(\R^{d+1})}^{\frac 1p},$$ 
and that $\sigma^{(\ast n)}(\xi,\tau)<\|\sigma^{(\ast n)}\|_{L^\infty(\R^{d+1})}$ for a.e. $(\xi,\tau)$ in the support of $\sigma^{(\ast n)}$. 
Then extremizers for inequality \eqref{ExtensionInequality} do not exist for the pair $(d,p)$.
\end{lemma}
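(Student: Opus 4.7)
The plan is to first reduce inequality \eqref{ExtensionInequality} to an $L^2$-bound on the $n$-fold convolution. Since $p=2n$ is even, Plancherel's theorem yields
\[
\|Tf\|_{L^p(\R^{d+1})}^{p}=\|(\widehat{f\sigma})^n\|_{L^2(\R^{d+1})}^2=(2\pi)^{d+1}\|(f\sigma)^{(\ast n)}\|_{L^2(\R^{d+1})}^2.
\]
The Cauchy--Schwarz chain already displayed in \eqref{July13_13:01pm} for the case $n=3$ adapts verbatim to general $n$: expanding $\|(f\sigma)^{(\ast n)}\|_{L^2}^{2}$ as a symmetric integral of $f(x_1)\cdots f(x_n)\,\overline{f(x_{n+1})\cdots f(x_{2n})}$ against the measure $\boldsymbol{\delta}_{d+1}(x_1+\ldots+x_n-x_{n+1}-\ldots-x_{2n})\prod\d\sigma(x_i)$, applying Cauchy--Schwarz to split the integrand into two symmetric halves, and integrating out the second block of variables so as to produce the density of $\sigma^{(\ast n)}$, one reaches
\[
\|(f\sigma)^{(\ast n)}\|_{L^2}^{2}\leq\int_{(\H^d)^n}\prod_{i=1}^n|f(x_i)|^2\,\sigma^{(\ast n)}(x_1+\ldots+x_n)\prod_{i=1}^n\d\sigma(x_i)\leq\|\sigma^{(\ast n)}\|_{L^\infty}\|f\|_{L^2(\H^d)}^{2n}.
\]
Combined with the Plancherel identity and the definition of ${\bf H}_{d,p}$, this already yields ${\bf H}_{d,p}\leq(2\pi)^{(d+1)/p}\|\sigma^{(\ast n)}\|_{L^\infty}^{1/p}$, which is an equality by the first hypothesis.

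I would then examine the consequences of equality for a putative extremizer $f\in L^2(\H^d)\setminus\{0\}$. The second inequality above becoming an equality forces the nonnegative integrand
\[
\big(\|\sigma^{(\ast n)}\|_{L^\infty}-\sigma^{(\ast n)}(x_1+\ldots+x_n)\big)\prod_{i=1}^n|f(x_i)|^2
\]
to vanish for $\sigma^{\otimes n}$-a.e.\ $(x_1,\ldots,x_n)\in(\H^d)^n$. Setting $A:=\{y\in\H^d:f(y)\neq 0\}$, which has $\sigma(A)>0$, this forces the measure $(\one_A\sigma)^{(\ast n)}$---the pushforward of $\sigma^{\otimes n}|_{A^n}$ under the $n$-fold sum---to be concentrated on the set $E:=\{z\in\R^{d+1}:\sigma^{(\ast n)}(z)=\|\sigma^{(\ast n)}\|_{L^\infty}\}$, which by the second hypothesis has Lebesgue measure zero.

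The contradiction is then immediate: since $p=2n$ lies in the admissible range \eqref{AdmissibleRange}, we must have $n\geq 2$, so $(\one_A\sigma)^{(\ast n)}$ is absolutely continuous with respect to Lebesgue measure on $\R^{d+1}$ and is pointwise dominated by $\sigma^{(\ast n)}$; concentration on a Lebesgue-null set would force it to vanish identically, against the total-mass identity $\int_{\R^{d+1}}(\one_A\sigma)^{(\ast n)}\,\d z=\sigma(A)^n>0$. The main technical point---and the only subtlety worth dwelling on---is the absolute continuity and the pointwise domination $(\one_A\sigma)^{(\ast n)}\leq\sigma^{(\ast n)}$, both of which reduce to the elementary fact that $0\leq\one_A\leq 1$ combined with the monotonicity of convolution for nonnegative measures.
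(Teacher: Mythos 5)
The paper itself does not prove this lemma: it cites it verbatim from \cite[Corollary 4.3]{Qu15} and uses it as a black box, so there is no in-paper proof to compare your proposal against. That said, your argument is a complete and correct reconstruction of the cited result, and it follows exactly the natural line: Plancherel reduces matters to $\|(f\sigma)^{(\ast n)}\|_{L^2}$; the Cauchy--Schwarz step against the delta produces $\sigma^{(\ast n)}$ (the paper performs precisely this computation for $n=3$ in display \eqref{July13_13:01pm}); equality in the chain forces $(\one_A\sigma)^{(\ast n)}$, with $A=\{f\neq 0\}$, to concentrate on the Lebesgue-null level set $\{\sigma^{(\ast n)}=\|\sigma^{(\ast n)}\|_\infty\}$; and absolute continuity for $n\geq 2$ together with total mass $\sigma(A)^n>0$ yields the contradiction. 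The only small point worth stating explicitly (which you implicitly use) is that the pushforward of $\sigma^{\otimes n}|_{A^n}$ under the sum map is exactly $(\one_A\sigma)^{(\ast n)}$, so the pointwise vanishing of $(\|\sigma^{(\ast n)}\|_\infty-\sigma^{(\ast n)}\circ\Phi)\prod|f(x_i)|^2$ for $\sigma^{\otimes n}$-a.e.\ $(x_1,\ldots,x_n)$ does indeed translate into concentration of that measure on the null set; with this spelled out the proof is airtight.
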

Armed with Lemma \ref{Sigma3Properties}, Proposition \ref{ExtremizingSequence} ( \!\!c) and Lemma \ref{QuilodranLemmaBestConstant}, it is now an easy matter to finish the proof of Theorem \ref{Thm1}.

We end this section with the following remark. The extremizing sequence $\{f_a\}_{a\in\N}$ defined in the statement of Proposition \ref{ExtremizingSequence} concentrates at the vertex of the hyperbola. It is sensible to ask about the behaviour of general extremizing sequences. Following the arguments from \cite[\S 6]{Qu15} or \cite[Theorem 1.5]{OSQ16}, one can show that {\it every} extremizing sequence for inequality \eqref{ExtensionInequality} when $(d,p)=(1,6)$ concentrates at the vertex of the hyperbola, possibly after applying the symmetries of the problem and after extracting a subsequence. We omit the details.

\section{Special cap}\label{sec:SpecialCap}
In this section, we seek to locate a distinguished cap which carries a non-trivial amount of $L^2$ mass.
This is essential to start gaining some control on compactness properties of extremizing sequences. 
In the one-dimensional situation, we establish a refinement of the Fourier extension inequality. 
In the two-dimensional setting, we reduce matters to the study of bilinear interactions in the lower endpoint case.
\subsection{One-dimensional setting}
This subsection in partially inspired by \cite[\S 3]{Qu13} (see also \cite[\S 4]{KSV12}). To study the interaction between the distinct caps from the family $\{\C_k\}_{k\in\Z}$, defined in \eqref{July21_7:34pm}, we make use of the following standard result on fractional integration.
\begin{lemma}[Hardy--Littlewood--Sobolev]\label{HLS}
Given $r,s\in(1,\infty)$ with $\frac1r+\frac1s>1$, let $\alpha=2-\frac1r-\frac1s$. For any $f\in L^r(\R)$ and $g\in L^s(\R)$,
$$\int_{-\infty}^\infty\int_{-\infty}^\infty |f(x)| |x-y|^{-\alpha} |g(y)|\, \d x\,\d y \lesssim_{r,s} \|f\|_{L^r(\R)}\|g\|_{L^s(\R)}.$$
\end{lemma}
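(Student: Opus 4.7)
The plan is to identify this as the classical one-dimensional Hardy--Littlewood--Sobolev fractional integration inequality and present the standard derivation, since no new ideas are needed beyond what is already in the literature. I would first dualize: the stated bilinear bound is equivalent to the boundedness of the Riesz-type operator
\[
I_\alpha h(y) := \int_\R |x-y|^{-\alpha}\, h(x)\,\d x
\]
from $L^r(\R)$ into $L^{s'}(\R)$, with $1/s' = 1 - 1/s$. I would then observe that the identity $\alpha = 2 - 1/r - 1/s$ is precisely the scaling relation that makes $I_\alpha$ dilation-homogeneous between these spaces, that the hypothesis $1/r + 1/s > 1$ forces $\alpha \in (0,1)$, and that $r,s \in (1,\infty)$ places $s'$ strictly between $1$ and $\infty$, well inside the admissible range of exponents for the Hardy--Littlewood maximal function.

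The core step is the pointwise bound
\[
|I_\alpha h(y)| \lesssim_{r,s} \big(Mh(y)\big)^{r/s'}\, \|h\|_{L^r(\R)}^{1-r/s'},
\]
where $M$ denotes the Hardy--Littlewood maximal operator. To obtain it, I would fix a parameter $R > 0$ and split the defining integral into the local piece over $|x-y| < R$ and the tail over $|x-y| \geq R$. A standard layer-cake decomposition bounds the local piece by a constant multiple of $R^{1-\alpha}\, Mh(y)$, while Hölder's inequality bounds the tail by a constant multiple of $R^{1/r' - \alpha}\, \|h\|_{L^r(\R)}$; the tail integral $\int_{|u|\geq R} |u|^{-\alpha r'}\,\d u$ converges because $\alpha r' > 1$, and this last inequality is equivalent to $s > 1$ under the scaling relation on $\alpha$. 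Optimizing $R$ to balance the two contributions produces the displayed pointwise bound, with the exponent $r/s'$ emerging directly from the algebra of $\alpha$.

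To close the argument, I would raise both sides to the $s'$-th power, integrate in $y$, and invoke the strong-type $(r,r)$ bound for the maximal operator (valid since $r > 1$) to obtain
\[
\|I_\alpha h\|_{L^{s'}(\R)} \lesssim_{r,s} \|h\|_{L^r(\R)},
\]
which by duality recovers the statement of the lemma. I do not expect any genuine obstacle: the only delicate bookkeeping is checking that the two places where the hypothesis $r, s \in (1,\infty)$ is used --- integrability of the tail of $|u|^{-\alpha r'}$ on the one hand, the Hardy--Littlewood maximal theorem on the other --- are both automatically compatible with the scaling relation $\alpha = 2 - 1/r - 1/s$, which is immediate.
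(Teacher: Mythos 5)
The paper does not supply a proof of this lemma: it is presented as a standard fact about fractional integration and cited without argument, so there is no internal proof to compare against. Your proposal reproduces the standard Hedberg-trick derivation, and the bookkeeping all checks out: the duality step correctly converts the bilinear bound to $\|I_\alpha h\|_{L^{s'}}\lesssim\|h\|_{L^r}$ with $1/s'=1/r-(1-\alpha)$; the hypothesis $1/r+1/s>1$ together with $r,s>1$ does force $\alpha\in(0,1)$ and $1<r<s'<\infty$; the local piece over $|x-y|<R$ is indeed $O(R^{1-\alpha}Mh(y))$ by summing dyadic shells; the tail is $O(R^{1/r'-\alpha}\|h\|_{L^r})$ by H\"older, with $\alpha r'>1$ (equivalently $s>1$) guaranteeing convergence and a negative exponent of $R$; and balancing at $R=(\|h\|_{L^r}/Mh(y))^{r}$ gives $|I_\alpha h(y)|\lesssim (Mh(y))^{r/s'}\|h\|_{L^r}^{1-r/s'}$, since $r(1-\alpha)=1-r/s'$. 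Raising to the $s'$-th power, integrating, and invoking the $L^r$ boundedness of $M$ (valid as $r>1$) closes the argument. This is correct and is the canonical proof of one-dimensional Hardy--Littlewood--Sobolev.
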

The following result shows that distant caps interact weakly.
\begin{lemma}\label{WeakInteractionLemma}
Let $2<q<\infty$. For any $k,\ell\in\Z$, if ${\rm supp} (f)\subset \C_k$ and ${\rm supp} (g)\subset \C_\ell$, then
\begin{equation*}
\|Tf\cdot Tg\|_{L^q(\R^2)} \lesssim_{q}\, e^{-\frac{|k-\ell|}{2q}} \|f\|_{L^{\frac{2q}{2q-3}}(\H^1)}\|g\|_{L^{\frac{2q}{2q-3}}(\H^1)}.
\end{equation*}
\end{lemma}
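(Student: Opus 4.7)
The plan is to reduce to $\ell=0$ by Lorentz invariance, pass from $\|Tf\cdot Tg\|_{L^q}$ to a $L^{q'}$-norm of $f\sigma\ast g\sigma$ via Hausdorff--Young, rewrite the latter as a weighted bilinear integral on $[-1/2,1/2]^2$, and close matters by a dichotomy on $|k-\ell|$. Concretely, Lemma \ref{1Dtessellation}(b) combined with \eqref{NormInvariance} lets me replace $(k,\ell)$ by $(k-\ell,0)$, so I set $\ell=0$ and $m:=|k|$. Parametrize $\C_k$ by $u\in[-1/2,1/2]\mapsto(\sinh(k+u),\cosh(k+u))$ and $\C_0$ by $v\in[-1/2,1/2]\mapsto(\sinh v,\cosh v)$, so that $d\sigma$ pulls back to $du$ and $dv$; with $\tilde f(u):=f(\sinh(k+u),\cosh(k+u))$ and similarly $\tilde g(v)$ one has $\|\tilde f\|_{L^p([-1/2,1/2])}=\|f\|_{L^p(\C_k,\sigma)}$ for every $p$. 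Since $q\geq 2$, the Fourier identity $|Tf\cdot Tg|=|\widehat{f\sigma\ast g\sigma}|$ combined with Hausdorff--Young gives
$$\|Tf\cdot Tg\|_{L^q(\R^2)}\lesssim_q \|f\sigma\ast g\sigma\|_{L^{q'}(\R^2)},\qquad q':=\frac{q}{q-1}\in(1,2).$$

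I would next express $f\sigma\ast g\sigma$ as the pushforward of $\tilde f(u)\tilde g(v)\,du\,dv$ under $\phi(u,v):=(\sinh(k+u)+\sinh v,\,\cosh(k+u)+\cosh v)$, whose Jacobian determinant equals $-\sinh(k+u-v)$. The map $\phi$ is injective on $[-1/2,1/2]^2$ when $|k|\geq 1$ and at most $2$-to-$1$ otherwise, with the two preimages related by the involution $(u,v)\leftrightarrow(v-k,u+k)$, which preserves the absolute Jacobian. The elementary inequality $|a+b|^{q'}\leq 2^{q'-1}(|a|^{q'}+|b|^{q'})$ applied to the at most two-branch sum, combined with the area formula, then delivers
$$\|f\sigma\ast g\sigma\|_{L^{q'}(\R^2)}^{q'}\lesssim \int_{[-1/2,1/2]^2}|\tilde f(u)|^{q'}|\tilde g(v)|^{q'}\,\frac{du\,dv}{|\sinh(k+u-v)|^{\alpha}},\qquad \alpha:=q'-1=\frac{1}{q-1}\in(0,1).$$

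The final step is a dichotomy in $m$. If $m\geq 2$, the pointwise bound $|\sinh(k+u-v)|\geq\sinh(m-1)\gtrsim e^{m}$ makes the kernel $\lesssim e^{-\alpha m}$, and the H\"older embedding $L^r(\C_k)\hookrightarrow L^{q'}(\C_k)$ (valid because $\sigma(\C_k)=1$ and $q'\leq r:=2q/(2q-3)$) combined with Hausdorff--Young gives
$$\|Tf\cdot Tg\|_{L^q(\R^2)}\lesssim e^{-m/q}\|f\|_{L^r(\C_k)}\|g\|_{L^r(\C_0)}\leq e^{-m/(2q)}\|f\|_{L^r(\C_k)}\|g\|_{L^r(\C_0)}.$$
If $m<2$, the estimate $|\sinh x|\geq|x|$ dominates the kernel by $|k+u-v|^{-\alpha}$, integrably singular since $\alpha<1$; extending $\tilde f,\tilde g$ by zero to $\R$ and applying the Hardy--Littlewood--Sobolev inequality (Lemma \ref{HLS}) with the equal exponents $r_H:=2(q-1)/(2q-3)\in(1,2)$ (satisfying $2/r_H+\alpha=2$) bounds the integral by $\|\tilde f\|_{L^{q'r_H}}^{q'}\|\tilde g\|_{L^{q'r_H}}^{q'}$; the identity $q'r_H=2q/(2q-3)=r$ together with Hausdorff--Young then yields $\|Tf\cdot Tg\|_{L^q}\lesssim\|f\|_{L^r(\C_k)}\|g\|_{L^r(\C_0)}$, and the factor $e^{-m/(2q)}\geq e^{-1/q}$, bounded below in this regime, is harmlessly absorbed into the implicit constant.

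The main obstacles are twofold. First, $\phi$ fails to be injective when $|k|<1$, which would normally block a direct change of variables; fortunately the symmetry $(u,v)\leftrightarrow(v-k,u+k)$ preserves the absolute Jacobian, so the triangle inequality absorbs the ambiguity at the cost of a factor $2^{q'-1}$. Second, the kernel $|\sinh(k+u-v)|^{-\alpha}$ is singular on $\{v=k+u\}$, and this singularity becomes critical at $\alpha=1$ (i.e., $q=2$), the forbidden endpoint of the one-dimensional HLS inequality; this is why the hypothesis $q>2$ cannot be relaxed, and why the natural Plancherel-plus-Cauchy--Schwarz route to a $L^2$ endpoint estimate fails.
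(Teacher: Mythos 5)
Your proof is correct, and it reaches the same key kernel estimate as the paper---an integral of $|\tilde f(u)|^{q'}|\tilde g(v)|^{q'}$ against $|\sinh(\theta_1-\theta_2)|^{-1/(q-1)}$---via the same route (Hausdorff--Young plus the Carleson--Sj\"olin-type change of variables, yours phrased as a pushforward of the convolution $f\sigma\ast g\sigma$, the paper's as a change of variables in the Fourier integral for $Tf\cdot Tg$; these are the same computation). You also handle the two-branch degeneracy of the map $\phi$ differently, by triangle inequality with a $2^{q'-1}$ factor, while the paper folds it into a symmetrized integrand $h$ restricted to $\{\xi_1\geq\xi_2\}$; both work. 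The only genuine divergence is in the last step. The paper uses the single inequality $|\sinh\theta|\geq\tfrac{|\theta|}{2}e^{|\theta|/2}$ to factor the kernel into a fractional-integration part $|\theta_1-\theta_2|^{-1/(q-1)}$ (fed to Lemma \ref{HLS}) and an exponential part $e^{-|\theta_1-\theta_2|/(2(q-1))}$, and then reads off the decay $e^{-|k-\ell|/(2q)}$ from $|(\theta_1-\theta_2)-(k-\ell)|\le1$; this treats all $k,\ell$ in one pass. You instead run a dichotomy in $m=|k-\ell|$: for $m\geq2$ the kernel is simply bounded by $e^{-\alpha m}$ pointwise and HLS is not needed (only H\"older on the unit-measure cap), while for $m\in\{0,1\}$ you invoke HLS with $|\sinh x|\geq|x|$ and absorb the bounded-below exponential into the constant. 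Both are correct; the paper's unified splitting is slightly slicker, whereas your dichotomy makes it more transparent that HLS is only needed for nearby caps (the singular case), and the far-cap decay is purely geometric. The Lorentz reduction to $\ell=0$ at the start is a harmless simplification the paper does not bother with.
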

\begin{proof}
Define the auxiliary function
$$h(\xi_1,\xi_2):=(f(\xi_1)g(\xi_2)+f(\xi_2)g(\xi_1))\one_{\{\xi_1\geq\xi_2\}}(\xi_1,\xi_2),$$
for which
$$(Tf\cdot Tg)(x,t)
=\int_{\R^2} e^{ix(\xi_1+\xi_2)}e^{it(\langle\xi_1\rangle+\langle\xi_2\rangle)}h(\xi_1,\xi_2)\frac{\d\xi_1}{\langle\xi_1\rangle}\frac{\d\xi_2}{\langle\xi_2\rangle}.$$
Following an argument that goes back to early work of Carleson--Sj\"olin \cite{CS72}, we change variables
$$(\xi_1,\xi_2)\mapsto(u,v)=(\xi_1+\xi_2,\langle\xi_1\rangle+\langle\xi_2\rangle)$$
in the region of integration $\{(\xi_1,\xi_2)\in\R^2: \xi_1\geq \xi_2\}$. Note that this is a bijective map onto the region $\{(u,v)\in\R \times \R^{+}: v^2 - u^2 \geq 4\}$. It follows that
$$(Tf\cdot Tg)(x,t)
=\int_{\R^2} e^{i(x,t)\cdot(u,v)}\frac{h(\xi_1,\xi_2)}{J(\xi_1,\xi_2)}\frac{\d u}{\langle\xi_1\rangle}\frac{\d v}{\langle\xi_2\rangle},$$
where $J$ denotes the Jacobian of this transformation, given by
$$J(\xi_1,\xi_2)=\left|\frac{\partial(u,v)}{\partial(\xi_1,\xi_2)}\right|
=\left|\frac{\xi_1}{\langle\xi_1\rangle}-\frac{\xi_2}{\langle\xi_2\rangle}\right|.$$
The Hausdorff--Young inequality on $\R^2$ implies that, for every $q\geq 2$,
$$\|Tf\cdot Tg\|_{L^q(\R^2)}\leq
\left(\int_{\R^2} \left|\frac{h(\xi_1,\xi_2)}{J(\xi_1,\xi_2)}\frac{1}{\langle\xi_1\rangle\langle\xi_2\rangle}\right|^{\frac q{q-1}}\d u\,\d v\right)^{\frac{q-1}q}.$$
Changing back to the original variables $(\xi_1,\xi_2)$, we obtain
$$\|Tf\cdot Tg\|_{L^q(\R^2)}\leq
\left(\int_{\R^2}  \frac{|h(\xi_1,\xi_2)|^{\frac q{q-1}}}{(J(\xi_1,\xi_2)\langle\xi_1\rangle\langle\xi_2\rangle)^{\frac 1{q-1}}}\frac{\d \xi_1}{\langle\xi_1\rangle}\frac{\d \xi_2}{\langle\xi_2\rangle}\right)^{\frac{q-1}q}.$$

In order to invoke Lemma \ref{HLS}, it is convenient to perform another change of variables $(\xi_1,\xi_2)=(\sinh(\theta_1),\sinh(\theta_2))$. 
Noting that
$$J(\xi_1,\xi_2)\langle\xi_1\rangle\langle\xi_2\rangle=|\sinh(\theta_1-\theta_2)|,$$
 Minkowski's inequality yields
 \begin{align}\label{July14_17:46pm}
 \begin{split}
 \|Tf\cdot Tg\|_{L^q(\R^2)}
 &\leq
 \left(\int_{\R^2}  \frac{|h(\sinh(\theta_1),\sinh(\theta_2))|^{\frac q{q-1}}}{|\sinh(\theta_1-\theta_2)|^{\frac 1{q-1}}}\,\d\theta_1\,\d\theta_2\right)^{\frac{q-1}q}\\
 &\lesssim
  \left(\int_{\R^2}  \frac{|f(\sinh(\theta_1))g(\sinh(\theta_2))|^{\frac q{q-1}}}{|\sinh(\theta_1-\theta_2)|^{\frac 1{q-1}}}\,\d\theta_1\,\d\theta_2\right)^{\frac{q-1}q}.
  \end{split}
 \end{align}
 
We now use the lower bound
 $$|\sinh(\theta)|\geq \frac{|\theta|}2 \exp\left(\frac{|\theta|}2\right),$$ 
which is valid for any $\theta\in\R$, together with Lemma \ref{HLS} with the choices $\alpha=\frac{1}{q-1}$ and $r=s=\frac{2q-2}{2q-3}$ (which are admissible since $2 < q < \infty$). From \eqref{July14_17:46pm} we get
 \begin{align*}
 \|Tf\cdot Tg\|_{L^q(\R^2)}
 &\lesssim
  \left(\int_{\R^2}  \frac{|f(\sinh(\theta_1))g(\sinh(\theta_2))|^{\frac q{q-1}}}{|\theta_1-\theta_2|^{\frac 1{q-1}}\,e^{\frac{|\theta_1-\theta_2|}{2(q-1)}}}\,\d\theta_1\,\d\theta_2\right)^{\frac{q-1}q}\\
 &\lesssim_q
e^{-\frac{|k-\ell|}{2q}}  \|f\|_{L^{\frac{2q}{2q-3}}(\H^1)}
\|g\|_{L^{\frac{2q}{2q-3}}(\H^1)},
\end{align*} 
where we have used that $|(\theta_1-\theta_2)-(k-\ell)|\leq 1$, since $|\theta_1-k|\leq \frac12$ and $|\theta_2-\ell|\leq \frac12$.
This completes the proof of the lemma.
 \end{proof}

Lemma \ref{WeakInteractionLemma} is the key ingredient in order to establish the following refinement of the inequality \eqref{ExtensionInequality} in the case $d=1$. 
In what follows, given $f\in L^2(\H^1)$, we shall decompose $f=\sum_{k\in\Z} f_k$ with $f_k:=f\one_{\C_k}$.

\begin{proposition}\label{BesovProp}
Let $3 \leq q < \infty$. For any $f\in L^2(\H^1)$ we have
\begin{equation}\label{PreCapBound}
\|T f\|_{L^{2q}(\R^2)} \lesssim_q \left(\sum_{\,k\in\Z} \|f_k\|_{L^{\frac{2q}{2q-3}}(\H^1)}^3\right)^{\frac13}.
\end{equation}
\end{proposition}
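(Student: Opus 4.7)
The plan is to cube (rather than square) the inequality, so as to access a trilinear structure tailored to Lemma \ref{WeakInteractionLemma}. Since the hypothesis $q \geq 3$ gives $2q/3 \geq 2 \geq 1$, Minkowski applied to the triple sum $(Tf)^3 = \sum_{k_1,k_2,k_3\in\Z} Tf_{k_1}Tf_{k_2}Tf_{k_3}$ yields
\[
\|Tf\|_{L^{2q}(\R^2)}^3 = \|(Tf)^3\|_{L^{2q/3}(\R^2)} \leq \sum_{(k_1,k_2,k_3)\in\Z^3} \|Tf_{k_1}Tf_{k_2}Tf_{k_3}\|_{L^{2q/3}(\R^2)}.
\]
For each triple I set $M := M(k_1,k_2,k_3) = \max_{i\neq j}|k_i-k_j|$, pick a pair $(i,j)$ attaining this maximum, and let $\ell$ be the remaining index. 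H\"older's inequality with exponents $(q,2q)$ (valid since $1/q + 1/(2q) = 3/(2q) = 1/(2q/3)$) gives
\[
\|Tf_{k_1}Tf_{k_2}Tf_{k_3}\|_{L^{2q/3}} \leq \|Tf_{k_i}Tf_{k_j}\|_{L^q}\,\|Tf_{k_\ell}\|_{L^{2q}}.
\]

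Setting $r := \frac{2q}{2q-3}$, the bilinear factor is controlled directly by Lemma \ref{WeakInteractionLemma}, giving $\|Tf_{k_i}Tf_{k_j}\|_{L^q} \lesssim_q e^{-M/(2q)}\|f_{k_i}\|_{L^r}\|f_{k_j}\|_{L^r}$. Applying the same lemma with coinciding caps $k = \ell = k_\ell$ handles the remaining single factor: since $|(Tf_{k_\ell})^2| = |Tf_{k_\ell}|^2$,
\[
\|Tf_{k_\ell}\|_{L^{2q}}^2 = \|(Tf_{k_\ell})^2\|_{L^q} \lesssim_q \|f_{k_\ell}\|_{L^r}^2.
\]
Writing $a_k := \|f_k\|_{L^r}$, the two estimates combine to
\[
\|Tf\|_{L^{2q}(\R^2)}^3 \lesssim_q \sum_{(k_1,k_2,k_3)\in\Z^3} e^{-M(k_1,k_2,k_3)/(2q)}\, a_{k_1}a_{k_2}a_{k_3}.
\]

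To conclude, I would apply AM-GM to the cubes: $a_{k_1}a_{k_2}a_{k_3} \leq \tfrac13(a_{k_1}^3 + a_{k_2}^3 + a_{k_3}^3)$. By the symmetry of the summand ($M$ is symmetric in the $k_i$), the right-hand side is bounded by $\sum_{k_1} a_{k_1}^3 \cdot \sum_{k_2,k_3} e^{-M(k_1,k_2,k_3)/(2q)}$. Since $M \geq \max(|k_2 - k_1|, |k_3 - k_1|) \geq \tfrac12\bigl(|k_2 - k_1| + |k_3 - k_1|\bigr)$, the inner sum is uniformly bounded in $k_1$ by $\bigl(\sum_{m\in\Z} e^{-|m|/(4q)}\bigr)^2 < \infty$, delivering the proposition. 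The conceptual crux (and main obstacle) is the decision to cube instead of square: the squaring approach via $|Tf|^2 = \sum_{k,\ell} Tf_k\overline{Tf_\ell}$ together with Lemma \ref{WeakInteractionLemma} only gives $\|Tf\|_{L^{2q}}^2 \lesssim_q \|a\|_{\ell^2}^2$, which strictly fails to deliver the $\ell^3$-refinement (testing with $a_k = \one_{\{1,\ldots,N\}}$ yields $\|a\|_{\ell^2}^2 = N$ versus $\|a\|_{\ell^3}^2 = N^{2/3}$). The additional index produced by the triple sum creates just enough room for AM-GM to convert the bilinear exponential decay into the sharper $\ell^3$-summation.
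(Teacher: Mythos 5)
The proposal is correct and follows essentially the same route as the paper: cube $Tf$, apply H\"older to peel off the pair of caps at maximal separation, invoke Lemma \ref{WeakInteractionLemma} on that pair, apply AM--GM to convert the product $a_{k_1}a_{k_2}a_{k_3}$ into cubes, and sum geometric series. The only variation is cosmetic: to bound the unpaired factor $\|Tf_{k_\ell}\|_{L^{2q}}$ the paper invokes the interpolated estimate \eqref{InterpolatedExtension}, whereas you reuse Lemma \ref{WeakInteractionLemma} with coinciding caps via $\|Tf_{k_\ell}\|_{L^{2q}}^2=\|(Tf_{k_\ell})^2\|_{L^q}$, which is a nice economy but not a different argument.
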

When $3 \leq q \leq \infty$, note that $1 \leq \frac{2q}{2q-3}\leq2$. 
In this case,  the estimates 
$$\|Tf\|_{L^6(\R^2)}\lesssim \|f\|_{L^2(\H^1)},\text{ and } \|Tf\|_{L^\infty(\R^2)}\leq \|f\|_{L^1(\H^1)}$$
can be interpolated to yield
\begin{equation}\label{InterpolatedExtension}
\|Tf\|_{L^{2q}(\R^2)}\lesssim_q \|f\|_{L^{\frac{2q}{2q-3}}(\H^1)}.
\end{equation}
\begin{proof}[Proof of Proposition \ref{BesovProp}]
Writing
$$(Tf)^3=\sum_{k,\ell,m\in\Z} Tf_{k} \cdot Tf_{\ell}\cdot Tf_{m},$$
Minkowski's triangle inequality plainly implies that
\begin{equation}\label{FirstBound}
\|Tf\|_{L^{2q}(\R^2)}^3=\big\|(Tf)^3\big\|_{L^{\frac{2q}3}(\R^2)}
\leq \sum_{k,\ell,m\in\Z} \| Tf_{k} \cdot Tf_{\ell}\cdot Tf_{m}\|_{L^{\frac{2q}3}(\R^2)}.
\end{equation}
Given a  triplet $(k,\ell,m)\in\Z^3$, we lose no generality in assuming that
$$|k-\ell|=\max\{|k-\ell|,|\ell-m|,|m-k|\}.$$
 H\"older's inequality, Lemma \ref{WeakInteractionLemma} (recall that $q < \infty$) and estimate \eqref{InterpolatedExtension}, together with the maximality of $|k-\ell|$, imply
\begin{align}\label{SecondBound}
\begin{split}
\| Tf_{k} \cdot Tf_{\ell}& \cdot Tf_{m}\|_{L^{\frac{2q}3}(\R^2)}
\leq \|Tf_k\cdot Tf_\ell\|_{L^{q}(\R^2)}\|Tf_m\|_{L^{2q}(\R^2)}\\
&\lesssim_q e^{-\frac{|k-\ell|}{2 q}} \|f_k\|_{L^{\frac{2q}{2q-3}}(\H^1)}\|f_\ell\|_{L^{\frac{2q}{2q-3}}(\H^1)}\|f_m\|_{L^{\frac{2q}{2q-3}}(\H^1)}\\
&\leq e^{-\frac{|k-\ell|}{6 q}}e^{-\frac{|\ell-m|}{6q}}e^{-\frac{|m-k|}{6 q}} \|f_k\|_{L^{\frac{2q}{2q-3}}(\H^1)}\|f_\ell\|_{L^{\frac{2q}{2q-3}}(\H^1)}\|f_m\|_{L^{\frac{2q}{2q-3}}(\H^1)}.
\end{split}
\end{align}
Putting together \eqref{FirstBound} and \eqref{SecondBound}, we conclude that 
\begin{align*}
\|Tf\|_{L^{2q}(\R^2)}^3
&\lesssim_q
 \sum_{k,\ell,m\in\Z} e^{-\frac{|k-\ell|}{6 q}}e^{-\frac{|\ell-m|}{6q}}e^{-\frac{|m-k|}{6 q}} \|f_k\|_{L^{\frac{2q}{2q-3}}(\H^1)}\|f_\ell\|_{L^{\frac{2q}{2q-3}}(\H^1)}\|f_m\|_{L^{\frac{2q}{2q-3}}(\H^1)}.\\
 &\leq \sum_{k,\ell,m\in\Z} e^{-\frac{|k-\ell|}{6 q}}e^{-\frac{|\ell-m|}{6q}}e^{-\frac{|m-k|}{6 q}} \|f_k\|^3_{L^{\frac{2q}{2q-3}}(\H^1)},
 \end{align*}
where the last line follows from the inequality between the arithmetic and the geometric means. Summing two geometric series, we finally have that
 $$\|Tf\|_{L^{2q}(\R^2)}^3\lesssim_q\sum_{k\in\Z}\|f_k\|^3_{L^{\frac{2q}{2q-3}}(\H^1)},$$
 as desired. This completes the proof of the proposition.
\end{proof}

We have the following immediate but useful consequence.
\begin{corollary}
Let $6\leq p < \infty$. Then there exists $C_p<\infty$ such that, for any $f\in L^2(\H^1),$
\begin{equation}\label{CapBound}
\|Tf\|_{L^p(\R^2)}
\leq C_p \, \sup_{k\in\Z} \|f_k\|^{\frac13}_{L^2(\H^1)}\,
 \|f\|_{L^2(\H^1)}^{\frac23}.
 \end{equation}
\end{corollary}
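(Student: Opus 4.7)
The proof is a direct consequence of Proposition \ref{BesovProp} combined with two elementary observations: Hölder's inequality on caps (which have unit $\sigma$-measure) and a standard ``pull out the supremum'' trick using the disjointness of the caps.

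Setting $q=p/2$, the hypothesis $6\leq p<\infty$ is equivalent to $3\leq q<\infty$, so Proposition \ref{BesovProp} applies and yields
\[
\|Tf\|_{L^p(\R^2)}\lesssim_p\Big(\sum_{k\in\Z}\|f_k\|_{L^{\frac{p}{p-3}}(\H^1)}^3\Big)^{1/3}.
\]
Since $p\geq 6$, the exponent satisfies $\frac{p}{p-3}\leq 2$; together with $\sigma(\C_k)=1$ from Lemma \ref{1Dtessellation}(a), Hölder's inequality on the cap $\C_k$ gives
\[
\|f_k\|_{L^{\frac{p}{p-3}}(\H^1)}\leq \sigma(\C_k)^{\frac{p-3}{p}-\frac12}\|f_k\|_{L^2(\H^1)}=\|f_k\|_{L^2(\H^1)}.
\]

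Next I would bound the $\ell^3$ sum by pulling out one factor of the supremum:
\[
\sum_{k\in\Z}\|f_k\|_{L^2(\H^1)}^3\leq \Big(\sup_{k\in\Z}\|f_k\|_{L^2(\H^1)}\Big)\sum_{k\in\Z}\|f_k\|_{L^2(\H^1)}^2.
\]
Because the caps $\{\C_k\}_{k\in\Z}$ tessellate $\H^1$, the functions $\{f_k\}_{k\in\Z}$ have pairwise disjoint supports, and so $\sum_{k}\|f_k\|_{L^2(\H^1)}^2=\|f\|_{L^2(\H^1)}^2$. Assembling these inequalities and taking cube roots produces
\[
\|Tf\|_{L^p(\R^2)}\lesssim_p\Big(\sup_{k\in\Z}\|f_k\|_{L^2(\H^1)}\Big)^{1/3}\|f\|_{L^2(\H^1)}^{2/3},
\]
which is precisely \eqref{CapBound}.

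There is no substantive obstacle here: the hard work was carried out in Proposition \ref{BesovProp} (via the bilinear interaction estimate of Lemma \ref{WeakInteractionLemma}), and the corollary is essentially a bookkeeping step that converts the $\ell^3(L^{p/(p-3)})$ refinement into the desired $\ell^\infty$-type cap concentration bound suitable for concentration-compactness arguments.
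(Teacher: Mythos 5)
Your proof is correct and essentially identical to the paper's: both apply Proposition~\ref{BesovProp}, then combine H\"older's inequality on unit-measure caps (to pass from $L^{p/(p-3)}(\C_k)$ to $L^2(\C_k)$) with the factorization $\sum_k a_k^3 \leq (\sup_k a_k)\sum_k a_k^2$ and the disjointness of the caps. The only difference is cosmetic — you apply H\"older before pulling out the supremum, while the paper does it in the opposite order — and this has no bearing on the argument.
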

\begin{proof}
Using estimate \eqref{PreCapBound} with $p=2q$ we get
\begin{equation}\label{Pre2CapBound}
\|Tf\|_{L^p(\R^2)} \leq C_p \, \sup_{k\in\Z} \|f_k\|^{\frac13}_{L^{\frac{p}{p-3}}(\H^1)}
\left(\sum_{k\in\Z} \|f_k\|_{L^{\frac p{p-3}}(\H^1)}^2\right)^{\frac 13},
\end{equation}
for some constant $C_p < \infty$. Applying H\"older's inequality, and recalling that $\sigma(\C_k)=1$, 
$$\|f_k\|_{L^{\frac{p}{p-3}}(\H^1)}
\leq \|\one_{\C_k}\|_{L^{\frac{2p}{p-6}}(\H^1)}\|f_k\|_{L^{2}(\H^1)}
=\|f_k\|_{L^{2}(\H^1)}.$$
Plugging this into the right-hand side of \eqref{Pre2CapBound}, and appealing to the disjointness of the supports of the $\{f_k\}$, yields the desired conclusion.
\end{proof}

\begin{proposition}\label{SpecialCap1D}
Let $d=1$ and $6\leq p < \infty$. Let $\{f_n\}_{n\in\N}\subset L^2(\H^1)$ be an extremizing sequence for inequality \eqref{ExtensionInequality}, normalized so that $\|f_n\|_{L^2(\H^1)}=1$ for each $n \in \N$. There exists a universal constant $\eta_{1,p}>0$ and $n_0 \in \N$, such that for any $n \geq n_0$ there exists $s_n\in (-1,1)$ verifying
$$\|(L^{s_n})^*f_n\|_{L^2(\C_0)}
\geq\frac12\,\sup_{k\in\Z} \|(L^{s_n})^* f_n\|_{L^2(\C_k)}
\geq \eta_{1,p}.$$
\end{proposition}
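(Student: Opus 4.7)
The plan is to package the refined extension inequality \eqref{CapBound} together with the tessellation and Lorentz action of Lemma \ref{1Dtessellation}. First, since the sharp constant $\mathbf{H}_{1,p}$ is strictly positive (non‑trivial test functions give a positive value of $\|Tf\|_{L^p}/\|f\|_{L^2(\H^1)}$) and $\{f_n\}$ is extremizing with $\|f_n\|_{L^2(\H^1)}=1$, there exists $n_0\in\N$ such that $\|Tf_n\|_{L^p(\R^2)}\geq \mathbf{H}_{1,p}/2$ for all $n\geq n_0$.

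Next, I would apply the refinement \eqref{CapBound} with $f_n$ in place of $f$, which together with the normalization yields
\[
\tfrac12\mathbf{H}_{1,p}\leq \|Tf_n\|_{L^p(\R^2)}\leq C_p\,\sup_{k\in\Z}\|f_{n,k}\|_{L^2(\H^1)}^{1/3},
\]
and therefore $\sup_{k\in\Z}\|f_{n,k}\|_{L^2(\H^1)}\geq (\mathbf{H}_{1,p}/(2C_p))^3=:2\eta_{1,p}$. Since $\sum_{k\in\Z}\|f_{n,k}\|_{L^2(\H^1)}^2=1$, the quantities $\|f_{n,k}\|_{L^2(\H^1)}$ tend to zero as $|k|\to\infty$, so one can pick $k_n\in\Z$ achieving
\[
\|f_{n,k_n}\|_{L^2(\H^1)}\;\geq\;\tfrac12\sup_{k\in\Z}\|f_{n,k}\|_{L^2(\H^1)}\;\geq\;\eta_{1,p}.
\]

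Finally, I would transport $\C_{k_n}$ back to the ground cap $\C_0$ via the Lorentz boost of Lemma \ref{1Dtessellation}(b). The explicit calculation in the proof of that lemma shows that, with $s_n:=\tanh(k_n)\in(-1,1)$, the map $L^{s_n}$ acts by translation by $+k_n$ in the hyperbolic angle parameter $u$, so $L^{s_n}(\C_k)=\C_{k+k_n}$ for every $k\in\Z$. Combined with the Lorentz invariance of $\sigma$ (recorded in \eqref{NormInvariance}), this yields, for every $k\in\Z$,
\[
\|(L^{s_n})^* f_n\|_{L^2(\C_k)}^2
=\int_{L^{s_n}(\C_k)}|f_n|^2\,\d\sigma
=\|f_{n,k+k_n}\|_{L^2(\H^1)}^2.
\]
Setting $k=0$ then gives $\|(L^{s_n})^* f_n\|_{L^2(\C_0)}=\|f_{n,k_n}\|_{L^2(\H^1)}$, while taking the supremum on the right gives $\sup_{k\in\Z}\|(L^{s_n})^* f_n\|_{L^2(\C_k)}=\sup_{j\in\Z}\|f_{n,j}\|_{L^2(\H^1)}$. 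The choice of $k_n$ in the previous step now delivers exactly the two required inequalities.

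There is no serious obstacle here: all the heavy lifting has already been performed in Proposition \ref{BesovProp} and its corollary \eqref{CapBound}, which preclude mass from leaking uniformly across all caps, and in Lemma \ref{1Dtessellation}, which supplies a canonical Lorentz boost sending any cap to $\C_0$. The only (minor) point requiring a moment's care is to guarantee that the supremum over $k$ is essentially attained, which follows immediately from the square‑summability of $\{\|f_{n,k}\|_{L^2(\H^1)}\}_{k\in\Z}$ coming from the $L^2$ normalization.
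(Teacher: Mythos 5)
Your proof is correct and follows essentially the same route as the paper's: extract $n_0$ from the extremizing property, apply the cap refinement \eqref{CapBound} to produce a cap with mass bounded below, and transport that cap to $\C_0$ via the Lorentz boost of Lemma \ref{1Dtessellation}. In fact you are slightly more explicit than the paper about two points that are worth keeping: (i) you select $k_n$ so that $\|f_{n,k_n}\|_{L^2(\H^1)}\geq \tfrac12\sup_k\|f_{n,k}\|_{L^2(\H^1)}$, which is exactly what is needed to justify the intermediate inequality $\|(L^{s_n})^*f_n\|_{L^2(\C_0)}\geq \tfrac12\sup_k\|(L^{s_n})^*f_n\|_{L^2(\C_k)}$ appearing in the statement (the paper only records the choice $\|f_{n,k(n)}\|\geq\eta_{1,p}$); and (ii) you spell out the identity $\|(L^{s_n})^*f_n\|_{L^2(\C_k)}=\|f_{n,k+k_n}\|_{L^2(\H^1)}$, using that the boost $L^{s_n}=L^{\tanh(k_n)}$ acts as a shift by $k_n$ in the hyperbolic angle and that $\sigma$ is Lorentz invariant, whereas the paper leaves the supremum relation implicit. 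No gap.
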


\begin{proof} Let $n_0 \in \N$ be such that, for $n \geq n_0$, we have 
\begin{equation}\label{July15_1:00pm}
\|T f_n\|_{L^p(\R^2)}\geq\frac{{\bf H}_{1,p}}2.
\end{equation}
Fix $n\geq n_0$. Using \eqref{July15_1:00pm} and the cap bound \eqref{CapBound}, we have that 
$$\sup_{k\in\Z} \|f_{n,k}\|_{L^2(\H^1)}\geq\left(\frac{{\bf H}_{1,p}}{2 C_p}\right)^3,$$
where $f_{n,k}:=f_n\one_{\C_k}$, and  $C_p$ is the constant from inequality \eqref{CapBound}. 
Let $k(n)\in\Z$ be such that
$$\|f_{n,k(n)}\|_{L^2(\H^1)}\geq \eta_{1,p}:=\frac12\left(\frac{{\bf H}_{1,p}}{2 C_p}\right)^3.$$
Choose $s_n:=\tanh(k(n))$. Since $L^{s_n}(\C_{0})=\C_{k(n)}$ (recall Lemma \ref{1Dtessellation} (b)), we then have by \eqref{NormInvariance} that
$$\text{supp}\big((L^{s_n})^*f_{n,k(n)}\big)\subseteq \C_0,\text{ and }\|(L^{s_n})^*f_{n,k(n)}\|_{L^2(\H^1)}\geq \eta_{1,p}.$$
This concludes the proof of the proposition.
\end{proof}

\subsection{Two-dimensional setting}
In order to study the interaction between the distinct caps from the family $\{\C_{n,j}\}$ defined in \eqref{2DCap} and \eqref{2DCap0}, we try to relate the non-endpoint problem to the lower endpoint problem. Log-convexity of Lebesgue norms readily implies the following: given $p\in (4,6)$, there exists $\theta\in(0,1)$ such that
\begin{equation}\label{LogConvexity}
\|Tf\|_{L^p(\R^3)}\leq \|Tf\|_{L^4(\R^3)}^\theta \|Tf\|_{L^6(\R^3)}^{1-\theta}.
\end{equation}
In particular, if $\{f_n\}_{n\in\N}$ is an extremizing sequence for inequality \eqref{ExtensionInequality} when $d=2$ and $p\in (4,6)$, normalized so that $\|f_n\|_{L^2(\H^2)}=1$ for each $n \in \N$, then both quantities on the right-hand side of inequality \eqref{LogConvexity} cannot be too small, in the sense that there exists a universal constant $\gamma_p>0$, depending on $p$ but {\it not} on $n$, and $n_0 \in \N$ such that
$$\min\{\|Tf_n\|_{L^4(\R^3)}, \|Tf_n\|_{L^6(\R^3)}\}\geq \gamma_p, \text{ for any } n \geq n_0.$$
The idea will be to exploit the convolution structure of the lower endpoint problem $(d,p)=(2,4)$ to derive some nontrivial information about the non-endpoint case. 
The crux of the matter lies in the following result.

\begin{proposition}\label{SpecialCap2DMainProp}
Let  $f\in L^2(\H^2)$, normalized so that $\|f\|_{L^2(\H^2)}=1$. Let $\eps>0$ and assume that 
$$\sup_{n,j} \|f\|_{L^2(\C_{n,j})}\leq \eps,$$
where the supremum is taken over all $n\in\N \cup \{0\}$ and $0\leq j< 2^n$. Then
\begin{equation}\label{July15_8:07pm}
\|Tf\|^4_{L^4(\R^3)}\lesssim \eps\log_2(\eps^{-1}).
\end{equation}
\end{proposition}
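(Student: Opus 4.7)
The plan is to reduce the $L^4$-estimate to a multilinear convolution inequality via Plancherel and Cauchy--Schwarz, and then carry out a layer-cake argument exploiting the cap hypothesis.

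First, by Plancherel, $\|Tf\|_{L^4(\R^3)}^4 = (2\pi)^3 \|f\sigma \ast f\sigma\|_{L^2(\R^3)}^2$, and a Cauchy--Schwarz in the spirit of \eqref{July13_13:01pm} gives
\begin{equation*}
\|Tf\|_{L^4(\R^3)}^4 \leq (2\pi)^3 \, \mathcal{I}, \qquad \mathcal{I} := \int_{\H^2 \times \H^2} |f(x_1)|^2\, |f(x_2)|^2\, (\sigma\ast\sigma)(x_1+x_2)\, \d\sigma(x_1)\, \d\sigma(x_2).
\end{equation*}
For $x_i = (y_i, \langle y_i\rangle) \in \H^2$, a direct computation yields $(\langle y_1\rangle + \langle y_2\rangle)^2 - |y_1+y_2|^2 = 2 + 2B(x_1, x_2)$, where $B(x_1, x_2) := \langle y_1\rangle\langle y_2\rangle - y_1 \cdot y_2 \geq 1$ is the Lorentz bilinear form (with equality iff $x_1 = x_2$). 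Combining this with Lemma \ref{2DConvolutions}(a), the kernel takes the form $(\sigma\ast\sigma)(x_1+x_2) = 2\pi/\Psi(x_1, x_2)$, where $\Psi := \sqrt{2+2B} \geq 2$.

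Next, I would control $\mathcal{I}$ via a layer-cake decomposition in the level sets of the ``Lorentz distance'' $\Psi$. For each $x_1 \in \H^2$ and $r \geq 2$, set
$$
m(x_1, r) := \int_{\{x_2 \in \H^2 \colon \Psi(x_1, x_2) \leq r\}} |f(x_2)|^2\, \d\sigma(x_2).
$$
The key geometric claim is the uniform-in-$x_1$ bound $m(x_1, r) \lesssim \min\{1,\, r^2 \log(1+r) \cdot \eps^2\}$. By Lorentz invariance, the Lorentz ball $\{\Psi(x_1, \cdot) \leq r\}$ has $\sigma$-measure comparable to $r^2$; covering it by at most $O(r^2 \log(1+r))$ caps $\C_{n,j}$ and invoking the cap hypothesis $\|f|_{\C_{n,j}}\|_{L^2}^2 \leq \eps^2$ gives the claim. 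Granted this, the layer-cake identity
$$
\int_{\H^2} |f(x_2)|^2\, \Psi(x_1, x_2)^{-1}\, \d\sigma(x_2) = \int_2^\infty m(x_1, r)\, \frac{\d r}{r^2}
$$
produces, after splitting at $r = \eps^{-1}$, the bound $\lesssim \eps \log(\eps^{-1})$ uniformly in $x_1$. Integrating against $|f(x_1)|^2\, \d\sigma(x_1)$ and using $\|f\|_{L^2(\H^2)}^2 = 1$ yields $\mathcal{I} \lesssim \eps \log(\eps^{-1})$, which is \eqref{July15_8:07pm}.

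The main obstacle is establishing the uniform cap-count bound, since the decomposition \eqref{2DCap}--\eqref{2DCap0} is not Lorentz-invariant. When $x_1$ is near the vertex, $\{\Psi(x_1, \cdot) \leq r\}$ is essentially a Euclidean disk of radius $\sim r^2$, and the covering count is straightforward. However, for $x_1$ high on the hyperboloid, the Lorentz ball becomes an elongated ``cone'' pointing along the direction of $y_1$, spanning $O(\log r)$ different annuli $\A_n$ and a narrow angular arc within each. Exploiting the hyperbolic-polar identity
$$
B(x_1, x_2) - 1 = 2\sinh^2\!\Big(\tfrac{u_1-u_2}{2}\Big) + 2\sinh u_1 \sinh u_2 \sin^2\!\Big(\tfrac{\theta_1-\theta_2}{2}\Big), \qquad u_i := \sinh^{-1}|y_i|,
$$
one estimates the angular opening and radial extent of the cone at each annulus and verifies that the total cap count remains $O(r^2 \log(1+r))$, accounting for the logarithmic slack in the final estimate.
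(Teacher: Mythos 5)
Your approach is correct and genuinely different from the paper's. After the common reduction via Plancherel and Cauchy--Schwarz, the paper directly sums the bilinear integral over pairs of caps $(\C_{n,j},\C_{m,\ell})$ and splits into two cases according to angular separation: for roughly aligned caps the gain comes from the radial separation ($s^2 \gtrsim 2^{m-n}$), while for angularly separated caps a Cauchy--Schwarz over blocks of caps exploits the angular decay of $(\sigma\ast\sigma)^{-1}$. You instead prove a pointwise (in $x_1$) bound $\int |f(x_2)|^2 \Psi(x_1,x_2)^{-1}\,\d\sigma(x_2) \lesssim \eps\log(\eps^{-1})$ via a layer-cake decomposition in the Lorentz-invariant quantity $\Psi$, which isolates the entire geometric content of the argument in a single cap-count claim for the Lorentz balls $\{\Psi(x_1,\cdot) \leq r\}$. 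That claim is true (in fact the count is $O(r^2)$ rather than $O(r^2\log(1+r))$: the ball meets $O(\log r)$ annuli, with $\lesssim \max\{1, r\,2^{(M-N)/2}\}$ caps in annulus $M$, and the geometric sum is dominated by its top term), and with the sharper count and a sharper split point in the layer-cake one would even obtain $\lesssim\eps$ — but your version suffices for the stated estimate. The honest caveat, which you flag yourself, is that the cap-count claim is only sketched: verifying it rigorously amounts to the same estimates on $B(x_1,x_2) \approx 2^{|M-N|} + 2^{M+N}|\theta_1-\theta_2|^2$ that drive the paper's two-case analysis, so the total work is comparable. The reorganization buys conceptual clarity (one clean geometric statement, a stronger pointwise conclusion, and a transparent role for Lorentz invariance) at the cost of having to prove that statement uniformly in $x_1$, including the degenerate regimes where $x_1$ is near the vertex or near the boundary of its annulus.
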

\noindent {\it Remark:} The relevant feature of the function $\Phi(\eps)=\eps\log_2(\eps^{-1})$ is that $\Phi(\eps)\to 0$, as $\eps\to 0^+$. Any other $\Phi$ with the same property would serve our  purpose equally well.
\begin{proof}
Recalling \eqref{TintermsofHat}, the usual application of Plancherel's Theorem and the Cauchy--Schwarz inequality (the latter as in \eqref{July13_13:01pm}) yields
$$\|Tf\|_{L^4(\R^3)}^4
\simeq \|f\sigma\ast f\sigma\|_{L^2(\R^3)}^2
\leq \int_{\R^2\times\R^2} 
|f(\xi)|^2|f(\eta)|^2 (\sigma\ast\sigma)(\xi+\eta,\langle\xi\rangle+\langle\eta\rangle)\frac{\d \xi}{\langle\xi\rangle}\frac{\d \eta}{\langle\eta\rangle}.$$
Abusing notation slightly, and still denoting by $\{\C_{n,j}\}$ the projection of the caps defined in \eqref{2DCap} and \eqref{2DCap0} onto the $\xi$-plane, we have that
\begin{equation}\label{SumOverCaps}
\|Tf\|_{L^4(\R^3)}^4
\lesssim
\sum_{(m,\ell),(n,j)} \int_{\C_{m,\ell}} \int_{\C_{n,j}}
|f(\xi)|^2|f(\eta)|^2 (\sigma\ast\sigma)(\xi+\eta,\langle\xi\rangle+\langle\eta\rangle)\frac{\d \xi}{\langle\xi\rangle}\frac{\d \eta}{\langle\eta\rangle}.
\end{equation}
Here, the sum is taken over all pairs $(m,\ell),(n,j)$ with $0\leq n\leq m$, and $0\leq \ell<2^m$, $0\leq j<2^n$.
We seek to obtain some control over the height $s$, defined via the equation
\begin{equation}\label{heights}
s^2:=(\langle\xi\rangle+\langle\eta\rangle)^2-|\xi+\eta|^2=2(1+\langle\xi\rangle\langle\eta\rangle-\xi\cdot\eta).
\end{equation}
With this purpose in mind, we split the sum in \eqref{SumOverCaps} into two pieces, depending on whether or not the direction of the caps is approximately the same. In the former case, the bound will be in terms of the distance between the centers of the caps, whereas in the latter case one obtains an improved bound in terms of the angular separation between the caps. See Figure \ref{fig:Cap_Interaction_Slice} for an illustration of two extreme cases of this separation.

Let $S \subset \R$. In what follows, we say that $x \in S \, {\rm mod} \, 1$ if $x + k \in S$ for some $k \in \Z$. Analogously, for $m \in \N \cup \{0\}$, we say that $x \in S \, {\rm mod} \, 2^m$ if $x + k2^m \in S$ for some $k \in \Z$. We also define
\begin{equation}\label{July23_8:33am}
\|x\|:= \min\{|x-k| : k \in \Z\}
\end{equation}
for the distance of $x$ to the nearest integer.
\smallskip

\noindent {\it Case 1. $\frac{\ell}{2^m} - \frac{j}{2^n} \in \left[ -\frac2{2^n}, \frac2{2^n}\right) \, {\rm mod} \, 1$.}
In this case, we are considering indices $\ell$ belonging to 
$$A_{m,n}^{(j)}:=\left\{0 \leq \ell < 2^m : \ell \in \left[2^{m-n}(j-2) , 2^{m-n}(j+2)\right) \, {\rm mod} \,2^m\right\},$$
a set of cardinality $\big|A_{m,n}^{(j)}\big|=2^{m-n+2}$. We seek to estimate the sum
$$S_1:=\sum_{n\geq 0}\sum_{m\geq n}\sum_{0\leq j<2^n}\sum_{\ell\in A_{m,n}^{(j)}} \int_{\C_{m,\ell}} \int_{\C_{n,j}}
|f(\xi)|^2|f(\eta)|^2 (\sigma\ast\sigma)(\xi+\eta,\langle\xi\rangle+\langle\eta\rangle)\frac{\d \xi}{\langle\xi\rangle}\frac{\d \eta}{\langle\eta\rangle}.$$
Note that $x\mapsto{\langle x\rangle}{|x|^{-1}}$ is a decreasing function of $|x|$. 
For $\xi\in\C_{n,j}$ and $\eta\in\C_{m,\ell}$, we can estimate the height $s$ defined in \eqref{heights} from below, as follows:
$$s^2\gtrsim \langle\xi\rangle\langle\eta\rangle-\xi\cdot\eta
\geq |\xi||\eta|\left(\frac{\langle\xi\rangle\langle\eta\rangle}{|\xi||\eta|}-1\right)
\geq \frac14\, 2^{n+1} \,2^{m+1}\left(\frac{\langle2^{n+1}\rangle\langle2^{m+1}\rangle}{2^{n+1}2^{m+1}}-1\right).$$
Writing $2^{n+1}=\sinh(\sinh^{-1}(2^{n+1}))$ and $\langle2^{n+1}\rangle=\cosh(\sinh^{-1}(2^{n+1}))$, and similarly for $m$, we have that
$$s^2\gtrsim\cosh\big(\sinh^{-1}(2^{m+1})-\sinh^{-1}(2^{n+1})\big).$$
Since $\sinh^{-1}(x)=\log(x+\sqrt{x^2+1})$ and $\cosh(x)\gtrsim\exp(x)$, we can further estimate
\begin{align*}
s^2
&\gtrsim\cosh(\log(2^{m+1}+\sqrt{2^{2m+2}+1})-\log(2^{n+1}+\sqrt{2^{2n+2}+1}))\\
&\gtrsim\exp(\log(2^{m+1}+\sqrt{2^{2m+2}+1})-\log(2^{n+1}+\sqrt{2^{2n+2}+1}))\\
&=\frac{2^{m+1}+\sqrt{2^{2m+2}+1}}{2^{n+1}+\sqrt{2^{2n+2}+1}}\gtrsim 2^{m-n}.
\end{align*}

Under the same assumptions on $\xi,\eta$, it follows from  the Lorentz invariance \eqref{LorentzInv} and Lemma \ref{2DConvolutions} (a) that 
$$(\sigma\ast\sigma)(\xi+\eta,\langle\xi\rangle+\langle\eta\rangle)
=(\sigma\ast\sigma)(0,s)\lesssim s ^{-1} \lesssim2^{-\frac{m-n}2}.$$
The sum $S_1$ can then be estimated  by
$$S_1\lesssim\sum_{n\geq 0}\sum_{m\geq n}\sum_{0\leq j<2^n}  \frac{\|f\|_{L^2(\C_{n,j})}^2}{2^{\frac{m-n}2}}\Bigg(\sum_{\ell\in A_{m,n}^{(j)}}\|f\|_{L^2(\C_{m,\ell})}^2\Bigg),$$
where the inner sum is trivially bounded by
$$\sum_{\ell\in A_{m,n}^{(j)}}\|f\|_{L^2(\C_{m,\ell})}^2\leq\min\big\{1,\eps^2 \,2^{m-n+2}\big\}.$$
It follows that
\begin{equation}\label{EstimatingS1}
S_1\lesssim \sum_{n\geq 0}\|f\|_{L^2(\mathcal{A}_{n})}^2
\left(\sum_{m\geq n}  \frac{\min\{1,\eps^2\, 2^{m-n+2}\}}{2^{\frac{m-n}2}}\right),
\end{equation}
where ${\mathcal{A}_n}$ denotes the annulus defined in \eqref{DefAnnulus}.
We  estimate the inner sum on the right-hand side of \eqref{EstimatingS1} by breaking it up in two pieces, according to whether or not the integer $\kappa:=m-n+2$ satisfies $\eps^2 2^{\kappa}< 1$, or equivalently $\kappa<2\log_2(\eps^{-1})$.
We obtain
\begin{align*}
\sum_{m\geq n}  \frac{\min\{1,\eps^2 \,2^{m-n+2}\}}{2^{\frac{m-n}2}}
=\sum_{\kappa<2\log_2(\eps^{-1})}\eps^2 \,2^{\frac {\kappa}2+1}+\sum_{\kappa\geq2\log_2(\eps^{-1})} 2^{-\frac {\kappa}2+1}
\lesssim \eps,
\end{align*}
where both geometric sums were estimated by their largest terms. Plugging this back into \eqref{EstimatingS1}, and recalling that $\|f\|_{L^2(\H^2)}=1$ and that the annuli in the family $\{\A_n\}_{n\in\N}$ are disjoint, we finally obtain $S_1\lesssim\eps$.\\

\noindent {\it Case 2. $\frac{\ell}{2^m} - \frac{j}{2^n} \in \left[-\frac12,\frac12\right) \setminus \left[ -\frac2{2^n}, \frac2{2^n}\right) \, {\rm mod}\, 1$.} Note that this case is non-empty only if $n \geq 3$. Let $\xi\in\C_{n,j}$ and $\eta\in\C_{m,\ell}$.
Setting $\theta_{n,j}:=\arg(\xi)$ and $\theta_{m,\ell}:=\arg(\eta)$, we note that, since $n\leq m$,
\begin{equation}\label{alphaminusbeta}
\left|\frac{\theta_{n,j}-\theta_{m,\ell}}{2\pi}-\Big(\frac{j}{2^n}-\frac{\ell}{2^m}\Big)\right| < \frac1{2^{n}}.
\end{equation}

Before we move on, let us make a useful observation. Let 
$$\Gamma_k = \left\{\xi \in \R^2: \frac{k\pi}{2} \leq \arg(\xi) < \frac{(k+1)\pi}{2}\right\}, \  {\rm for} \  0 \leq k\leq 3,$$ 
be the four quadrants of the $\xi$-plane. We may split the function $f$ into four pieces writing $f  = \sum_{k=0}^3 f^{(k)}$, where $f^{(k)} = f\one_{\Gamma_k}$. Since $\|Tf\|_{L^4(\R^3)}^4 \lesssim  \sum_{k=0}^3 \|Tf^{(k)}\|_{L^4(\R^3)}^4$ it suffices to prove \eqref{July15_8:07pm} for each function $f^{(k)}$ separately. {\it In particular, throughout the rest of this proof we may assume that our $f$ is supported in one of the quadrants, say $\Gamma_0$.} Note that this yields $|\theta_{n,j}-\theta_{m,\ell}|\leq\frac{\pi}2$ in the support of $f$ and hence
$$1-\cos(\theta_{n,j}-\theta_{m,\ell})\gtrsim |\theta_{n,j}-\theta_{m,\ell}|^2.$$
As a consequence,
$$s^2\gtrsim \langle\xi\rangle\langle\eta\rangle-\xi\cdot\eta\geq |\xi||\eta|(1-\cos(\theta_{n,j}-\theta_{m,\ell}))
\gtrsim 2^{m+n}|\theta_{n,j}-\theta_{m,\ell}|^2$$
in the support of $f$. Invoking Lemma \ref{2DConvolutions} (a) as before, we have that
\begin{equation}\label{July15_8:48pm}
(\sigma\ast\sigma)(\xi+\eta,\langle\xi\rangle+\langle\eta\rangle)
\lesssim 
2^{-\frac{m+n}2} |\theta_{n,j}-\theta_{m,\ell}|^{-1}.
\end{equation}

We seek to estimate the sum
\begin{equation}\label{July15_8:49pm}
S_2:=\sum_{n\geq 0}\sum_{m\geq n}\sum_{0\leq j<2^n}\sum_{\ell\notin A_{m,n}^{(j)}} \int_{\C_{m,\ell}} \int_{\C_{n,j}}
|f(\xi)|^2|f(\eta)|^2 (\sigma\ast\sigma)(\xi+\eta,\langle\xi\rangle+\langle\eta\rangle)\frac{\d \xi}{\langle\xi\rangle}\frac{\d \eta}{\langle\eta\rangle}.\end{equation}
For fixed indices $0\leq n\leq m$ and $0\leq j<2^n$, we consider the block
\begin{equation}\label{BlocksB}
B_{m,n}^{(j,k)}:=\left\{0 \leq \ell < 2^m : \ell \in \left[2^{m-n}(j+k) , 2^{m-n}(j+k+1)\right) \, {\rm mod} \,2^m\right\},
\end{equation}
for $-2\leq k\leq 2^n-3$. Note that $\big|B_{m,n}^{(j,k)}\big|=2^{m-n}$.
Moreover, since $A_{m,n}^{(j)}$ can be partitioned as a disjoint union,
$$A_{m,n}^{(j)}=B_{m,n}^{(j,-2)}\cup B_{m,n}^{(j,-1)} \cup B_{m,n}^{(j,0)} \cup B_{m,n}^{(j,1)}\,,$$
the fact that $\ell\notin A_{m,n}^{(j)}$ is equivalent to $\ell\in B_{m,n}^{(j,k)}$, for some $2\leq k\leq 2^n-3$. If $\ell\in B_{m,n}^{(j,k)}\setminus A_{m,n}^{(j)}$, then condition \eqref{BlocksB} can be rewritten as
$$\frac{\ell}{2^m}-\frac{j}{2^n} \in \left[\frac{k}{2^n}, \frac{k+1}{2^n}\right)\,{\rm mod}\, 1\,,$$
and it follows from \eqref{alphaminusbeta} that
\begin{equation}\label{July15_8:50pm}
|\theta_{n,j}-\theta_{m,\ell}|\,\gtrsim\left\|\frac{k}{2^n}\right\|,
\end{equation}
where $\|x\|$ was defined in \eqref{July23_8:33am}. Associated to these index blocks, we define the set
$$\mathcal{B}_{m,n}^{(j,k)}:=\bigcup_{\ell\in B_{m,n}^{(j,k)}}\C_{m,\ell}.$$

From \eqref{July15_8:48pm}, \eqref{July15_8:49pm} and \eqref{July15_8:50pm} we get
\begin{align*}
S_2 & \lesssim
\sum_{n\geq 0}\sum_{m\geq n}\sum_{0\leq j<2^n}\sum_{k=2}^{2^n-3}\sum_{\ell\in B_{m,n}^{(j,k)}} \|f\|_{L^2(\C_{m,\ell})}^2\|f\|_{L^2(\C_{n,j})}^2 2^{-\frac{m+n}2} |\theta_{n,j}-\theta_{m,\ell}|^{-1}\\
& \lesssim
\sum_{n\geq 0}\sum_{m\geq n}\sum_{0\leq j<2^n}\sum_{k=2}^{2^n-3} \|f\|_{L^2\big(\mathcal{B}_{m,n}^{(j,k)}\big)}^2\|f\|_{L^2(\C_{n,j})}^2 2^{-\frac{m+n}2}\left\|\frac{k}{2^n}\right\|^{-1}\\
& \leq \sum_{n\geq 0}\sum_{m\geq n}\sum_{0\leq j<2^n}\!\!\!\sum_{k=2}^{2^{n-1}-1} \left( \|f\|_{L^2\big(\mathcal{B}_{m,n}^{(j,k)}\big)}^2 +  \|f\|_{L^2\big(\mathcal{B}_{m,n}^{(j,2^n - k - 1)}\big)}^2\right) \|f\|_{L^2(\C_{n,j})}^2 2^{-\frac{m+n}2} \left(\frac{k}{2^n}\right)^{-1}.
\end{align*}
In order to make use of the trivial bound 
\begin{equation}\label{TrivialBound2}
\|f\|_{L^2\big(\mathcal{B}_{m,n}^{(j,k)}\big)}^2\leq \min\{1,\eps^2 2^{m-n}\},
\end{equation}
we invoke the Cauchy--Schwarz inequality on the innermost sum of
\begin{align*}
S_2&\lesssim
\sum_{n\geq 0}\sum_{m\geq n}2^{-\frac{m-n}2}\sum_{0\leq j<2^n} \|f\|_{L^2(\C_{n,j})}^2
\left(\sum_{k=2}^{2^{n-1}-1} \frac{\|f\|_{L^2\big(\mathcal{B}_{m,n}^{(j,k)}\big)}^2 + \|f\|_{L^2\big(\mathcal{B}_{m,n}^{(j,2^n - k - 1)}\big)}^2}{k} \right)\\
&\lesssim
\sum_{n\geq 0}\sum_{m\geq n}2^{-\frac{m-n}2}\sum_{0\leq j<2^n} \|f\|_{L^2(\C_{n,j})}^2
\Bigg(\sum_{k=2}^{2^n-3} {\|f\|_{L^2\big(\mathcal{B}_{m,n}^{(j,k)}\big)}^4}\Bigg)^{\frac 12}
\Bigg(\sum_{k=2}^{2^{n-1}-1} \frac1{k^2} \Bigg)^{\frac 12}.
\end{align*}
\begin{figure}
  \centering
  \includegraphics[height=6cm]{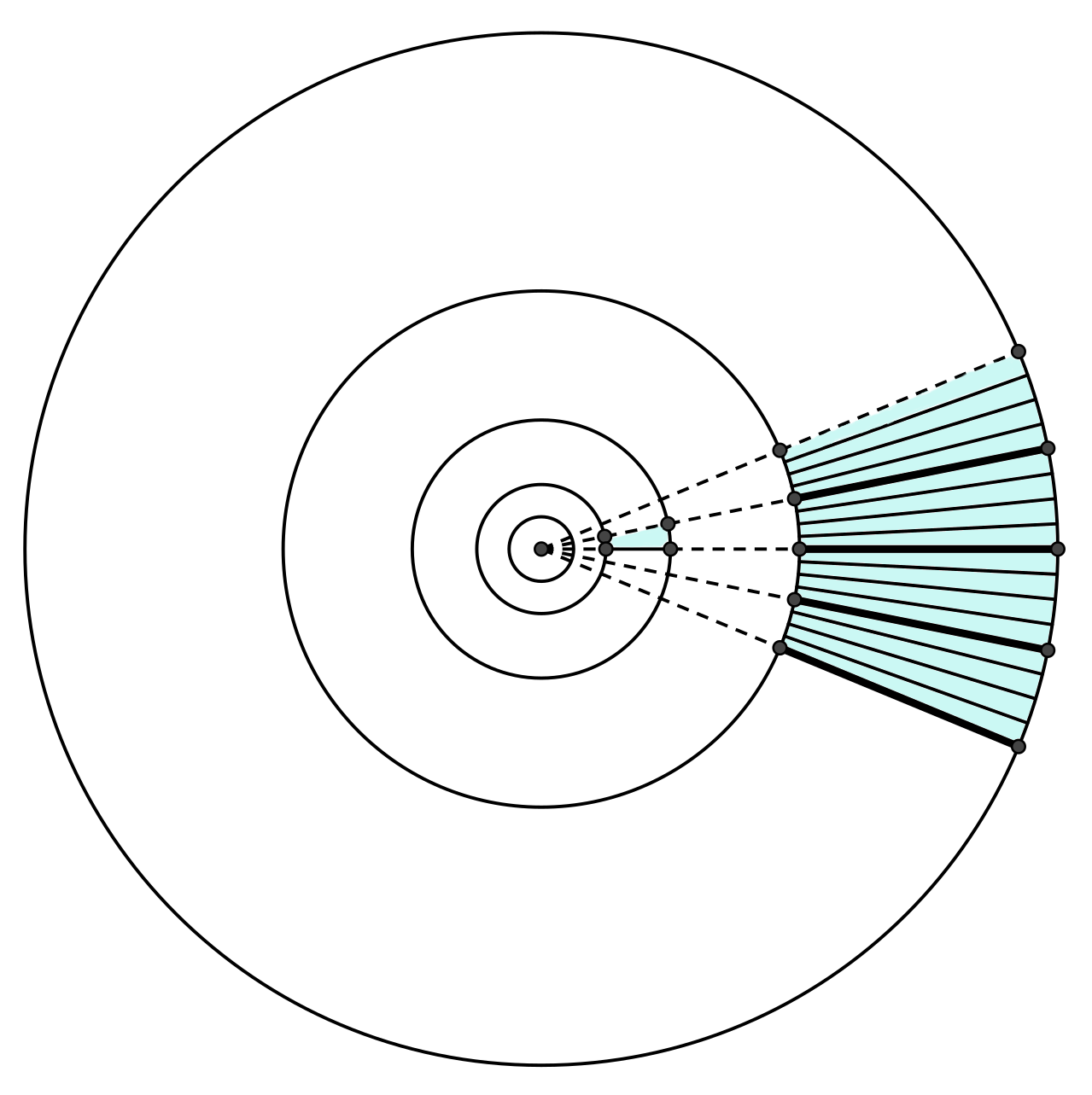} \ \ \ \ \ \ 
    \includegraphics[height=6cm]{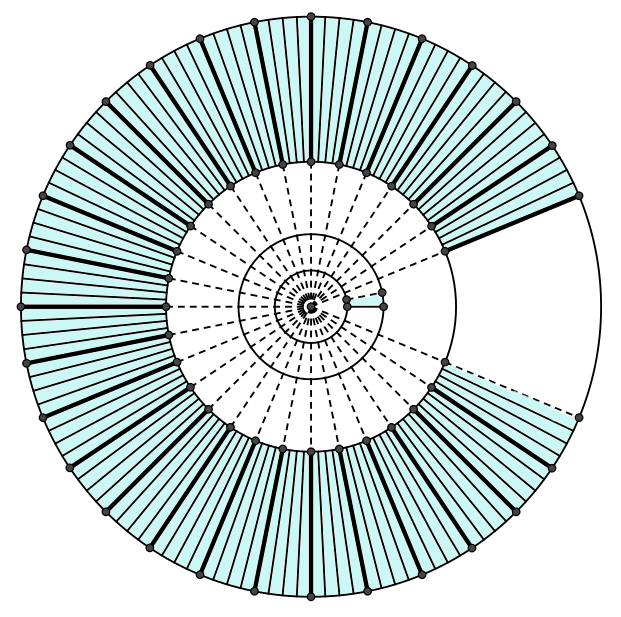} 
    \caption{For a fixed $j$, the regions of Case 1 (on the left) and Case 2 (on the right) of the proof of Proposition \ref{SpecialCap2DMainProp}.}
\label{fig:Cap_Interaction_Slice}
\end{figure}
Recalling \eqref{TrivialBound2}, and noting that the unions
$$\bigcup_{j=0}^{2^n-1} \C_{n,j}=\mathcal{A}_n, \text{ and }\bigcup_{k=2}^{2^n-3}\mathcal{B}_{m,n}^{(j,k)}\subseteq \mathcal{A}_m$$
are disjoint, we have that
$$S_2\lesssim
\sum_{n\geq 0}\sum_{m\geq n}2^{-\frac{m-n}2}
\min\{1,\eps\, 2^{\frac{m-n}2}\}
\|f\|_{L^2(\mathcal{A}_m)}
\|f\|_{L^2(\mathcal{A}_{n})}^2.
$$
We use $\|f\|_{L^2(\mathcal{A}_m)}\leq1$ and estimate the inner sum on the right-hand side of
\begin{equation}\label{EstimatingS2}
S_2\lesssim
\sum_{n\geq 0}
\|f\|_{L^2(\mathcal{A}_{n})}^2
\Bigg(\sum_{m\geq n}2^{-\frac{m-n}2}
\min\{1,\eps \,2^{\frac{m-n}2}\}\Bigg)
\end{equation}
as before. 
In more detail, set $\kappa:=m-n$ and break up the sum in two pieces, depending on whether or not the condition $\eps \,2^{\frac {\kappa}2} < 1$ is satisfied. This yields:
$$\sum_{m\geq n}2^{-\frac{m-n}2}
\min\big\{1,\eps\, 2^{\frac{m-n}2}\big\}\leq
\sum_{\kappa< 2\log_2(\eps^{-1})} 2^{-\frac {\kappa}2}\,\eps\, 2^{\frac {\kappa}2}
+\sum_{\kappa\geq 2\log_2(\eps^{-1})} 2^{-\frac {\kappa}2}\lesssim\eps\log_2(\eps^{-1}).$$
Plugging this back into \eqref{EstimatingS2}, we finally obtain that $S_2\lesssim\eps\log_2(\eps^{-1})$.
This completes the proof.
\end{proof}

\begin{proposition}\label{SpecialCap2D}
Let $d=2$ and $4 \leq p < 6$. Let $\{f_n\}_{n\in\N}\subset L^2(\H^2)$ be an extremizing sequence for inequality \eqref{ExtensionInequality}, normalized so that $\|f_n\|_{L^2(\H^2)}=1$ for each $n \in \N$. There exists a universal constant $\eta_{2,p}>0$ and $n_0 \in \N$, such that for any $n \geq n_0$ there exist $s_n\in (-1,1)$ and $\varphi_n\in[0,2\pi)$ verifying
$$\| (L^{s_n}\circ R_{\varphi_n})^*f_n\|_{L^2(\D)}
\geq \eta_{2,p}\,,$$
where $\D:=\{(\xi,\tau)\in\H^2: |\xi|\leq 2\sqrt{2}\pi\}$.
\end{proposition}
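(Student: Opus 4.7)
The plan is to combine the log-convexity interpolation \eqref{LogConvexity}, the bilinear $L^4$-estimate of Proposition \ref{SpecialCap2DMainProp}, and the cap-moving Lemma \ref{2Dtessellation}(b). First, I would extract a universal lower bound at the lower endpoint $p=4$: the hypothesis $\|Tf_n\|_{L^p(\R^3)} \to {\bf H}_{2,p} > 0$, combined with \eqref{LogConvexity} and with the a priori bounds $\|Tf_n\|_{L^4(\R^3)} \leq {\bf H}_{2,4}$ and $\|Tf_n\|_{L^6(\R^3)} \leq {\bf H}_{2,6}$ (which follow from $\|f_n\|_{L^2(\H^2)}=1$), yields some $\gamma_p > 0$ and $n_0 \in \N$ with $\|Tf_n\|_{L^4(\R^3)} \geq \gamma_p$ for every $n \geq n_0$. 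When $p=4$ no interpolation is needed.

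Next, I would run a contradiction argument to locate a cap carrying nontrivial mass. Suppose that $\sup_{m,j} \|f_n\|_{L^2(\C_{m,j})} \leq \varepsilon_n$ along a subsequence with $\varepsilon_n \to 0^+$. Then Proposition \ref{SpecialCap2DMainProp} would give
\[
\gamma_p^4 \,\leq\, \|Tf_n\|_{L^4(\R^3)}^4 \,\lesssim\, \varepsilon_n \log_2(\varepsilon_n^{-1}) \,\longrightarrow\, 0,
\]
a contradiction. Hence there exist a universal constant $\varepsilon_{2,p} > 0$ and, for each $n \geq n_0$, a cap $\C_{m_n, j_n}$ with $\|f_n\|_{L^2(\C_{m_n, j_n})} \geq \varepsilon_{2,p}$.

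Finally, Lemma \ref{2Dtessellation}(b) supplies $t_n \in [0,1)$ and $\varphi_n \in [0, 2\pi)$ such that a suitable member of $\L^+$ assembled from $L^{t_n}$ and $R_{\varphi_n}$ carries $\C_{m_n, j_n}$ inside $\D$. Since every element of $\L^+$ preserves $\sigma$ and thus acts isometrically on $L^2(\H^2)$, the Lorentz change of variables transfers the mass of $f_n$ concentrated in $\C_{m_n, j_n}$ into mass inside $\D$ for the pullback by the appropriate inverse map. Matching parameters to the stated form gives $\|(L^{s_n} \circ R_{\varphi_n})^* f_n\|_{L^2(\D)} \geq \varepsilon_{2,p}$, and one sets $\eta_{2,p} := \varepsilon_{2,p}$. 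The technical heart is the middle step, where Proposition \ref{SpecialCap2DMainProp} converts the qualitative statement ``no single cap carries much mass'' into the quantitatively sharp decay $\|Tf\|_{L^4}^4 \lesssim \varepsilon\log_2(\varepsilon^{-1})$ that drives the dichotomy; the remaining steps are a routine log-convexity interpolation and a symmetry reduction via Lorentz invariance of $\sigma$.
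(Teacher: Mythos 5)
Your proof is correct and takes essentially the same route as the paper: log-convexity to get a universal lower bound on $\|Tf_n\|_{L^4}$, then the $L^4$ decay of Proposition \ref{SpecialCap2DMainProp} to force a cap with mass $\geq \gamma_p$, then Lemma \ref{2Dtessellation}(b) and Lorentz invariance to move that cap into $\D$. The only cosmetic difference is that you phrase the middle step as a contradiction along a subsequence with $\varepsilon_n \to 0^+$ whereas the paper simply fixes $\eps$ small enough using the universality of the implicit constant in \eqref{July15_8:07pm}; both yield the required uniform constant $\eta_{2,p}$.
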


\begin{proof}
Let $n_0 \in \N$ be such that, for $n \geq n_0$, we have 
$$\|T f_n\|_{L^p(\R^3)}\geq\frac{{\bf H}_{2,p}}2.$$
Fix $n\geq n_0$. We claim that there exists $\gamma_p>0$, depending only on $p$, such that
\begin{equation}\label{PreSpecialCap}
\sup_{m,\ell}\|f_n\|_{L^2(\C_{m,\ell})}\geq \gamma_p>0,
\end{equation}
where the supremum is taken over integers $m\geq 0$ and $0\leq\ell<2^m$.
For otherwise we could  appeal to Proposition \ref{SpecialCap2DMainProp} to ensure
$$\frac{{\bf H}_{2,p}}2\leq \|T f_n\|_{L^p(\R^3)}
\leq \|Tf_n\|_{L^4(\R^3)}^\theta \|Tf_n\|_{L^6(\R^3)}^{1-\theta}
\lesssim (\eps\log_2(\eps^{-1}))^{\frac{\theta}4},
$$
which is a contradiction provided $\theta>0$ and $\eps>0$ is sufficiently small.
Knowing \eqref{PreSpecialCap}, it is now a simple matter to invoke Lemma \ref{2Dtessellation} (b) and 
 conclude the proof of the proposition.
\end{proof}

\section{Concentration Compactness}\label{sec:CC}
In this section, we adapt parts of the work of Fanelli, Vega and Visciglia \cite{FVV11,FVV12} in order to
 complete the proof of Theorem \ref{Thm2}.
We rely on the following key result from \cite[Proposition 1.1]{FVV11}.
\begin{lemma} [cf. \cite{FVV11}]\label{FVVLemma}
  Let  $\mathcal{H}$ be a Hilbert space and $S:\mathcal{H}\to L^p(\R^d)$ be a bounded linear operator with $p\in(2,\infty)$.  Consider $\{h_n\}_{n\in\N}\subset \mathcal{H}$ such that
\begin{enumerate}
    \item[(i)] $\lim_{n\to\infty}\|h_n\|_{\mathcal{H}}=1$;
    \item[(ii)] $\lim_{n\to\infty}\|S(h_n)\|_{L^p(\R^d)}=\|S\|_{\mathcal{H}\to L^p(\R^d)}$;
    \item[(iii)] $h_n\rightharpoonup h\neq 0$;
    \item[(iv)] $S(h_n)\rightarrow S(h)$ almost everywhere in $\R^d$.
\end{enumerate}
Then $h_n\rightarrow h$ in $\mathcal{H}$. 
In particular, $\|h\|_{\mathcal{H}}=1$ and $\|S(h)\|_{L^p(\R^d)}=\|S\|_{\mathcal{H}\to L^p(\R^d)}$.
\end{lemma}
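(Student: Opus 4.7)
The plan is to combine the parallelogram identity on the Hilbert side with the Brezis--Lieb lemma on the $L^p$ side, and then exploit the strict subadditivity of $t \mapsto t^{p/2}$ on $[0,1]$ when $p>2$.

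First, I would set $M := \|S\|_{\mathcal{H} \to L^p(\R^d)}$ and $v_n := h_n - h$. Weak convergence (iii) gives $\langle v_n, h\rangle_{\mathcal{H}} \to 0$, so that
\begin{equation*}
\|h_n\|_{\mathcal{H}}^2 = \|v_n\|_{\mathcal{H}}^2 + 2\,\Re \langle v_n, h\rangle_{\mathcal{H}} + \|h\|_{\mathcal{H}}^2 = \|v_n\|_{\mathcal{H}}^2 + \|h\|_{\mathcal{H}}^2 + o(1).
\end{equation*}
Together with (i) this yields $\|v_n\|_{\mathcal{H}}^2 \to 1 - \|h\|_{\mathcal{H}}^2$. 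Weak lower semicontinuity of the norm also ensures $0 < \|h\|_{\mathcal{H}} \leq 1$, so we may set $a := \|h\|_{\mathcal{H}}^2 \in (0,1]$.

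Next, linearity of $S$ gives $S(h_n) = S(h) + S(v_n)$, and hypothesis (iv) ensures $S(v_n) \to 0$ almost everywhere in $\R^d$. Since $\{S(h_n)\}$ is bounded in $L^p(\R^d)$ by (ii), the Brezis--Lieb lemma applies and yields
\begin{equation*}
\|S(h_n)\|_{L^p(\R^d)}^p - \|S(v_n)\|_{L^p(\R^d)}^p \to \|S(h)\|_{L^p(\R^d)}^p.
\end{equation*}
Invoking (ii) on the left-hand side, and using the operator-norm bounds $\|S(v_n)\|_{L^p(\R^d)} \leq M\,\|v_n\|_{\mathcal{H}}$ and $\|S(h)\|_{L^p(\R^d)} \leq M\,\|h\|_{\mathcal{H}}$ on the right, after dividing through by $M^p > 0$ one finds in the limit
\begin{equation*}
1 \leq (1 - a)^{p/2} + a^{p/2}.
\end{equation*}

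The decisive observation is elementary: since $p > 2$, one has $t^{p/2} \leq t$ for every $t \in [0,1]$, with equality if and only if $t \in \{0,1\}$. Adding $(1-a)^{p/2} \leq 1-a$ and $a^{p/2} \leq a$ gives $(1-a)^{p/2} + a^{p/2} \leq 1$, with equality iff $a \in \{0,1\}$. Combined with the previous display and $a > 0$, this forces $a = 1$, i.e., $\|h\|_{\mathcal{H}} = 1$ and $\|v_n\|_{\mathcal{H}} \to 0$; hence $h_n \to h$ strongly in $\mathcal{H}$, and continuity of $S$ together with (ii) then yields $\|S(h)\|_{L^p(\R^d)} = M$. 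The only genuine step is verifying the applicability of Brezis--Lieb, which is immediate from (iv) and the $L^p$-boundedness of $\{S(h_n)\}$; the remainder is a one-line convexity argument together with the Hilbert-space expansion.
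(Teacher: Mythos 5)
Your proof is correct. The paper does not reprove this lemma --- it is quoted verbatim from \cite{FVV11}, Proposition 1.1 --- and your argument is precisely the Brezis--Lieb argument used there: decompose $h_n = h + v_n$, use weak convergence to split the Hilbert norm, apply Brezis--Lieb to split the $L^p$ norm, bound both pieces by the operator norm, and conclude from the strict concavity of $t\mapsto t^{p/2}$ on $[0,1]$ (for $p>2$) that $\|h\|_{\mathcal{H}}=1$, hence $\|v_n\|_{\mathcal{H}}\to 0$.
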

The argument which we will present next works as long as one can produce a special cap, as was done in \S\ref{sec:SpecialCap} in the lower dimensional cases $d\in\{1,2\}$.
We state the next two results in general dimensions $d$, thereby guaranteeing the existence of extremizers, conditionally on the existence of a special cap.

\begin{proposition}\label{NonZeroWeakLimit}
Let $d\geq 1$ and let $p$ be such that
  \begin{equation*}
   \begin{cases}
  6 < p< \infty, \text{ if } d=1;\\
  \frac{2(d+2)}{d} < p\leq  \frac{2(d+1)}{d-1}, \text{ if } d\geq 2.
      \end{cases} 
\end{equation*}
Assume the existence of two universal constants $\eta=\eta_{d,p}>0$ and $r = r_{d,p} >0$ verifying the following property: for any extremizing sequence $\{f_n\}_{n\in\N}\subset L^2(\H^d)$ for inequality \eqref{ExtensionInequality}, normalized so that $\|f_n\|_{L^2(\H^d)}=1$ for each $n \in \N$, there exists $n_0 \in \N$ such that 
$$\|f_n\|_{L^2(\D)}\geq \eta$$
for any $n \geq n_0$, where $\D:=\{(\xi,\tau)\in\H^d: |\xi|\leq r\}$. Then there exists $(x_n,t_n)\in \R^d\times\R$ such that the sequence $\{g_n\}_{n\in\N}$ defined by
$$g_n(y):=e^{i x_n\cdot y}e^{it_n\langle y\rangle}f_n(y)$$
admits a subsequence that converges weakly to a nonzero limit in $L^2(\H^d)$.
\end{proposition}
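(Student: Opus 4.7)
The plan is to reduce the problem to the Klein--Gordon setting and then perform a concentration-compactness argument on a low-frequency truncation of the initial data. Setting $\widehat{G_n}(y):=f_n(y)/\langle y\rangle$, we have $\|G_n\|_{H^{1/2}(\R^d)}=\|f_n\|_{L^2(\H^d)}=1$. A direct computation shows that the modulation $f_n(y)\mapsto e^{ix_n\cdot y}e^{it_n\langle y\rangle}f_n(y)$ corresponds to the spacetime translation $G_n\mapsto (e^{it_n\sqrt{1-\Delta}}G_n)(\cdot+x_n)$ of the initial data, and weak convergence in $L^2(\H^d)$ is equivalent to weak convergence in $H^{1/2}(\R^d)$. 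Thus the task reduces to finding $(x_n,t_n)\in\R^d\times\R$ such that a subsequence of $(e^{it_n\sqrt{1-\Delta}}G_n)(\cdot+x_n)$ converges weakly in $H^{1/2}$ to a non-zero limit.

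The special cap hypothesis becomes $\int_{|y|\leq r}|\widehat{G_n}(y)|^2\langle y\rangle\,dy\geq\eta^2$, so the low-frequency truncation $G_n^{\leq r}$ with $\widehat{G_n^{\leq r}}=\widehat{G_n}\one_{|y|\leq r}$ satisfies $\|G_n^{\leq r}\|_{H^{1/2}}\geq\eta$. Being band-limited to the fixed compact ball $B_r$, the sequence $\{G_n^{\leq r}\}$ is uniformly bounded in $L^\infty(\R^d)$ by Bernstein, and a reproducing-kernel computation yields the Plancherel--Polya-type estimate $\|G_n^{\leq r}\|_{L^\infty(\R^d)}^2\lesssim_r\sup_{x\in\R^d}\|G_n^{\leq r}\|_{L^2(B(x,1))}^2$. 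Passing to a subsequence, $G_n^{\leq r}\rightharpoonup h^*$ in $H^{1/2}$. If $h^*\neq0$, extract a further subsequence so that $G_n-G_n^{\leq r}$ also converges weakly in $H^{1/2}$; the two weak limits have disjoint Fourier supports and are therefore $H^{1/2}$-orthogonal, so the overall weak limit of $\{G_n\}$ is non-zero, and we take $(x_n,t_n)=(0,0)$.

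The harder case is $h^*=0$. Applying the concentration-compactness principle of P.-L.~Lions to the bounded sequence $\{G_n^{\leq r}\}\subset L^2(\R^d)$, we must rule out the vanishing alternative $\sup_{x\in\R^d}\|G_n^{\leq r}\|_{L^2(B(x,1))}\to0$, at which point a pigeonhole choice of $x_n$ furnishes the desired translation: the band-limited structure of $G_n^{\leq r}$ gives uniform boundedness in any $H^s$ of $G_n^{\leq r}(\cdot+x_n)$, and Rellich's compact embedding then provides strong $L^2_{\mathrm{loc}}$-convergence along a further subsequence to a non-zero function $h^{**}$, which is the sought-after non-zero weak $H^{1/2}$ limit (with $t_n=0$).

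The main obstacle is precisely the vanishing step. Band-limitedness alone does not preclude vanishing in Lions' sense---as the one-dimensional family $G_n(x)=n^{-1/2}\psi(x/n)$ illustrates, a band-limited sequence bounded below in $L^2$ can spread in space by letting its frequency support shrink to the origin---so one must bring the extremizing hypothesis into play. The idea is to observe that if $\|G_n^{\leq r}\|_{L^\infty}\to0$ (the Plancherel--Polya consequence of vanishing), then combining this with a refined Strichartz-type estimate that controls $\|Tf\|_{L^p(\R^{d+1})}$ in terms of an $L^\infty$-type sup together with $\|f\|_{L^2(\H^d)}$ would force $\|Tf_n\|_{L^p}$ to be strictly smaller than ${\bf H}_{d,p}$ for $n$ large, contradicting extremality of $\{f_n\}$.
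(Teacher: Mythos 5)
Your reduction to the Klein--Gordon side and the low-frequency truncation $G_n^{\leq r}$ are both sound, and you correctly identify that vanishing in the sense of Lions is the obstacle that must be ruled out using the extremizing hypothesis. However, there is a genuine gap at precisely that point, and it is not merely a matter of filling in details.

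First, the ``refined Strichartz-type estimate'' you invoke is left unstated, and the natural candidate does not do what you need. The relevant estimate is simply log-convexity of Lebesgue norms between $L^{2(d+2)/d}_{x,t}$ and $L^\infty_{x,t}$:
\begin{equation*}
\|T(f_{n,0})\|_{L^p(\R^{d+1})}\leq \|T(f_{n,0})\|_{L^{\frac{2(d+2)}d}(\R^{d+1})}^{\frac{2(d+2)}{dp}}\|T(f_{n,0})\|_{L^\infty(\R^{d+1})}^{\frac{d(p-2)-4}{dp}},
\end{equation*}
where $f_{n,0}:=f_n\one_\D$ (so $g_{n,0}=G_n^{\leq r}$ on the Klein--Gordon side). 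This bounds $\|T(f_{n,0})\|_{L^p}$ by the \emph{space-time} $L^\infty$ norm, that is, by $\sup_{x,t}|e^{it\sqrt{1-\Delta}}G_n^{\leq r}(x)|$. Your proposed contradiction assumes instead that $\|G_n^{\leq r}\|_{L^\infty(\R^d)}\to 0$, which is the supremum \emph{at time $t=0$ only}. Decay of the latter does not force decay of the former: the dispersive flow $e^{it\sqrt{1-\Delta}}$ can re-focus spatially spread, band-limited initial data at some later time $t_n$, possibly with $|t_n|\to\infty$. So the extremizing hypothesis gives no contradiction to spatial vanishing at $t=0$, and this step of your argument cannot be completed as stated.

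Second, and relatedly, you explicitly restrict to $t_n=0$ throughout (``with $t_n=0$'' in the last step; ``we take $(x_n,t_n)=(0,0)$'' in the first case). This defeats the purpose of the proposition, which allows $t_n$ to be chosen. The paper's proof proceeds differently and avoids the Lions dichotomy altogether: using the cap hypothesis and the triangle inequality, it shows $\|T(f_{n,0})\|_{L^p(\R^{d+1})}\geq\delta\,{\bf H}_{d,p}$ for $n$ large, then applies the interpolation above to obtain the uniform lower bound $\|T(f_{n,0})\|_{L^\infty(\R^{d+1})}\geq\gamma>0$. It then selects a near-maximizing space-time point $(x_n,t_n)$, modulates accordingly, and uses that $T(f_{n,0})$ is a band-limited (hence smooth, with uniformly bounded gradient) function, so Arzel\`a--Ascoli produces a uniformly convergent subsequence with nonzero limit on a fixed cube. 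Weak $L^2$ convergence of the modulated truncations, plus boundedness of $T$, then identifies that limit as $T$ of the weak limit of the truncated sequence, which is therefore nonzero; finally, restriction to $\D$ commutes with weak limits, so the full weak limit of $g_n$ is nonzero too. In short, the paper's argument hinges on choosing $t_n$ freely and on bounding the space-time $L^\infty$ from below, neither of which your proposal provides.
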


\begin{proof}
We follow the outline of the proof of \cite[Theorem 1.1, $p\neq \infty$]{FVV11}. Setting $f_{n,0}:=f_n\one_{\D}$ we have, for $n \geq n_0$,
\begin{equation}\label{OrthogonalDec}
\eta\leq \|f_{n,0}\|_{L^2(\H^d)}\leq 1,
\ \text{ and }\ \|f_n\one_{\H^d\setminus \D}\|_{L^2(\H^d)}\leq (1-\eta^2)^{\frac 12}.
\end{equation}
Moreover, 
$$T(f_{n,0})(x,t)=\int_{\{|y|\leq r\}} e^{ix\cdot y}e^{it\langle y\rangle}f_n(y)\frac{\d y}{\langle y\rangle}$$
is a smooth function of $x,t$, satisfying 
\begin{align}
\|T(f_{n,0})\|_{L^\infty(\R^{d+1})}&\lesssim \|f_{n,0}\|_{L^2(\H^d)}\leq1,\label{LinftyUpperBound}\\
\|\nabla_{x,t} T(f_{n,0})\|_{L^\infty(\R^{d+1})}&\lesssim\|f_{n,0}\|_{L^2(\H^d)}\leq1.\label{LinftyGradUpperBound}
\end{align}

Since $\frac{2(d+2)}{d} < p$, the log-convexity of Lebesgue norms, together with the sharp inequality \eqref{ExtensionInequality} and the first upper bound in \eqref{OrthogonalDec}, yields
\begin{equation}\label{FirstLowerBound}
\|T(f_{n,0})\|_{L^p(\R^{d+1})}
\leq \|T(f_{n,0})\|_{L^{\frac{2(d+2)}d}(\R^{d+1})}^{\frac{2(d+2)}{dp}}
\|T(f_{n,0})\|_{L^\infty(\R^{d+1})}^{\frac{d(p-2)-4}{dp}}
\leq{\bf H}_{d,\frac{2(d+2)}{d}}^{\frac{2(d+2)}{dp}}\|T(f_{n,0})\|_{L^\infty(\R^{d+1})}^{\frac{d(p-2)-4}{dp}}.
\end{equation}
Since the sequence $\{f_n\}_{n\in\N}$ is extremizing and $L^2$-normalized, there exists $\delta=\delta_{d,p}>0$, depending only on $d$ and $p$, for which
$$\|Tf_n\|_{L^p(\R^{d+1})}\geq \big(\delta+(1-\eta^2)^{\frac12} \big)\,{\bf H}_{d,p}\,,$$
for every sufficiently large $n\in\N$. 
Together with the second upper bound in \eqref{OrthogonalDec}, this implies
\begin{align}
\|T(f_{n,0})\|_{L^p(\R^{d+1})}
&\geq \|Tf_n\|_{L^p(\R^{d+1})}-\|T(f_n\one_{\H^d\setminus \D})\|_{L^p(\R^{d+1})}\notag\\
&\geq \big(\delta+(1-\eta^2)^{\frac12} \big)\,{\bf H}_{d,p}-(1-\eta^2)^{\frac12}\,{\bf H}_{d,p}
=\delta\,{\bf H}_{d,p}.\label{SecondLowerBound}
\end{align}
From \eqref{FirstLowerBound} and \eqref{SecondLowerBound} we get
$$\|T(f_{n,0})\|_{L^\infty(\R^{d+1})}\geq \gamma:=\delta^{\frac {dp}{d(p-2)-4}}\,{\bf H}_{d,p}^{\frac {dp}{d(p-2)-4}}\,{\bf H}_{d,\frac{2(d+2)}{d}}^{-\frac{2(d+2)}{d(p-2)-4}}.$$
This readily implies the existence of $(x_n,t_n)\in\R^d\times\R$, for which
\begin{equation}\label{LinftyLowerBound}
|T(f_{n,0})(x_n,t_n)|\geq\frac{\gamma}{2}.
\end{equation}
Setting 
\begin{equation}\label{SeqTildeG}
\widetilde{g}_n(y):=e^{ix_n\cdot y}e^{it_n\langle y\rangle}f_{n,0}(y),
\end{equation}
we have that $\|\widetilde{g}_n\|_{L^2(\H^d)}=\|f_{n,0}\|_{L^2(\H^d)}$. 
Moreover, $T(\widetilde{g}_n)$ amounts to a space-time translation of the function $T(f_{n,0})$.
From \eqref{LinftyUpperBound}, \eqref{LinftyGradUpperBound} and \eqref{LinftyLowerBound}, it then follows that  
\begin{equation}\label{LowerUpperBoundsTg}
\|T(\widetilde{g}_n)\|_{L^\infty(\R^{d+1})}\lesssim 1,\;
\|\nabla_{x,t} T(\widetilde{g}_n)\|_{L^\infty(\R^{d+1})}\lesssim 1,\text{ and }
|T(\widetilde{g}_n)(0,0)|\geq\frac{\gamma}2.
\end{equation}
The implicit constants in the first and second estimates in \eqref{LowerUpperBoundsTg} are independent of $n$, and so  the sequence $\{T(\widetilde{g}_n)\}_{n\in\N}$ is uniformly bounded and equicontinuous on the unit cube $[-\frac12,\frac12]^{d+1}$.
The Arzel\`a--Ascoli Theorem on $\R^{d+1}$ then implies that the sequence $\{T(\widetilde{g}_n)\}_{n\in\N}$ has a subsequence which converges uniformly to a  limit. That this limit is nonzero follows at once from the third estimate in \eqref{LowerUpperBoundsTg}.

Now, since the sequence $\{\widetilde{g}_n\}_{n\in\N}$ is  bounded on $L^2(\H^d)$, it has a weakly convergent subsequence. In other words, we may thus assume, possibly after extraction, that there exists a function $\widetilde{g}\in L^2(\H^d)$, such that $\widetilde{g}_n\rightharpoonup \widetilde{g}$ weakly in $L^2(\H^d)$, as $n\to\infty$.
Since the operator $T$ is bounded from $L^2(\H^d)$ to $L^p(\R^{d+1})$, it follows that $T(\widetilde{g}_n)\rightharpoonup T(\widetilde{g})$ weakly in $L^p(\R^{d+1})$, as $n\to\infty$.
From the previous paragraph  we conclude that $T(\widetilde{g})$ is nonzero, and so the function $\widetilde{g}$ is itself nonzero.

This implies that the sequence $\{g_n\}_{n\in\N}$ defined by
$${g}_n(y):=e^{ix_n\cdot y}e^{it_n\langle y\rangle}f_{n}(y),$$
where the parameters $(x_n,t_n)$ are those from \eqref{SeqTildeG},
has a subsequence which converges weakly to a nonzero limit.
Indeed, if $g\in L^2(\H^d)$ is such that $g_n\rightharpoonup g$ weakly in $L^2(\H^d)$, as $n\to\infty$, then $g_n\one_\D\rightharpoonup g\one_\D$ weakly in $L^2(\H^d)$, as $n\to\infty$. 
Therefore, in order to prove that $g$ is nonzero, it suffices to show that it has nonzero mass inside $\D$. This follows from the fact that $\widetilde{g}$ is nonzero, which we checked in the last paragraph.
The proof of the proposition is now complete.  
\end{proof}

\begin{proposition}\label{AeConvergenceT}
Let $d\in\N$, and let $p$ be such that
  \begin{equation*}
   \begin{cases}
  6 < p< \infty, \text{ if } d=1;\\
\frac{2(d+2)}{d} < p\leq  \frac{2(d+1)}{d-1}, \text{ if } d\geq 2.
      \end{cases}  
\end{equation*}
Let $\{f_n\}_{n\in\N}\subset L^2(\H^d)$ be an extremizing sequence for inequality \eqref{ExtensionInequality}, normalized so that $\|f_n\|_{L^2(\H^d)}=1$ for each $n \in\N$, which converges weakly to a nonzero limit $f\in L^2(\H^d)$. Then, possibly after passing to a subsequence,
$$Tf_n(x,t)\to Tf(x,t), \text{ as } n\to\infty,$$
for almost every $(x,t)\in\R^d\times\R$.
\end{proposition}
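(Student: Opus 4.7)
The plan is to recast the problem via the Klein--Gordon propagator correspondence \eqref{fromextensiontoKG}--\eqref{July19_12:43am}, and then exploit local compactness in space-time through the Aubin--Lions compactness lemma.

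First, I would define $g_n \in H^{1/2}(\R^d)$ through $\widehat{g_n}(y) = f_n(y)/\langle y\rangle$, and let $g$ correspond to $f$ in the same way. A direct computation (using the Fourier normalization \eqref{NormalizeFT}) shows that $f \mapsto g$ is an isometry from $L^2(\H^d)$ onto $H^{1/2}(\R^d)$, so $f_n \rightharpoonup f$ in $L^2(\H^d)$ is equivalent to $g_n \rightharpoonup g$ in $H^{1/2}(\R^d)$. Setting $u_n(x,t) := e^{it\sqrt{1-\Delta}} g_n(x)$ and $u(x,t) := e^{it\sqrt{1-\Delta}} g(x)$, identity \eqref{July19_12:43am} reduces the claim to proving that, along a subsequence, $u_n \to u$ a.e.\ on $\R^{d+1}$.

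Next I would establish uniform space-time regularity. Unitarity of $e^{it\sqrt{1-\Delta}}$ on $H^{1/2}(\R^d)$ gives $\sup_n \|u_n\|_{L^\infty(\R;\, H^{1/2}(\R^d))} < \infty$; since $\partial_t u_n = i\sqrt{1-\Delta}\, u_n$ and $\sqrt{1-\Delta} \colon H^{1/2} \to H^{-1/2}$ is bounded, one also has $\sup_n \|\partial_t u_n\|_{L^\infty(\R;\, H^{-1/2}(\R^d))} < \infty$. Fix $T, R > 0$ and choose $\chi \in C_c^\infty(\R^d)$ with $\chi \equiv 1$ on $B_R$ and $\supp \chi \subset B_{2R}$. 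Since multiplication by a smooth compactly supported function is bounded on $H^s(\R^d)$ for every $s \in \R$, the sequence $\{\chi u_n\}$ is bounded in $L^2([-T,T]; H^{1/2}(\R^d))$, with time derivatives bounded in $L^2([-T,T]; H^{-1/2}(\R^d))$.

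Finally I would invoke the Aubin--Lions lemma in the chain $\{v \in H^{1/2}(\R^d) : \supp v \subset \overline{B_{2R}}\} \hookrightarrow L^2(\R^d) \hookrightarrow H^{-1/2}(\R^d)$, in which the first embedding is compact by Rellich. This yields relative compactness of $\{\chi u_n\}$ in $L^2([-T,T] \times \R^d)$. On the other hand, the weak convergence $g_n \rightharpoonup g$ in $H^{1/2}$ combined with the continuity of the propagator (tested against Schwartz functions in space-time) readily implies $u_n \to u$ in $\mathcal{D}'(\R^{d+1})$, so any $L^2$-limit point of $\{\chi u_n\}$ must coincide with $\chi u$. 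Hence $\chi u_n \to \chi u$ in $L^2([-T,T] \times \R^d)$ along a subsequence, yielding a.e.\ convergence on $[-T,T] \times B_R$. A diagonal extraction over $T, R \to \infty$ produces a further subsequence along which $u_n \to u$ almost everywhere on $\R^{d+1}$, as required. The main technical obstacle is the Aubin--Lions application: one must verify that the spatial cutoff preserves $H^{-1/2}$-boundedness of $\partial_t u_n$ and that the chosen functional-analytic triple truly satisfies the hypotheses; both are standard but warrant explicit justification.
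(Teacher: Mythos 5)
Your proposal is correct, but it takes a genuinely different route from the paper at the compactness step. Both arguments begin identically: pass to $g_n$, $g$ via \eqref{fromextensiontoKG}, use the identity $Tf_n = (2\pi)^d e^{it\sqrt{1-\Delta}} g_n$ to reduce to a.e.\ convergence of the propagated solutions, and note that $g_n \rightharpoonup g$ in $H^{1/2}(\R^d)$ with $\|g_n\|_{H^{1/2}}=1$. From there you control $\partial_t u_n = i\sqrt{1-\Delta}\,u_n$ in $L^\infty_t H^{-1/2}_x$, localize in space with a cutoff $\chi$, and invoke the Aubin--Lions--Simon lemma on the triple $\{v\in H^{1/2}: \supp v\subset \overline{B_{2R}}\}\hookrightarrow L^2\hookrightarrow H^{-1/2}$ to obtain relative compactness of $\{\chi u_n\}$ in $L^2([-T,T]\times\R^d)$; the limit is identified via distributional convergence. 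The paper instead proceeds pointwise in time: for each fixed $t$, unitarity of $e^{it\sqrt{1-\Delta}}$ on $H^{1/2}$ turns the weak convergence $g_n\rightharpoonup g$ into $e^{it\sqrt{1-\Delta}}g_n\rightharpoonup e^{it\sqrt{1-\Delta}}g$ weakly in $H^{1/2}(B_R)$, and then Rellich's theorem gives strong $L^2(B_R)$ convergence; setting $F_n(t,R):=\|e^{it\sqrt{1-\Delta}}(g_n-g)\|_{L^2(B_R)}^2$, the uniform bound $F_n(t,R)\lesssim 1$ lets Lebesgue's Dominated Convergence Theorem push this to $\int_{-R}^R F_n(t,R)\,\d t\to 0$, whence a.e.\ convergence on $B_R\times(-R,R)$ after extraction, followed by the same diagonal argument over $R_j\to\infty$. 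The upshot: the paper's argument avoids any estimate on $\partial_t u_n$ and any appeal to Aubin--Lions, trading those for a slicewise Rellich plus an integration in $t$; your version is a more ``PDE-standard'' space-time compactness argument and is slightly heavier machinery, but it is complete and correct provided you verify (as you flag) that $\chi$ acts boundedly on $H^{\pm 1/2}$ and that the Simon-type hypotheses of the lemma are satisfied.
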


\begin{proof}
We follow the outline of the proof of \cite[Theorem 1.1]{FVV12}. For each $n\in\N$, define the auxiliary functions
$$\widehat{g}_n(y):=\frac{f_n(y)}{\langle y\rangle},\text{ and also }
\widehat{g}(y):=\frac{f(y)}{\langle y\rangle}.$$
As it has been pointed out in \eqref{July19_12:43am}, the extension operator on the hyperboloid and the Klein--Gordon propagator are related by
$$Tf_n(x,t)=(2\pi)^{d} \,e^{it\sqrt{1-\Delta}} g_n(x),$$
and it suffices to show that,  pointwise for almost every  $(x,t)\in\R^d\times\R$,
$$e^{it\sqrt{1-\Delta}} g_n(x)\to e^{it\sqrt{1-\Delta}} g(x), \text{ as }n\to\infty,$$
possibly after extraction of a subsequence. For $t\in\R$ and $R>0$, we define
$$F_n(t,R):=\int_{\{|x|\leq R\}} \big|e^{it\sqrt{1-\Delta}}(g_n-g)(x)\big|^2 \d x,$$
and we claim that
\begin{equation}\label{RellichClaim}
\lim_{n\to\infty} F_{n}(t,R)=0
\end{equation}

In order to prove this claim, first recall that $\{g_n\}_{n\in\N}$ is bounded on the Sobolev space $H^{\frac12}(\R^d)$, with
\begin{equation}\label{BoundednessH12}
\|g_n\|_{H^{\frac12}(\R^d)}=\|f_n\|_{L^2(\H^d)}=1.
\end{equation}
Let $B_R\subset\R^d$ denote the ball centered at the origin of radius $R$. The hypothesis $f_n\rightharpoonup f$ weakly in $L^2(\H^d)$ can be equivalently restated as $g_n\rightharpoonup g$ weakly in $H^{\frac12}(\R^d)$. Since, for fixed $t\in\R$, the operator $e^{it\sqrt{1-\Delta}}$ is unitary on $H^{\frac12}(\R^d)$, it follows that $e^{it\sqrt{1-\Delta}} g_n\rightharpoonup e^{it\sqrt{1-\Delta}} g$ weakly in $H^{\frac12}(\R^d)$, which in turn implies that $e^{it\sqrt{1-\Delta}} g_n\rightharpoonup e^{it\sqrt{1-\Delta}} g$ weakly in $H^{\frac12}(B_R)$. As a consequence of \eqref{BoundednessH12} and of Rellich's Theorem, see e.g. \cite[Theorem 7.1 and Proposition 3.4]{DPV12}, we find
$$e^{it\sqrt{1-\Delta}} g_{n}\to e^{it\sqrt{1-\Delta}} g \text{ strongly in }L^2(B_R), \text{ as }n\to\infty.$$
In other words, \eqref{RellichClaim} holds as claimed. We further note, since the operator $e^{it\sqrt{1-\Delta}}$ is unitary on $L^2(\R^d)$, that
$$F_n(t,R)\leq \big\|e^{it\sqrt{1-\Delta}}(g_n-g)\big\|_{L^2(\R^d)}^2=\|g_n-g\|_{L^2(\R^d)}^2\ \lesssim\ \|g_n-g\|_{H^{\frac12}(\R^d)}^2\lesssim 1.$$

This justifies the applicability of  Lebesgue's Dominated Convergence Theorem which, together with \eqref{RellichClaim}, implies
$$\lim_{n\to\infty}\int_{-R}^R F_n(t,R) \,\d t =
\lim_{n\to\infty}\int_{-R}^R  \int_{\{|x|\leq R\}} \big|e^{it\sqrt{1-\Delta}}(g_n-g)(x)\big|^2 \,\d x\,\d t
=0.$$
As a consequence, up to a subsequence,
$$e^{it\sqrt{1-\Delta}}(g_n-g)(x)\to 0, \text{ as }n\to\infty, \text{ for a.e. } (x,t)\in B_R\times(-R,R).$$
The extraction of the subsequence depends on the radius $R$. 
To remedy this, repeat the argument on a discrete sequence of radii $\{R_j\}_{j\in\N}$ satisfying $R_j\to\infty$, as $j\to\infty$, to conclude, via a standard diagonal argument, that there exists a subsequence $\{g_{n_k}\}_{k\in\N}\subset\{g_n\}_{n\in\N}$ such that
$$e^{it\sqrt{1-\Delta}}(g_{n_k}-g)(x)\to 0, \text{ as }k\to\infty, \text{ for a.e. } (x,t)\in \R^d\times\R.$$
This concludes the proof of the proposition.
\end{proof}

It is now an easy matter to finish the proof of Theorem \ref{Thm2}.
\begin{proof}[Proof of Theorem \ref{Thm2}]
Let us start by considering the case $d=1$ and $p\in(6,\infty)$.
The strategy is to invoke Lemma \ref{FVVLemma} with $S=T$ and $\mathcal{H}=L^2(\H^1)$.
With that purpose in mind, let $\{f_n\}_{n\in\N}$ be an extremizing sequence for the inequality
\begin{equation}\label{Sharp1DGenp}
\|Tf\|_{L^p(\R^{2})}\leq {\bf H}_{1,p} \|f\|_{L^2(\H^1)},
\end{equation}
normalized so that $\|f_n\|_{L^2(\H^1)}=1$ for each $n \in \N$. In particular, conditions (i) and (ii) from Lemma \ref{FVVLemma} are automatically met.
We will be done once we check that conditions (iii) and (iv) hold as well. By Proposition \ref{SpecialCap1D}, the sequence $\{(L^{s_n})^*f_n\}_{n\in\N}$, which is still extremizing for \eqref{Sharp1DGenp}, verifies
$$\|(L^{s_n})^*f_n\|_{L^2(\C_0)}\geq \eta_{1,p}>0,$$
for every $n\in\N$.
By Proposition \ref{NonZeroWeakLimit}, the sequence $\{g_n\}_{n\in\N}$ defined by 
$$g_n(y)=e^{ix_n y}e^{it_n\langle y\rangle}((L^{s_n})^*f_n)(y),$$
which is still extremizing for \eqref{Sharp1DGenp}, is such that 
$$g_n\rightharpoonup g\neq 0 \text{ weakly in }L^2(\H^1),\text{ as }n\to\infty,$$
possibly after passing to a subsequence. By Proposition \ref{AeConvergenceT}, we then know that
$$T(g_n)\to T(g)\text{ pointwise a.e. on $\R^2$, as }n\to\infty,$$
again possibly after passing to a subsequence. By Lemma \ref{FVVLemma}, we finally conclude that $g_n\to g$ in $L^2(\H^1)$, as $n\to\infty$.
In other words, $g$ is an extremizer for inequality \eqref{Sharp1DGenp}.
This concludes the proof of the one-dimensional case.
The two-dimensional case $d=2$ and $p\in (4,6)$ can be handled in an analogous way. One just invokes
Proposition \ref{SpecialCap2D} instead of Proposition \ref{SpecialCap1D}, the rest of the argument being identical.
This concludes the proof.
\end{proof}

\section*{Acknowledgements}

We thank Ren\'e Quilodr\'an, Betsy Stovall and Christoph Thiele for stimulating discussions. This work was accomplished during visits to the Hausdorff Research Institute for Mathematics (Bonn), the International Centre for Theoretical Physics (Trieste) and Stanford University, whose hospitality is greatly appreciated. E.C. acknowledges support from CNPq-Brazil, FAPERJ-Brazil and the Fulbright Junior Faculty Award. D.O.S. was partially supported by the Hausdorff Center for Mathematics and DFG grant CRC 1060. M.S. acknowledges support from FAPERJ-Brazil.


\begin{thebibliography}{99}

\bibitem{AS70}
\textsc{M. Abramowitz and I. A. Stegun}, 
\newblock Handbook of mathematical functions with formulas, graphs, and mathematical tables,
\newblock Dover Publications, 1970.

\bibitem{BBCH09}
\textsc{J. Bennett, N. Bez, A. Carbery and D. Hundertmark},
\newblock {\it Heat-flow monotonicity of Strichartz norms},
\newblock Anal. PDE {\bf 2} (2009), no.~2, 147--158.

\bibitem{BBI15}
\textsc{J. Bennett, N. Bez and M. Iliopoulou}, 
\newblock {\it Flow monotonicity and Strichartz inequalities},
\newblock Int. Math. Res. Not. IMRN 2015, no.~19, 9415--9437. 

\bibitem{BBJP14}
\textsc{J. Bennett, N. Bez, C. Jeavons and N. Pattakos}, 
\newblock {\it On sharp bilinear Strichartz estimates of Ozawa--Tsutsumi type},
\newblock J. Math. Soc. Japan {\bf 69} (2017), no. 2, 459--476.

\bibitem{BJO16}
\textsc{N. Bez, C. Jeavons and T. Ozawa}, 
\newblock {\it Some sharp bilinear space-time estimates for the wave equation},
\newblock Mathematika {\bf 62} (2016), no.~3, 719--737. 

\bibitem{BR13}
\textsc{N. Bez and K. Rogers},
\newblock {\it A sharp Strichartz estimate for the wave equation with data in the energy space},
\newblock J. Eur. Math. Soc. (JEMS) {\bf 15} (2013), no.~3, 805--823.

\bibitem{Bu10}
\textsc{A. Bulut},
\newblock {\it Maximizers for the Strichartz inequalities for the wave equation},
\newblock Differential Integral Equations {\bf 23} (2010), no.~11--12, 1035--1072.


\bibitem{Ca17}
\textsc{T. Candy},
\newblock Multi-scale bilinear restriction estimates for general phases,
\newblock Preprint, 2017. arXiv:1707.08944.


\bibitem{CS72}
\textsc{L. Carleson and P. Sj\"olin}, 
\newblock {\it Oscillatory integrals and a multiplier problem for the disc},
\newblock Studia Math. {\bf 44} (1972), 287--299.

\bibitem{Ca09}
\textsc{E. Carneiro},
\newblock {\it A sharp inequality for the Strichartz norm},
\newblock  Int. Math. Res. Not. IMRN (2009), no.~16, 3127--3145.

\bibitem{CFOST15}
\textsc{E. Carneiro, D. Foschi, D. Oliveira e Silva and C. Thiele},
\newblock {\it A sharp trilinear inequality related to Fourier restriction on the circle},
\newblock Preprint, 2015. arXiv:1509.06674. To appear in  Rev. Mat. Iberoam.

\bibitem{COS15}
\textsc{E. Carneiro and D. Oliveira e Silva},
\newblock {\it Some sharp restriction inequalities on the sphere},
\newblock Int. Math. Res. Not. IMRN (2015), no.~17, 8233--8267.

\bibitem{CQ14} 
\textsc{M. Christ and R. Quilodr\'{a}n},
\newblock {\it Gaussians rarely extremize adjoint Fourier restriction inequalities for paraboloids},
\newblock Proc. Amer. Math. Soc. {\bf 142} (2014), no.~3, 887--896.

\bibitem{CS12a} 
\textsc{M. Christ and S. Shao},
\newblock {\it Existence of extremals for a Fourier restriction inequality},
\newblock Anal. PDE. {\bf 5} (2012), no.~2, 261--312.

\bibitem{CS12b} 
\textsc{M. Christ and S. Shao},
\newblock {\it On the extremizers of an adjoint Fourier restriction inequality},
\newblock Adv. Math. {\bf 230} (2012), no.~3, 957--977.

\bibitem{DPV12}
\textsc{E. Di Nezza, G. Palatucci and E. Valdinoci},
\newblock {\it Hitchhiker's guide to the fractional Sobolev spaces},
\newblock Bull. Sci. Math. {\bf 136} (2012), no.~5, 521--573. 

\bibitem{FVV11}
\textsc{L. Fanelli, L. Vega and N. Visciglia}, 
\newblock {\it On the existence of maximizers for a family of restriction theorems},
\newblock Bull. Lond. Math. Soc. {\bf 43} (2011), no.~4, 811--817.

\bibitem{FVV12}
\textsc{L. Fanelli, L. Vega and N. Visciglia}, 
\newblock {\it Existence of maximizers for Sobolev--Strichartz inequalities},
\newblock Adv. Math. {\bf 229} (2012), no.~3, 1912--1923.

\bibitem{Fo07}
\textsc{D. Foschi},
\newblock {\it Maximizers for the Strichartz inequality}, 
\newblock  J. Eur. Math. Soc. (JEMS) {\bf 9} (2007), no.~4, 739--774.

\bibitem{Fo15}
\textsc{D. Foschi},
\newblock {\it Global maximizers for the sphere adjoint Fourier restriction inequality},
\newblock J. Funct. Anal. {\bf 268} (2015), 690--702.

\bibitem{FOS17} 
\textsc{D. Foschi and D. Oliveira e Silva}, 
\newblock {\it Some recent progress in sharp Fourier restriction theory},
\newblock Preprint, 2017. arXiv:1701.06895.
 To appear in Analysis Math.

\bibitem{FLS16} 
\textsc{R. Frank, E. H. Lieb and J. Sabin},
\newblock {\it Maximizers for the Stein--Tomas inequality},
\newblock Geom. Funct. Anal. {\bf 26} (2016), no.~4, 1095--1134. 

\bibitem{Go17} 
\textsc{F. Gon\c{c}alves},
\newblock {\it Orthogonal polynomials and sharp estimates for the Schr\"odinger equation},
\newblock Preprint, 2017. arXiv:1702.08510.

\bibitem{HS12}
\textsc{D. Hundertmark and S. Shao}, 
\newblock {\it Analyticity of extremizers to the Airy--Strichartz inequality},
\newblock Bull. Lond. Math. Soc. {\bf 44} (2012), no.~2, 336--352.

\bibitem{HZ06}
\textsc{D. Hundertmark and V. Zharnitsky}, 
\newblock {\it On sharp Strichartz inequalities in low dimensions}, 
\newblock Int. Math. Res. Not. IMRN (2006), Art. ID 34080, 1--18.

\bibitem{J14}\textsc{C. Jeavons},
\emph{\it A sharp bilinear estimate for the Klein--Gordon equation in arbitrary space-time dimensions},
Differential Integral Equations \textbf{27} (2014), no. 1-2, 137--156. 

\bibitem{JSS14} 
\textsc{J.-C. Jiang, S. Shao and B. Stovall},
\newblock {\it Linear profile decompositions for a family of fourth order Schr\"odinger equations},
\newblock Preprint, 2014. 	arXiv:1410.7520.

\bibitem{KSV12}
\textsc{R. Killip, B. Stovall and M. Visan},
\newblock {\it Scattering for the cubic Klein--Gordon equation in two space dimensions},
\newblock Trans. Amer. Math. Soc. {\bf 364} (2012), no.~3, 1571--1631. 

\bibitem{Ku03}
\textsc{M. Kunze}, 
\newblock {\it On the existence of a maximizer for the Strichartz inequality},
\newblock Comm. Math. Phys. {\bf 243} (2003), no.~1, 137--162. 

\bibitem{OS14}
\textsc{D. Oliveira e Silva},
\newblock {\it Extremals for Fourier restriction inequalities: convex arcs},
\newblock J. Anal. Math. {\bf 124} (2014), 337--385.

\bibitem{OS12} 
\textsc{D. Oliveira e Silva}, 
\newblock {\it Nonexistence of extremizers for certain convex curves},
\newblock Preprint, 2012. arXiv:1210.0585. 
To appear in Math. Res. Lett. 

\bibitem{OSQ16}
\textsc{D. Oliveira e Silva and R. Quilodr\'{a}n}, 
\newblock {\it On extremizers for Strichartz estimates for higher order Schr\"odinger equations},
\newblock Preprint, 2016. arXiv:1606.02623. To appear in Trans. Amer. Math. Soc.

\bibitem{OR14} \textsc{T. Ozawa and K. Rogers},
\emph{\it A sharp bilinear estimate for the Klein--Gordon equation in $\R^{1+1}$},
Int. Math. Res. Not. IMRN (2014), no. 5, 1367--1378.


\bibitem{Qu13}
\textsc{R. Quilodr\'{a}n}, 
\newblock {\it On extremizing sequences for the adjoint restriction inequality on the cone},
\newblock J. Lond. Math. Soc. (2) {\bf 87} (2013), no.~1, 223--246.

\bibitem{Qu15} 
\textsc{R. Quilodr\'an}, 
\newblock {\it Nonexistence of extremals for the adjoint restriction inequality on the hyperboloid}, 
\newblock J. Anal. Math. {\bf 125} (2015), 37--70. 

\bibitem{Ra12}
\textsc{J. Ramos}, 
\newblock {\it A refinement of the Strichartz inequality for the wave equation with applications},
\newblock Adv. Math. {\bf 230} (2012), no.~2, 649--698. 

\bibitem{Sh09a}
\textsc{S. Shao},
\newblock {\it Maximizers for the Strichartz and the Sobolev--Strichartz inequalities for the Schr\"{o}dinger equation},
\newblock Electron. J. Differential Equations (2009), No. 3, 13 pp.

\bibitem{Sh09b}
\textsc{S. Shao},
\newblock {\it The linear profile decomposition for the Airy equation and the existence of maximizers for the Airy--Strichartz inequality},
\newblock Anal. PDE {\bf 2} (2009), no.~1, 83--117.

\bibitem{Sh16}
\textsc{S. Shao},
\newblock {\it On existence of extremizers for the Tomas--Stein inequality for $\mathbb{S}^1$},
\newblock J. Funct. Anal. {\bf 270} (2016), 3996--4038.

\bibitem{St77} 
\textsc{R. S. Strichartz}, 
\newblock {\it Restrictions of Fourier transforms to quadratic surfaces and decay of solutions of wave equations}, 
\newblock Duke Math. J. {\bf 44} (1977), no.~3, 705--714. 

\bibitem{Wa66}
\textsc{G. N. Watson}, 
\newblock A Treatise on the Theory of Bessel Functions,
\newblock Second Edition. Cambridge University Press, Cambridge, 1966.



\end{thebibliography}
\end{document}